\newtheorem{theorem}{Theorem}
\newtheorem{conclusion}[theorem]{Conclusion}
\newtheorem{condition}[theorem]{Condition}
\newtheorem{corollary}[theorem]{Corollary}
\newtheorem{definition}[theorem]{Definition}
\newtheorem{example}[theorem]{Example}
\newtheorem{lemma}[theorem]{Lemma}
\newtheorem{problem}[theorem]{Problem}
\newtheorem{proposition}[theorem]{Proposition}
\newtheorem{remark}[theorem]{Remark}
\newenvironment{proof}[1][Proof]{\noindent\textbf{#1.} }{\ \rule{0.5em}{0.5em}}
\begin{document}

\title{From\ Subgroups of Direct Sums to Virtuality}
\author{by Stephanos Gekas \\
Aristotle University of Thessaloniki, School of Mathematics}
\date{July 3, 2017}
\maketitle

\begin{abstract}
We start by an original investigation on subgroups of (even infinite) direct
sums in the first 4 sections, that largely generalizes Remak's known
theorem; inspired by that general picture we have elsewhere extended this
elementary "virtual" diagrammatic situation (in diagrammatic length 2
meaning set-theoretic fixation of vertices) by generalizing to the notion of
"virtuality" in module extensions and diagrams in modular representation
theory.

Our first approach starts with an appropriately defined equivalence
relation, which is precisely what allows for treating the confusing case of
multiple factors, thus giving a deeper insight into the structure of such
subgroups.

Several applications and new techniques arising from that approach are
examined, even ones concerning basic properties of homomorphisms, extending
well-known elementary ones.
\end{abstract}

\section{\textbf{Introduction} \ }

\ \ \ \textit{\ }

Although the first part of this article concerns some basic group theory,
that is justified not only in view of the numerous applications that follow
but also by the fact that there are, astonishingly, still many obscurities
about the subgroups of a direct product of $n$ groups, for $n>2$, already in
its general outset. There is, on the other hand, an increasing tendency to
look at subdirect products in more specific contexts and instances, as one
may for example see in \cite{BR}.

I got my first motivation to consider this kind of questions while working
in modular representation theory and trying to understand the subtle inner
structure of modules in a "virtual" framework, by which I was then lead to
analogue but in some sense more general group-theoretic considerations. The
analogue lines can only be drawn by depicting results through "fan-like"
diagrams, which is also a fundamental kind of problem one encounters in the
effort to attach diagrams to modules in an optimal way, so that that they
are somehow analogue to those diagrammatic depictions of subdirect products
regarding that particular feature of the latter, that their vertices are
also set-theoretically fixed. This last feature being impossible for
diagrammatic length (=Loewy length, speaking now of modules) greater than 2,
we may achieve the best possible analogue in the frame of the "Virtual
Category" (see \cite{StG1}\ and \cite{StG}). This bridge to more general
diagrammatic methods (in Representation Theory) is stressed here with
remarks \ref{diagR} and \ref{gdiaG} and finally in the last small section 6.

In particular, by combining the approach and our results here with
"subdirect presentations through homomorphisms" we are lead by an original
way to both known and unknown facts about homomorphisms in general (section
5). It should also be pointed out that our results are generalizable to
operator groups (/operator subgroups) by properly extending/specializing the
proofs.

We are giving an outline of our approach:

The introduction of an equivalence relation in a subgroup $U\leq
A=A_{1}\times A_{2}\times ...\times A_{n}$ is critical for our insight into
its structure, although we can \textit{a posteriori} also determine a normal
subgroup $I$ of $U$, of which the cosets are actually the classes ("adhesive
fibres") of our relation; this is the key to our approach, leading in
particular to our theorems \ref{ssgTh}\ and \ref{smTh}, which generalize
Remak's theorem about the structure (through some "structural" isomorphisms,
intrinsic to the subgroup inclusion of $U$ in $A$) of subgroups of the
direct product of two groups to the case of any arbitrary (even infinite)
number of factors. Namely, in theorem \ref{ssgTh}\ we show an analogue of
that, for any choice of a subset of the set of direct factors, while in
theorem \ref{smTh}\ we determine a necessary and sufficient condition for $U$
to have an analogue $n$-fold structure, i.e. at all places, as for the case $%
n=2$. Finally, the optimal generalization of theorem \ref{smTh} is done with
theorem \ref{gTh}. In all these cases we proceed by means of the
specifically important structure of that normal subgroup $I$, called \textbf{%
the core} of the particular inclusion of $U$ as a subgroup of $A_{1}\times
A_{2}\times ...\times A_{n}$; we are lead to that subgroup and its relevant
to the subdirect product analysis by the key role of its generating
subgroups $E_{i}$ to our equivalence relation, as elucidated in the proofs
of proposition \ref{pr3} and lemma \ref{eqR}. Very critical for our most
general case, treated in theorem \ref{gTh}, is the notion of cohesive
components of the core and its related ("cohesion") decomposition as their
product (propositions \ref{cohD}, \ref{40}).

It is also important to stress that our results may also be applied to the
case of a direct sum of any countable sequence of groups, see remarks \ref%
{dSum}, \ref{dSum2} and \ref{dSum3}.

There are two reasons for us to begin with the case $n=2$, although at least
the final results are well known in this case: (a) It has been precisely
this method, that has lead our intuition to the generalization for any n,
(b) There is also another, methodological reason: Our proof on the one hand
does not depend on the fundamental theorem for group homomorphisms, on the
other this proof, along with the overall point of view of our method, leads
us to the aforementioned results on homomorphisms, which also represent
hitherto unknown generalizations of the fundamental homomorphism theorem, a
theorem which is then obtained as a very special case (as corollary \ref%
{fhth}).

In the following subsection 4.2 we investigate the action of a kind of
"projectivization" of $\left( AutR\right) ^{m}$ on subdirect products over $%
R $, which, apart from defining some orbits in a family of subdirect
products, also gives us a whole orbit once we have one of them. In
subsection 4.3 we investigate conditions for a group $G$\ to be expressible
as a subdirect product of groups belonging to any\ class $\mathfrak{E}$ of
groups, while we also specialize in some particular classes of special
interest.

Another approach proceeds in section 5, by presenting a subdirect product
through "diagonalizing" homomorphisms emanating from a single group; by
that\ we gain an independent, quite different view of them - but then also
some basic general results about homomorphisms, by applying the conclusions
of the previous sections on such presentations of subdirect products,
results considerably extending classical/elementary ones (see proposition %
\ref{2epimTh} and its corollaries). The possibilities of the techniques
arising from this approach are not exhausted here, they are just opened.

The achievements and insights in this work, with the intense focus on the
level of subsets and their elements, somehow pave the way toward a virtual
approach to module extension and diagram theory, the fundaments of which are
laid in \cite{StG1} and in \cite{StG}.

\bigskip \textit{Key words: Presentations of subdirect products,
decomposition as a subdirect product, adhesive fibres, subdirect }$AutR$%
\textit{-classes, Subdirect (in)decomposability, Subdirect presentations.}

\bigskip \textit{About notation:}

We shall automatically consider elements of subgroups of any subproduct of
the original one as elements of the latter too - and vice versa, according
to the context. The phrygic hat $\widehat{}$ above an index designates
omitting, as usual. Whenever we have some "basic" set $\Omega $\ and a
subset $M\subset \Omega $, $\widehat{M}$\ shall denote its complement in $%
\Omega $, so as to have a partition $\Omega =M\sqcup \widehat{M}$. $1$ may
both denote the identity element or the trivial (sub)group. $\pi $ with the
proper indices shall designate projections from direct products. The symbol "%
$\times $" may denote not only outer, but also inner direct product, which
in all cases should be clear from the context. Direct products are denoted
either as $Dr\tprod\limits_{i=1}^{m}W_{i}$\ (like in \cite{JR}, f.ex.) or
just $\tprod\limits_{i=1}^{m}W_{i}$, while in case of infinite factors we
are only using direct sums here.\ 

\bigskip

\section{\textbf{The case of two direct factors}}

The first theorem here is a well known one; the reason to repeat it here is,
as already mentioned, that we are getting to it through a totally different,
original approach, that is then also applicable to the general case of more
than two factors, yielding results and insights which, to my knowledge, are
new. Take notice of the fact that no use of the fundamental theorem for
group homomorphisms is made in its proof; it is thus meaningful to get
another proof of the latter based on it, in fact as a special case of
something much more general (section 5).

\begin{theorem}
(\cite{RR2}; can be found for example in \cite{JT}, also in \cite{JR} 8.19,
p. 183 - or in \cite{BH},\cite{DR},\cite{WS})\label{Th1} Given the direct
product $A\times B$ of two groups and a subgroup $U\leq A\times B$ , \ there
exists a unique isomorphism $\sigma :\pi _{A}\left( U\right) \diagup U\cap
A\rightarrow \pi _{B}\left( U\right) \diagup U\cap B$ , which is thus
\textquotedblleft structural\textquotedblleft , as it determines the
discrete "pair cosets" of which $U$ consists; it is also natural, in the
sense that, for a given subdirect product $U$ of $A\times B$ and a
homomorphism $(f_{A}$, $f_{B})$ from it to $A%
{\acute{}}%
\times B%
{\acute{}}%
$, sending $U$ to $U%
{\acute{}}%
$ and inducing $\sigma ^{\prime }$ from $\sigma $, this $\sigma ^{\prime }$
is precisely the \textquotedblleft structural isomorphism\textquotedblleft\
of $U%
{\acute{}}%
$, as above. Furthermore, the isomorphism $\sigma $ can be (naturally)
continued to $R:=U\diagup \left( U\cap A\right) \times \left( U\cap B\right) 
$. \ \ Conversely, given a normal subgroup $K$ of a subgroup $H_{A}$ of $A$,
respectively, a normal subgroup $L$ of a subgroup $H_{B}$ of $B$ (i.e. $%
K\trianglelefteq H_{A}\leq A,L\trianglelefteq H_{B}\leq B$) and an
isomorphism $\sigma :H_{A}\diagup K\rightarrow H_{B}\diagup L$ , a subgroup $%
U\leq A\times B$ \ is uniquely determined as consisting of the $\sigma $%
-determined pair-cosets. This amounts, of course, to realizing $U$ as a
fiber product of $\pi _{A}\left( U\right) $ and\ $\pi _{B}\left( U\right) $,
over a fixed isomorphic copy $T$ of the two sides of $\sigma ,$ with respect
to $\pi _{X}\left( U\right) $ 's ($X=A,B$) epimorphisms on it, that must be
so coordinated, as to induce precisely that isomorphism $\sigma $ (as
above), that determines $U$ 's pair-fibres correctly.\ \ \ \ \ 
\end{theorem}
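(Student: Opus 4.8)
The plan is to build $\sigma$ directly from the pair structure of $U$ and then read every assertion off that one construction. First I would record the two facts that make the statement sensible: writing $U\cap A$ for $U\cap(A\times 1)=\{a:(a,1)\in U\}$ and similarly for $B$, one has $U\cap A\trianglelefteq\pi_A(U)$ and $U\cap B\trianglelefteq\pi_B(U)$, since conjugating $(a,1)\in U$ by any $(a',b')\in U$ keeps the second coordinate trivial. I would then define $\sigma$ by $\sigma(a(U\cap A))=b(U\cap B)$ whenever $(a,b)\in U$; the heart of the forward direction is that this is well defined. If $(a,b),(a,b')\in U$ then $(1,b^{-1}b')\in U$, so $b(U\cap B)=b'(U\cap B)$; and if $a^{-1}a'\in U\cap A$ with $(a,b),(a',b')\in U$, then multiplying $(a^{-1}a',1)\in U$ into $(a^{-1}a',b^{-1}b')\in U$ forces $b^{-1}b'\in U\cap B$. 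That $\sigma$ is a homomorphism is immediate from componentwise multiplication in $U$; surjectivity follows from the definition of $\pi_B(U)$, and injectivity from the symmetric form of the well-definedness argument. This establishes existence.

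For the structural description and uniqueness I would note that the same computation yields $U=\{(a,b)\in\pi_A(U)\times\pi_B(U):\sigma(a(U\cap A))=b(U\cap B)\}$ exactly; since this recovers $U$ from $\sigma$, and since conversely any isomorphism with the stated matching property must send $a(U\cap A)$ to the unique coset $b(U\cap B)$ with $(a,b)\in U$, the structural $\sigma$ is forced, hence unique. The continuation to $T$ comes for free from the same bookkeeping: the map $U\to\pi_A(U)/(U\cap A)$, $(a,b)\mapsto a(U\cap A)$, has kernel exactly $(U\cap A)\times(U\cap B)$ (if $a\in U\cap A$ then $(a,1)\in U$ forces $(1,b)\in U$), so it induces an isomorphism $T\cong\pi_A(U)/(U\cap A)$; symmetrically $T\cong\pi_B(U)/(U\cap B)$, and $\sigma$ is literally the composite of these two, which is the precise sense in which it extends to $T$.

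For naturality, with $U$ subdirect and $(f_A,f_B)$ carrying $U$ onto $U'$, I would check $f_A(U\cap A)\subseteq U'\cap A'$ and that $f_A$ maps $\pi_A(U)$ onto $\pi_{A'}(U')$, so $f_A$ descends to $\bar f_A$ on the quotients, and symmetrically $\bar f_B$; then I would verify that the induced $\sigma'$ has the matching property characterizing $U'$, namely $(a',b')\in U'\Rightarrow\sigma'(a'(U'\cap A'))=b'(U'\cap B')$, simply by chasing $(a,b)\in U$ to $(f_A(a),f_B(b))\in U'$, and conclude by the uniqueness just proved that $\sigma'$ is the structural isomorphism of $U'$. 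I expect this to be the main obstacle, not because any single step is deep but because one must keep $\sigma'$ intrinsic to $U'$ and only afterwards recognize the chased diagram as that intrinsic map; the tempting shortcut of defining $\sigma'$ by the diagram leaves nothing to prove and misses the content.

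Finally, for the converse I would set $U:=\{(a,b)\in I\times J:\sigma(aK)=bL\}$, check it is a subgroup (from $\sigma$ being a homomorphism), and compute $U\cap A=K$, $U\cap B=L$, $\pi_A(U)=I$, $\pi_B(U)=J$, together with the fact that the structural isomorphism of this $U$ is the given $\sigma$. Uniqueness of $U$ then follows, since any subgroup with these invariants satisfies the same defining equation by the forward direction. I would close by observing that this $U$ is exactly the pullback of $I\to I/K$ and $J\to J/L$ along the isomorphism $\sigma$, so that $T$ is the amalgamating quotient and the two epimorphisms $\pi_X(U)\to T$ are coordinated precisely so as to induce $\sigma$, which is the fiber-product reading asserted.
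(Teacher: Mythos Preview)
Your argument is correct and is essentially the classical direct construction of Goursat's lemma: define $\sigma$ on representatives, check well-definedness by cancelling in the other coordinate, and read everything else off. The paper, however, deliberately takes a different route. It introduces an equivalence relation $\sim$ on $U$ generated by ``share a coordinate'', proves a multiplicativity property \textbf{(m)} for $\sim$, identifies the class of $(1,1)$ with $(U\cap A)\times(U\cap B)$ via an adjacency-sequence argument, and then obtains $\sigma$ from the fact that the $\sim$-classes are exactly the cosets $a(U\cap A)\times b(U\cap B)$.

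Two points of comparison are worth making. First, the equivalence-relation machinery is not incidental: it is precisely what the paper generalises to $n>2$ factors in Propositions~3 and~4 and Theorems~5, 20 and~29, where your direct coset argument has no obvious analogue (there is no single ``other coordinate'' to cancel in). So your proof, while shorter here, does not open the door the paper wants opened. Second, the paper stresses that its proof makes no use of the first isomorphism theorem; your passage to $T$ via ``the map $U\to\pi_A(U)/(U\cap A)$ has kernel $(U\cap A)\times(U\cap B)$, so it induces an isomorphism'' is exactly that theorem. In the paper's internal logic this matters, because Corollary~31 \emph{derives} the first isomorphism theorem from the present result, so your shortcut would be circular there. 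For a self-contained treatment of the $n=2$ case your approach is the efficient one; for the paper's programme the equivalence relation is the point.
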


\begin{proof}
We define a relation "$\thicksim $" on $U\leq \pi _{A}\left( U\right) \times
\pi _{B}\left( U\right) $ \ by stipulating first that "adjacent" pairs are
related, i.e. $\left( a,b\right) \sim \left( a^{\prime },b\right) \bigskip $%
, $\left( a,b\right) \sim \left( a,b^{\prime }\right) $ for any $\left(
a,b\right) ,\left( a^{\prime },b\right) ,\left( a,b^{\prime }\right) \in U$
and then taking as "$\thicksim $" the transitive hull of this first
stipulation. Reflectivity and symmetricity being apparent, it is clear that "%
$\thicksim $" \ is an equivalence relation\ on $U$\ . \ A crucial property
of this relation is that,\hspace{0in} \hspace{0in}\textbf{(m)}\ if \ $\left(
a_{1},b_{1}\right) \sim \left( a_{1}^{\prime },b_{1}^{\prime }\right) $\ \ \
and\ $\left( a_{2},b_{2}\right) \sim \left( a_{2}^{\prime },b_{2}^{\prime
}\right) $, then \ $\left( a_{1}a_{2},b_{1}b_{2}\right) \sim \left(
a_{1}^{\prime }a_{2}^{\prime },b_{1}^{\prime }b_{2}^{\prime }\right) $\ ;
the relationship between two pairs means the existence of a finite sequence
of pairs starting with the first and ending with the second of those two,
such that any two subsequent pairs have the same first or second coordinate
(we will call such a sequence an adjacency sequence). To see our claim,
consider such sequences for the two given relationships and make them of
equal length by repeating the last term of the shortest one as many times as
necessary\textbf{; }we shall\textbf{\ }have to produce\textbf{\ }a sequence
with these two as terminal members, consisting of subsequent "adjacent"
terms, i.e. ones sharing the same coordinate. To achieve this, take the
products of the terms of same order in those two sequences of equal length,
to get first a sequence of the same length, whose first and last terms are,
respectively, \ $\left( a_{1}a_{2},b_{1}b_{2}\right) $ and $\left(
a_{1}^{\prime }a_{2}^{\prime },b_{1}^{\prime }b_{2}^{\prime }\right) $.
Those subsequent terms in this new sequence that come from the
multiplication of subsequent terms in the original sequences which in both
share\ the same-order coordinate, do probably also share same order terms as
well; there is a slight problem whenever they come from multiplication of
subsequent terms sharing the first term in the one, the second in the other,
hence like this:

\ $\left( \alpha ,\beta \right) $,$\left( \alpha ,\delta \right) $ in the
one (sharing the first coordinate) and $\left( a,b\right) $,$\left(
c,b\right) $ in the other (sharing the second coordinate), thus yielding the
subsequent terms \ $\left( \alpha a,\beta b\right) $,$\left( \alpha c,\delta
b\right) $ \ of the new sequence (of products), which do not share any term;
but then it will suffice to insert the new term $\left( \alpha c,\beta
b\right) =\left( \alpha ,\beta \right) \left( c,b\right) \in U$ between them.

It is, on the other hand, immediate to see that $\left( a,b\right) \sim
\left( a_{1},b_{1}\right) $ implies $\left( a^{-1},b^{-1}\right) \sim \left(
a_{1}^{-1},b_{1}^{-1}\right) $, just by taking the inverses of all terms in
the finite sequence.

We need to notice the obvious fact that $\left( \alpha ,\beta \right) \in U$
and $\left( \alpha ,1\right) \in U$\ imply $\left( 1,\beta \right) =\left(
\alpha ,\beta \right) \left( \alpha ,1\right) ^{-1}\in U$: by symmetry this
yields that, for $\left( \alpha ,\beta \right) \in U$, $\left( \alpha
,1\right) \in U\Longleftrightarrow \left( 1,\beta \right) \in U$ \ \textbf{%
(s)}.

We shall now show how these properties also imply that, whenever $(a,1)$
(respectively, $(1,b)$) is an element of $U$, the first coordinate in its
equivalence class $[(a,1)]$ \ (resp., the second in $[(1,b)]$ ) runs over a
normal subgroup of $\pi _{A}\left( U\right) $ ; in order to see this, we
shall show that, whenever \ $\left( \alpha ,\beta \right) \sim \left( \gamma
,1\right) $, \ it turns out that $\left( \alpha ,1\right) $ (and indeed $%
\left( 1,\beta \right) $ as well) is also an element of $U$ (and, of course,
in the same equivalence class) \textbf{(S)}.

Indeed, let $x_{0}=\left( \gamma ,1\right) ,x_{1},...,x_{n+1}=\left( \alpha
,\beta \right) $ be\ an adjacency sequence for the relationship $\left(
\alpha ,\beta \right) \sim \left( \gamma ,1\right) $; it is then immediate
to see that $x_{0}^{-1}x_{1}x_{2}^{-1}...x_{n+1}^{(-1)^{n}}=\left( \alpha
,1\right) $ or $\left( 1,\beta \right) $\ or $(\alpha ,1)^{-1}=\left( \alpha
^{-1},1\right) $ \ or $\left( 1,\beta \right) ^{-1}$, which then according
to our observation (s) above all of these 4 elements shall belong to $U$, in
particular $\left( \alpha ,1\right) \in U$, as stated.

Hence given any $\left( \alpha ,\beta \right) \in U$, which belongs to the
equivalence class of $\left( 1,1\right) $\textit{,} this can be inside that
class decomposed as $\left( \alpha ,\beta \right) =$ $\left( \alpha
,1\right) \left( 1,\beta \right) $.\ \textit{In particular, this implies
that any element of the equivalence class of }$\left( 1,1\right) \in U$%
\textit{\ is generated by elements adjacent to }$\left( 1,1\right) $\textit{%
, i.e. having }$1$\textit{\ in the one coordinate. If we now also observe
that being adjacent for any two elements of }$U$\textit{\ is the same as
getting any of them by multiplying the other by such a generating element
(i.e. adjacent to }$\left( 1,1\right) $\textit{), this yields that the
equivalence class of }$\left( 1,1\right) $\textit{\ is in fact a subgroup }$%
I $\textit{\ of }$U$\textit{, which we shall call its \textbf{"subdirect
core"}. It is thus also clear in our case (i.e., n=2) that this subdirect
core equals }$\left( U\cap A\right) \times \left( U\cap B\right) $. (Remark:
This decomposition of the subdirect core does not hold in general for $n>2$).

The observation (S) above means also that the first coordinate of the class $%
[(a,1)]=I$ runs over the same set as its elements with second coordinate $1$%
; on the other hand, every element of this form in $U$ apparently belonging
to the same class, this special equivalence class is \ a subgroup of $U$,
which is generated by the elements of the form $\left( \alpha ,1\right) $ or 
$\left( 1,\beta \right) $, hence it must also be normal both in $U$ and in $%
\pi _{A}\left( U\right) $, as conjugation by elements of $U$ yields elements
of the same form, hence in the same class-subgroup and in $A$ as well, apart
from being in advance clear that the set of elements of this form in $U$ is
the group $U\cap $\ $\pi _{A}\left( U\right) $. (This gives also an
alternative way to see the normality of $U\cap A=U\cap \pi _{A}\left(
U\right) $ in $U$ and in $\pi _{A}\left( U\right) $, otherwise clear from
the fact that $\pi _{A}\left( U\right) \times \pi _{B}\left( U\right) $
normalizes $\pi _{A}\left( U\right) $, hence $U\leq \pi _{A}\left( U\right)
\times \pi _{B}\left( U\right) $ must normalize \ $U\cap \pi _{A}\left(
U\right) $ - and then it follows immediately that also $\pi _{A}\left(
U\right) $ has to normalize\ $U\cap \pi _{A}\left( U\right) $; on the other
hand, one sees that the kernel of the restriction to $U$ of $\pi _{A}\left(
U\right) \times \pi _{B}\left( U\right) $ \ 
\'{}%
s projection onto its second factor is precisely $U\cap \pi _{A}\left(
U\right) =U\cap A$.) When we transfer the same remarks to the second factor,
we see immediately that the equivalence class of the elements of $U$ of the
form $(a,1)$, also containing $(1,1)$ and the elements $(1,b)$ in $U$,
equals the group $\left( U\cap A\right) \times \left( U\cap B\right) $.

We wish next to see another interpretation of our equivalence relation on $U$
through its special properties which we have seen:

Assume, so, that $\left( a,b\right) \sim \left( c,d\right) $; invoking the
multiplication property, we may multiply this with the trivial relationship $%
\left( a^{-1},b^{-1}\right) \sim \left( a^{-1},b^{-1}\right) $ to get \ $%
\left( a^{-1}c,b^{-1}d\right) \sim \left( 1,1\right) \Leftrightarrow $\ \ $%
\left( a^{-1}c,b^{-1}d\right) \in \lbrack \left( 1,1\right) ]$=\ \ $\left(
U\cap A\right) \times \left( U\cap B\right) \Longleftrightarrow c\in a\left(
U\cap A\right) $\&$d\in b\left( U\cap B\right) \Leftrightarrow \left(
c,d\right) \in a\left( U\cap A\right) \times b\left( U\cap B\right) $, which
shows that the class of $\left( a,b\right) \in U$ is the Cartesian product
of the left (and right) cosets $a\left( U\cap A\right) $ and $b\left( U\cap
B\right) $ of $U\cap A$, resp. of $U\cap B$, in $\pi _{A}\left( U\right) $,
resp. in $\pi _{B}\left( U\right) $. Now, $a\left( U\cap A\right) \times
b\left( U\cap B\right) $ being an equivalence class, in case there were also
a class $a\left( U\cap A\right) \times d\left( U\cap B\right) $ , while from
the definition of our relation is $\left( a,b\right) \sim \left( a,d\right)
, $ this "new" class has to be identical with $a\left( U\cap A\right) \times
b\left( U\cap B\right) $, i.e. $d\left( U\cap B\right) =b\left( U\cap
B\right) $. This crucial remark establishes a bijection $\sigma :\pi
_{A}\left( U\right) \diagup U\cap A\rightarrow \pi _{B}\left( U\right)
\diagup U\cap B$ , which is bound to be a group homomorphism (hence an
isomorphism), considering the special property (1) of our relation. This
means that $U$ is partitioned into classes which may be described as \textit{%
"pair fibres" of the form }$a\left( U\cap A\right) \times \left( a\left(
U\cap A\right) \right) ^{\sigma }$\textit{,} which at the same time
determines a unique coset $\ \left( a,b\right) \left( U\cap A\right) \times
\left( U\cap B\right) $, where $b$ may be any element of the coset $\left(
a\left( U\cap A\right) \right) ^{\sigma }$ in $\pi _{B}\left( U\right) $.
This observation "prolongs" the bijection $\sigma :\pi _{A}\left( U\right)
\diagup U\cap A\longleftrightarrow \pi _{B}\left( U\right) \diagup U\cap B$
\ to $\pi _{A}\left( U\right) \diagup U\cap A\longleftrightarrow \pi
_{B}\left( U\right) \diagup U\cap B\longleftrightarrow U\diagup \left( U\cap
A\right) \times \left( U\cap B\right) $, still in a homomorphic manner, as
property \textbf{(m)} suggests.

As for the converse assertion, we can easily prove that the given
isomorphism $\sigma $ defines a unique subgroup \ $U\leq A\times B$ - first
as a subset, while the group structure follows from that of the direct
product, where it is embedded; then $U$, according to the previous,
determines a unique such isomorphism, which hence must be $\sigma $. \ \ 

Concerning the interpretation as fiber products, that is quite clear - see
for example \cite{DHP}.
\end{proof}

Another way to realize $U$ could be to view it as a total space of a bundle $%
(U,p,R,\left( U\cap A\right) \times \left( U\cap B\right) )$, $R$\ being the
base group, with "typical fibre" $\left( U\cap A\right) \times \left( U\cap
B\right) $,\ where $p:U\twoheadrightarrow R=U\diagup \left( U\cap A\right)
\times \left( U\cap B\right) $ is the canonical projection, and for $t\in R$
the fibre (:"$\thicksim $"-equivalence class or "adhesive fibre") is $%
A_{t}\times B_{t}$\ with $A_{t}$, $B_{t}$ its ($\sigma $-) corresponding $%
\left( U\cap A\right) $-, resp. $\left( U\cap B\right) $-, cosets in $\pi
_{A}\left( U\right) $, resp. $\pi _{B}\left( U\right) $, so that $%
p^{-1}(t)=A_{t}\times B_{t}$. This point of view can also be adapted to our
theorems \ref{ssgTh}, \ref{smTh} and \ref{gTh} below.

For later use in section 6 we finally prove (independently of theorem 1) the
following

\begin{lemma}
\label{L2}$\pi _{A}\left( U\right) \cong U\diagup U\cap B$,\ $\pi _{B}\left(
U\right) \cong U\diagup U\cap A$.\ \ \ \ \ \ \ \ \ \ \ \ \ \ \ \ \ \ \ \ \ \
\ \ \ \ \ \ \ \ \ \ \ \ \ \ \ \ \ 
\end{lemma}

\begin{proof}
\bigskip Observe that $\ker \left( \pi _{A}|_{U}\right) =U\cap B$,\ $\ker
\left( \pi _{B}|_{U}\right) =U\cap A$.\ \ \ \ \ \ \ \ \ \ \ \ \ \ \ \ \ \ \
\ \ \ \ \ \ 
\end{proof}

\bigskip

\section{The generic case}

\bigskip Let $U\leq A=A_{1}\times A_{2}\times ...\times A_{n}$ ; we assume
further that\textbf{\ }$\pi _{i}\left( U\right) \cap A_{i}$\textbf{\ is not
trivial for any }$i\in \left\{ 1,...,n\right\} $\ and then introduce the
following subgroups of this product: \ 

$E_{s}=\pi _{1...\widehat{s}...n}\left( U\right) \cap U$, which is clearly
equal to $Ker\pi _{s}\mid _{U}=$ $Ker\pi _{s}\cap U=\left(
Dr\tprod\limits_{i\neq s}A_{i}\right) \cap U$, \ \ consisting of the
elements of U, having 1 in the $s$-coordinate. More generally, for any\ set $%
\Lambda $ of indices $i_{1}\langle i_{2}\langle ...\langle i_{s}$ from $%
\left\{ 1,...,n\right\} $, define $E_{i_{1}i_{2}...i_{s}}:=\pi _{1...%
\widehat{i_{1}}...\widehat{i_{2}}...\widehat{i_{s}}...n}\left( U\right) \cap
U$, consisting of the elements of U, having 1 in the $i_{1},i_{2},...,i_{s}$%
-coordinates. It is obvious that $E_{1...\widehat{i_{1}}...\widehat{i_{2}}...%
\widehat{i_{s}}...n}=\tbigcap\limits_{t\notin \left\{
i_{1},...,i_{s}\right\} }E_{t}=Ker\pi _{\widehat{\Lambda }}|_{U}$ \ while $%
E_{i_{1}...i_{2}...i_{s}}=\tbigcap\limits_{t\in \left\{
i_{1},...,i_{s}\right\} }E_{t}=E_{i_{1}}\cap E_{i_{2}}\cap ...\cap
E_{i_{s}}=Ker\pi _{_{\Lambda }}|_{U}$, $E_{\left\{ 1,...,n\right\} }$ is the
trivial subgroup of $I$.

\begin{definition}
We define, as in the case n=2, a relation "$\sim $" on U, in two steps:
first, we stipulate that $\overline{a}=\left( a_{1},...,a_{n}\right) \sim 
\overline{b}=\left( b_{1},...,b_{n}\right) $ whenever $a_{\lambda
}=b_{\lambda }$ for some $\lambda \in \left\{ 1,...,n\right\} $ (we will
then say that $\overline{a}$\ and $\overline{b}$ \textbf{touch one another
or are adjacent at the }$\lambda $\textbf{-coordinate}), then take the
minimal transitive extension of this first germ relation, to get an
equivalence relation on $U$;\ we shall designate equivalence classes of "$%
\sim $" by using square brackets $\left[ -\right] $, which shall also be
called "adhesive fibres" of the subgroup $U$. In particular, the equivalence
class $I=\left[ \left( 1,...,1\right) \right] $ of $U$ 's identity element
shall be referred to as the \textbf{the (subdirect) core} of the subgroup $U$
of \ $A_{1}\times A_{2}\times ...\times A_{n}$.\ 
\end{definition}

\begin{remark}
\label{dSum}\bigskip Our results from this section onward are easily seen to
be extendable to the case of a subgroup $U$\ of an infinite direct sum $%
\dbigsqcup\limits_{j\in J}A_{j}$, where the index set $J$ is a totally
ordered and countably infinite one (f.ex. $%
\mathbb{N}
$), in a similar manner (and along the ordering of $J$, upwards). As for the
definition of the above relation in this case, notice that also in the case
of infinitely many direct summands \textbf{only finite "connecting" sequences%
} of adjacent elements shall be entailed for any assertion of equivalence
between two elements of $U$. \ \ \ \ \ 
\end{remark}

Next we shall show $I$ to be a normal subgroup\ of $U$ and the equivalence
classes of "$\sim $" indeed the same as $I$ 's cosets in $U$.

The following subgroups of $U$, which are readily seen to be \textit{normal}
subgroups in $U$, shall play a crucial role in our investigation:

Define for any subsequence $\Lambda $ of indices $i_{1}\langle i_{2}\langle
...\langle i_{s}$ from $J=\left\{ 1,...,n\right\} $, \ \ $L_{\Lambda
}=L_{i_{1}...i_{2}...i_{s}}$ to be $\left( A_{i_{1}}\times A_{i_{2}}\times
...\times A_{i_{s}}\right) \cap I$, set also $L_{\varnothing }$\ to be the
trivial subgroup of $U$. Let us call them \textit{"subcores"} of $U$. Notice
that $E_{1...\widehat{s}...n}=L_{s}$, as they both consist of the elements
of $U$, having all but their $s$-coordinate equal to 1; also, $L_{1...%
\widehat{s}...n}=E_{s}$. More generally, $L_{\Lambda
}=L_{i_{1}i_{2}...i_{s}}=E_{1...\widehat{i_{1}}...\widehat{i_{2}}...\widehat{%
i_{s}}...n}=Ker\pi _{\widehat{\Lambda }}$\ , hence a normal subgroup of $U$\
for any proper subset $\Lambda $\ of $J=\left\{ 1,...,n\right\} $. However
this cannot be likewise concluded when $\Lambda =\left\{ 1,...,n\right\} $,
then yielding $I$ as $L_{\Lambda }$; \ instead, we are proving that in the
following proposition.

Notice that, for $M,N\subset \left\{ 1,...,n\right\} $, with\ $M\cap
N=\varnothing $, it follows that $L_{M}\cap L_{N}=1$, therefore\ $%
L_{M}L_{N}=L_{N}L_{M}$: in fact, they even commute elementwise (from the
original direct product).\ Set also $L_{\emptyset }=1$,\ $E_{\emptyset }=I$.

For any $M\subset \left\{ 1,...,n\right\} $, $\pi _{M}$ denotes the
corresponding projection from $A_{1}\times A_{2}\times ...\times A_{n}$ to $%
Dr\tprod\limits_{i\in M}A_{i}$.

We want to recall here the infinite symmetric group $\Sigma _{\infty }$,
definable f.ex. on the set of positive integers and consisting of the
permutations of it, that fix all but a finite subset of it. We shall also
use the fact that the symmetric group $S_{n}$\ is generated by its
involutions, i.e. transpositions, which is also the case for $\Sigma
_{\infty }$. The latter equals the injective limit of all the symmetric
groups $S_{n}$, $n\in 
\mathbb{N}
^{\ast }$. \textbf{Actually we may just choose the }$n-1$\textbf{\
transpositions }$\left( i,i+1\right) $\textbf{, }$i=1,...n-1$\textbf{,\ as
generators for }$S_{n}$\textbf{\bigskip\ - hence for }$\Sigma _{\infty }$%
\textbf{\ too.\ }

\begin{proposition}
\label{pr3}The equivalence class $I=\left[ \left( 1,...,1\right) \right] $
of $U$ 's identity element is a normal subgroup of $U$, henceforth to be
called \textbf{the subdirect core} of the subgroup $U$ of \ $A_{1}\times
A_{2}\times ...\times A_{n}$, generated by its subgroups $E_{i}$, $i=1,...,n$%
; any $E_{i}$ is normal both in $U$ and in $\pi _{1...\widehat{i}...n}\left(
U\right) $. Furthermore, $I=\tprod\limits_{i=1}^{n}E_{i}=\tprod%
\limits_{i=1}^{n}E_{\tau \left( i\right) }$, where $\tau $ is any
permutation in $S_{n}$, a result that also holds for the subgroup $I_{M}$ of 
$I$ generated by any non-empty subset of $\left\{ E_{1},...,E_{n}\right\} $,
corresponding to a subset $M$\ of $\left\{ 1,...,n\right\} $. In the case
that we have a subgroup $U$\ of an infinite direct sum $\dbigsqcup\limits_{j%
\in J}A_{j}$, where the index set $J$ is countably infinite, we define
similarly the core $I$\ as the subgroup $I=$ $\dbigsqcup\limits_{j\in
J}E_{j} $ of $U$ , wherein it is now again possible to permute summands by
any element of the infinite symmetric group $\Sigma _{\infty }$.
\end{proposition}

\begin{proof}
$E_{s}\trianglelefteq U$, $s=1,2,...,n$ , because $E_{s}$ consists precisely
of the elements of $U$, that have $1$ in the $s$-coordinate - a property
maintained through conjugation in U. Alternatively, we might just use that $%
E_{\Lambda }=Ker\pi _{\Lambda }|_{U}$.

Observe that $\overline{a}=\left( a_{1},...,a_{n}\right) \in
I\Leftrightarrow $ $\overline{a}\sim \left( 1,...,1\right) \iff \exists $ a
finite sequence $\overline{a^{0}}=\overline{a},\overline{a^{1}},...,%
\overline{a^{\mu +1}}=\left( 1,...,1\right) $, set $\overline{a^{\kappa }}%
=\left( a_{1}^{\kappa },...,a_{n}^{\kappa }\right) $ for $\kappa
=0,1,...,\mu +1$, such that any two neighbouring terms $\overline{a^{\kappa
-1}},\overline{a^{\kappa }}$ share, say, their $i_{\kappa }$-coordinate. (By
assuming this sequence to be of minimal length, we get that $a_{i}^{\kappa
}\neq 1\forall i\in \left\{ 1,...,n\right\} $ whenever $\kappa \langle \mu $%
). Then, by taking the sequence of the inverses we get such an "adjacency
sequence" yielding the relationship $\overline{a}^{-1}\sim \left(
1,...,1\right) $, proving that $\overline{a}^{-1}\in I$ as well.

Before proceeding to prove that, given another $\overline{b}=\left(
b_{1},...,b_{n}\right) \in I$, $\overline{a}\overline{b}$ shall belong to $I$
too, we must make a crucial remark: in the above "adjacency sequence" for $%
\overline{a}\sim \left( 1,...,1\right) $, $\overline{a^{\kappa }}^{-1}%
\overline{a^{\kappa -1}}\in U$ with $1$ in the $i_{\kappa }$-coordinate,
hence $\overline{a^{\kappa }}^{-1}\overline{a^{\kappa -1}}\in E_{i_{\kappa
}} $, allowing us to replace the condition for the existence of an
"adjacency sequence" for $\overline{a}\sim \left( 1,...,1\right) $ with the
possibility to write $\overline{a}$ as a product of elements belonging to
the several $E_{i}$'s: indeed,

$\overline{a}=\overline{a^{0}}=\overline{a^{\mu }}\left( \overline{a^{\mu }}%
^{-1}\overline{a^{\mu -1}}\right) \left( \overline{a^{\mu -1}}^{-1}\overline{%
a^{\mu -2}}\right) ...\left( \overline{a^{2}}^{-1}\overline{a^{1}}\right)
\left( \overline{a^{1}}^{-1}\overline{a^{0}}\right) =$ \ \ 

\ $=\overline{\alpha ^{\mu +1}}\overline{\alpha ^{\mu }}\overline{\alpha
^{\mu -1}}...\overline{\alpha ^{2}}\overline{\alpha ^{1}}$, with any $%
\overline{\alpha ^{\kappa }}\in E_{i_{\kappa }}$,\ where it is obvious what
we have substituted the greek $\overline{\alpha }$ 's for; conversely, given
such an expression of $\overline{a}=\overline{a^{0}}$ as $\overline{\alpha
^{\mu +1}}\overline{\alpha ^{\mu }}...\overline{\alpha ^{2}}\overline{\alpha
^{1}}$, with any $\overline{\alpha ^{\kappa }}\in E_{i_{\kappa }}$, we get
the adjacency sequence $\overline{a}=\overline{a^{0}}=$ $\overline{\alpha
^{\mu +1}}\overline{\alpha ^{\mu }}...\overline{\alpha ^{2}}\overline{\alpha
^{1}}$, $\overline{a^{1}}=\overline{\alpha ^{\mu +1}}\overline{\alpha ^{\mu }%
}...\overline{\alpha ^{3}}\overline{\alpha ^{2}}$, $\overline{a^{2}}=$ $%
\overline{\alpha ^{\mu +1}}\overline{\alpha ^{\mu }}...\overline{\alpha ^{3}}
$, ..., $\overline{a^{\mu -1}}=$ $\overline{\alpha ^{\mu +1}}$ $\overline{%
a^{\mu }}$ , $\overline{a^{\mu }}=$ $\overline{\alpha ^{\mu +1}}$, $%
\overline{a^{\mu +1}}=\left( 1,...,1\right) $. Hence we may also write $%
\overline{b}\in I$ as a product of elements of the several $E_{i}$'s, say $%
\overline{b}$=$\overline{\beta ^{\nu }}\overline{\beta ^{\nu -1}}...%
\overline{\beta ^{2}}\overline{\beta ^{1}}$, therefore it becomes obvious
through this new equivalent condition for an element of $U$ to belong to $I$
that also the product $\overline{a}\overline{\beta }=\overline{\alpha ^{\mu }%
}\overline{\alpha ^{\mu -1}}...\overline{\alpha ^{2}}\overline{\alpha ^{1}}%
\overline{\beta ^{\nu }}\overline{\beta ^{\nu -1}}...\overline{\beta ^{2}}%
\overline{\beta ^{1}}$ belongs to $I$, proving that $U$ is generated by its
subgroups $E_{i},i=1,...,n$ \ - \ i.e., $I=\left\langle
\tbigcup\limits_{i=1}^{n}E_{i}\right\rangle $. This, combined with the
normality of the $E_{i}$'s in $U$, assures that $I$ is normal in $U$.

As for the last claim, it will suffice to prove that, for any $a\in I$, it
is possible to write it as a product $\overline{\alpha ^{\mu +1}}\overline{%
\alpha ^{\mu }}...\overline{\alpha ^{2}}\overline{\alpha ^{1}}$, where $%
\overline{\alpha ^{\kappa }}\in E_{i_{\kappa }}$, in such a way, that $%
i_{\mu +1}\langle i_{\mu }\langle ...\langle i_{2}\langle i_{1}$ - or even
in a way such that this ordering will first be valid after application of
the (random) permutation $\tau ^{-1}$; to see this, it will obviously be
enough to prove that, given a product $e_{i}e_{j}$ with $e_{i}\in
E_{i},e_{j}\in E_{j}$, it is always possible to write it in the form $%
e_{j}^{\prime }e_{i}^{\prime }$, where $e_{i}^{\prime }\in
E_{i},e_{j}^{\prime }\in E_{j}$; in particular, we need that just for $j=i+1$%
, as we can then generate any permutation $\tau $. For notational
convenience we prove it for $i=1$, $j=2$; so, let $e_{1}\in E_{1,}e_{2}\in
E_{2}$. By taking their commutator $[e_{1}{}_{,}e_{2}]$, one sees directly
that it belongs to $E_{1}\cap E_{2}=E_{12}$, hence $%
e_{1}{}e_{2}{}=e_{2}e_{1}[e_{1}{}_{,}e_{2}]$ , which is a product of $%
e_{2}\in E_{2}$ and $e_{1}[e_{1}{}_{,}e_{2}]\in E_{1}$. (Alternatively, it
is enough to remember that the $E_{i}$'s, as well as any (finite) products
of them, are all normal subgroups of $U$).

We emphasize here that this does not in general mean that the elements of $%
E_{i}$ commute with those of $E_{j}$ (with $i\neq j$), unless $n=2$, in
which case the commutator above becomes the identity element of $U$, as $%
E_{12}$ is then the trivial subgroup.
\end{proof}

It cannot be overstressed that the core $\mathbf{I}$ does NOT, in general,
pertain to the group $U$, but to its particular given inclusion \ $U\leq
A_{1}\times A_{2}\times ...\times A_{n}$ as a subgroup of \textit{that}
direct product.

\begin{lemma}
\label{eqR}\bigskip For $\overline{a},\overline{b}\in U,$ it holds that $%
\overline{a}\sim \overline{b}$ iff \ $\overline{b}^{-1}\overline{a}\in I$ ;
\ hence equivalence classes of "$\sim $" \ is the same thing as $I$ 's
cosets in $U$.
\end{lemma}

\begin{proof}
$\overline{a}\sim \overline{b}\iff \exists $ a finite sequence $\overline{%
a^{0}}=\overline{a},\overline{a^{1}},...,\overline{a^{\mu +1}}=\overline{b}$
, such that any two neighbouring terms $\overline{a^{\kappa -1}},\overline{%
a^{\kappa }}$ share, say, their $i_{\kappa }$-coordinate for $\kappa
=1,...,\mu +1$, meaning that $\overline{a^{\kappa }}^{-1}\overline{a^{\kappa
-1}}\in E_{i_{\kappa }}$; set $\overline{e^{\kappa }}=\overline{a^{\kappa }}%
^{-1}\overline{a^{\kappa -1}}$. Now, $\overline{a}=\overline{a^{0}}=%
\overline{a^{\mu +1}}\left( \overline{a^{\mu +1}}^{-1}\overline{a^{\mu }}%
\right) \left( \overline{a^{\mu }}^{-1}\overline{a^{\mu -1}}\right)
...\left( \overline{a^{2}}^{-1}\overline{a^{1}}\right) \left( \overline{a^{1}%
}^{-1}\overline{a^{0}}\right) =$ $\overline{b}\overline{e^{\mu +1}}\overline{%
e^{\mu }}...\overline{e^{2}}\overline{e^{1}}\Rightarrow \overline{b}^{-1}%
\overline{a}\in I$ , the converse becoming apparent by expressing $\overline{%
b}^{-1}\overline{a}\in I$ \ as a product of elements of the $E_{i}$'s and
then using the just obtained equivalent condition for $\overline{a}\sim 
\overline{b}$\ .\bigskip \bigskip
\end{proof}

\begin{remark}
Despite the "simplifying" assertion of the preceding lemma it is however
important for our understanding and analysis also to continue identifying
the cosets of $I$\ as "adhesive fibres", according to our original
definition of the equivalence relation. It is meanwhile precisely that
understanding, which has lead us to this new approach and insight into the
subdirect structure - while, besides being the starting point of view for
the bulk of our general analysis here, it also proves crucial later for the
proper understanding of the following subsections.
\end{remark}

\bigskip The first four points of the following lemma are a direct
consequence of the definitions:

\begin{lemma}
\label{basL}Let $\varnothing \neq M,M%
{\acute{}}%
,N\subseteq \left\{ 1,...,n\right\} $. Then, (i) $E_{M}\cap E_{N}=E_{M\cup
N} $\ ,(ii) For $M%
{\acute{}}%
\subseteq M$, $L_{M%
{\acute{}}%
}\subseteq L_{M}$, (iii) Hence $L_{M\cup N}\supseteq L_{M}L_{N}$, $%
L_{N}L_{M} $, which are both equal to $L_{M}\times L_{N}$ in case $M\cap
N=\varnothing $, (iv) $L_{M\cap N}=L_{M}\cap L_{N}$. Furthermore, for any
subset $\Lambda $ of the set $J$ of indices, $E_{\Lambda }\trianglelefteq
\pi _{\widehat{\Lambda }}\left( U\right) $.
\end{lemma}

\begin{proof}
\bigskip We need only to prove the last assertion.

We shall show that $E_{s}\trianglelefteq \pi _{1...\widehat{s}...n}\left(
U\right) $. Just for notational convenience, we will show this for s=1, i.e.
that $E_{1}\trianglelefteq \pi _{2...n}\left( U\right) $. Let, so, $%
\overline{a}=\left( 1,a_{2},...,a_{n}\right) \in E_{1}$ and $\overline{%
b^{\prime }}=\left( b_{2},...,b_{n}\right) =\left( 1,b_{2},...,b_{n}\right)
\in \pi _{2...n}\left( U\right) $, which means that there exists some $%
b_{1}\in A_{1}$, such that $\overline{b}=\left( b_{1},...,b_{n}\right) \in U$%
; then $\overline{a}^{\overline{b^{\prime }}}=\overline{a}^{\overline{b}}\in
U$, therefore $\overline{a}^{\overline{b^{\prime }}}\in \pi _{2...n}\left(
U\right) \cap U=E_{1}$, completing the argument. The argument in the general
case of any subset $\Lambda $ of the set $J$ of indices is completely
similar.
\end{proof}

\bigskip

The next lemma is a very practical one, although quite obvious:

\begin{lemma}
\label{nonD}For $M\cap N=\varnothing $ above, non-equality in $L_{M\cup
N}\supseteq $\ $L_{M}\times L_{N}$\ means the existence of some element $%
\overline{a}=\left( a_{M};a_{N};1,...,1\right) \in I$, such that $\pi
_{M}\left( \overline{a}\right) $($=a_{M}$)$\notin I$ ($\Leftrightarrow \pi
_{N}\left( \overline{a}\right) \notin I$). By an equivalent formulation, for 
$M\cap N=\varnothing $, $L_{M\cup N}=$\ $L_{M}\times L_{N}$\ iff for any $%
\overline{a}\in L_{M\cup N}$, $\pi _{M}\left( \overline{a}\right) \in
L_{M\cup N}$\ ($\Leftrightarrow \pi _{N}\left( \overline{a}\right) \in
L_{M\cup N}$).
\end{lemma}

\begin{definition}
\bigskip \label{SignD}Let $\Lambda $ be any\ set of indices $i_{1}\langle
i_{2}\langle ...\langle i_{s}$ from $\left\{ 1,...,n\right\} $, $L_{\Lambda
}=L_{i_{1}...i_{2}...i_{s}}$ the corresponding "subcore" $\left(
A_{i_{1}}\times A_{i_{2}}\times ...\times A_{i_{s}}\right) \cap I$ of $U$.
We shall call such a non-trivial subcore $L_{\Lambda
}=L_{i_{1}i_{2}...i_{s}} $ \textbf{cohesive in }$U\leq A_{1}\times
A_{2}\times ...\times A_{n}$\textbf{\ } if there is no non-trivial partition 
$\Lambda =\left\{ i_{1},i_{2},...,i_{s}\right\} =M\cup N$ of\ $\Lambda $\ ($%
M\cap N=\varnothing $), with $L_{M}$, $L_{N}$ non-trivial and $L_{\Lambda
}=L_{M}\times L_{N}$, i.e., so that $L_{\Lambda }$ split over $L_{M}\ $($%
\Leftrightarrow $over $L_{N}$).\ 

A subcore $L_{\Lambda }$\ shall be called \textbf{reducible} if there is a
proper subset $M\subset \Lambda $, such that $L_{M}=L_{\Lambda }$; otherwise
we shall designate it as a \textbf{non-reducible} subcore.
\end{definition}

The last lemma \ref{nonD} is crucial up to the proof of the following one:

\begin{lemma}
Assume $\varnothing \neq M$, $N\subseteq \left\{ 1,...,n\right\} $ with $%
M\cap N\neq \varnothing $, such that the subcores $L_{M}$, $L_{N}$\ be
cohesive and non-reducible. Then $L_{M\cup N}$\ is cohesive too. Therefore
there are uniquely determined maximal cohesive subcores $L_{N_{i}}$ 's, $%
i=1,2,...,n$ , which we shall call the \textbf{cohesive components }of I,
and we have correspondingly the finest possible decomposition $%
I=L_{N_{1}}\times L_{N_{2}}\times ...\times L_{N_{n}}$ of the core.
\end{lemma}

\begin{proof}
\bigskip Assume, to the contrary, that there exists a non-trivial partition $%
S\sqcup S^{\prime }$\ of $M\cup N$, so that \textbf{(1)} $L_{M\cup
N}=L_{S}\times L_{S^{\prime }}$. Set now $S\cap M=S_{1}$, $S\cap N=S_{2}$, $%
S^{\prime }\cap M=S_{1}^{\prime }$, $S^{\prime }\cap N=S_{2}^{\prime }$. We
shall show that $L_{M}=L_{S_{1}}\times L_{S_{1}^{\prime }}$\ and $%
L_{N}=L_{S_{2}}\times L_{S_{2}^{\prime }}$; by elementary set-theoretic
arguments on the index-sets it is immediate to see that, in any case, at
least one of the above direct decompositions is non-trivial, which yields a
contradiction to the cohesiveness. The proof being similar in both cases, we
are restricting ourselves to showing the first one.

Take any arbitrary $\overline{a}\in L_{M}\subseteq L_{M\cup N}$, hence by 
\textbf{(1)} and lemma \ref{nonD} $\pi _{S}\left( \overline{a}\right) \in I$%
, implying both, $\pi _{S}\left( \overline{a}\right) \in L_{S}$\ and $\pi
_{S}\left( \overline{a}\right) \in L_{M}$, therefore by lemma \ref{basL}(iv) 
$\pi _{S}\left( \overline{a}\right) \in L_{S\cap M}=L_{S_{1}}$and, a\
fortiori, $\pi _{S_{1}}\left( \overline{a}\right) \in L_{S_{1}}$; similarly, 
$\pi _{S_{1}^{\prime }}\left( \overline{a}\right) \in L_{S_{1}^{\prime }}$.
Then, again by lemma \ref{nonD}, we get $L_{M}=L_{S_{1}}\times
L_{S_{1}^{\prime }}$, as wished.

This property guarantees that any cohesive subcore is contained in a maximal
one; then, clearly by virtue of maximality, the (direct) product of all the
maximal cohesive subcores gives the whole of $I$.\thinspace
\end{proof}

\begin{remark}
\label{dSum2}\bigskip In case we had an infinite sum (see remark \ref{dSum})
instead of the finite case we have considered in our proofs, the first part
of the last lemma would ensure that the set of cohesive subcores is
inductively ordered, hence Zorns lemma applies, to give that any cohesive
subcore is contained in a maximal one also in this case.
\end{remark}

\begin{definition}
In case the cohesive components of $I$ are precisely the $L_{i}$ 's, $%
i=1,2,...,n$ , or, equivalently, $I=L_{1}\times L_{2}\times ...\times L_{n}$%
, we shall call the subgroup $U\leq A_{1}\times A_{2}\times ...\times A_{n}$
a (\textbf{cohesively) smashed} one. If for a non-empty, proper subset $M$\
of $\left\{ 1,...,n\right\} $, $I=L_{M}\times L_{\widehat{M}}$, then we
shall say that $I$\ splits over $L_{M}$ (or over $L_{\widehat{M}}$)\ - or
even, by a simplifying controlled abuse of language, over $M$\ (or $\widehat{%
M}$).

At the other extreme of such a non-trivial decomposition of the core, we
want to make\ another distinction, that of a \textbf{deltoid} subcore $%
L_{\Lambda }$\ of\ $U$, meaning that for any proper subset of indices $M$
of\ $\Lambda $, $L_{M}$ is trivial. A non-interesting special case of that
occurs whenever $\left\vert \Lambda \right\vert =1$, such subcores shall be
referred to as trivial ones. $U$\ itself shall be called deltoid if all $%
E_{i}$ 's are trivial; in that case it may also be viewed as a (non-proper)
deltoid subcore of itself.
\end{definition}

\bigskip The following two lemmata are quite immediate to see:

\begin{lemma}
\label{LsplC}Let a $M$ be a non-empty, proper subset\ of $\left\{
1,...,n\right\} $; then $L_{M}\times L_{\widehat{M}}\subseteq I$, with "="
holding iff $I$\ splits over $M$.
\end{lemma}

\begin{lemma}
\label{DD}\bigskip If, for a non-empty, proper subset $M$\ of $\left\{
1,...,n\right\} $, $L_{M}$\ is a maximal deltoid subcore of $U$ (even
trivially, with $\left\vert M\right\vert =1$), then $L_{M}$\ is a direct
factor of $I$.\bigskip \bigskip
\end{lemma}

\begin{lemma}
\ \label{LssgTh}\ Let $U\leq A_{1}\times A_{2}\times ...\times A_{n}$ and
assume also that all $\pi _{i}\left( U\right) $ 's are non-trivial
(otherwise we should have considered the maximal subproduct satisfying this
condition); then,\ the following are true:

\textbf{a.} For $\overline{a},\overline{b}\in U$, with $\overline{a}$ fixed, 
$\overline{b}$ variable, so that \ \ $\pi _{s}\left( \overline{b}\right) =$\ 
$\pi _{s}$\ $\left( \overline{a}\right) ,$ the variation domain of $\ 
\overline{b}$ $\ $is the coset $\overline{a}E_{s}$ in $U$ while the
variation domain of $\pi _{1...\widehat{s}...n}\left( \overline{b}\right) $ $%
\ $is the coset $\pi _{1...\widehat{s}...n}\left( \overline{a}\right) E_{s}$
in $\pi _{1...\widehat{s}...n}\left( U\right) $; conversely, by varying only
the s-coordinate, the variation space of $\overline{b}$ $\ $is the coset $%
\overline{a}E_{1...\widehat{s}...n}=\overline{a}L_{s}$ in $U$.\ \ More
generally, for any variable $\overline{a}=\left( a_{1},...,a_{n}\right) \in
U $ and any\ sequence of indices $i_{1}\langle i_{2}\langle ...\langle i_{s}$
from $\left\{ 1,...,n\right\} $, \ by fixing the $\left\{
i_{1},i_{2},...,i_{s}\right\} $-coordinates ($\left\{
a_{i_{1}},...,a_{i_{s}}\right\} $) and varying the others, so that \ $%
\overline{a}$\ \ remain in $U$, the variation domain of $\overline{a}$\ is
the coset $\ \overline{a}E_{i_{1}i_{2}...i_{s}}$; $\ $ conversely, by fixing
the complementary set of coordinates, the variation domain of $\overline{a}$
becomes $\overline{a}E_{1...\widehat{i_{1}}...\widehat{i_{2}}...\widehat{%
i_{s}}...n}$. Corresponding to that,\ $\ $the variation domain of $\pi _{1...%
\widehat{i_{1}}...\widehat{i_{2}}...\widehat{i_{s}}...n}\left( \overline{a}%
\right) $ is, in the first case, the coset $\pi _{1...\widehat{i_{1}}...%
\widehat{i_{2}}...\widehat{i_{s}}...n}\left( \overline{a}\right)
E_{i_{1}i_{2}...i_{s}}$ in $\pi _{1...\widehat{i_{1}}...\widehat{i_{2}}...%
\widehat{i_{s}}...n}\left( U\right) $; \ accordingly, by varying the $%
\left\{ i_{1},i_{2},...,i_{s}\right\} $-coordinates $\left\{
a_{i_{1}},...,a_{i_{s}}\right\} $ and fixing the others, so that $\overline{a%
}$ remain in $U$, the variation space of $\pi _{i_{1}...i_{2}...i_{s}}\left( 
\overline{a}\right) $ \ (while lying inside $\pi _{i_{1}i_{2}...i_{s}}\left(
U\right) $)\ is the coset $\pi _{i_{1}i_{2}...i_{s}}\left( \overline{a}%
\right) E_{1...\widehat{i_{1}}...\widehat{i_{2}}...\widehat{i_{s}}...n}$ ,\
for any particular ("original") value of $\overline{a}$.\ \ \ 

\textbf{b.} For any non-empty proper subset $M=\left\{
i_{1},i_{2},...,i_{s}\right\} $\ of $\left\{ 1,...,n\right\} $, \ say $%
i_{1}\langle i_{2}\langle ...\langle i_{s}$, there is a unique isomorphism \ 
$\sigma :$ $\pi _{1...\widehat{i_{1}}...\widehat{i_{2}}...\widehat{i_{s}}%
...n}\left( U\right) \diagup E_{i_{1}i_{2}...i_{s}}\widetilde{\rightarrow }$%
\ \ $\pi _{i_{1}i_{2}...i_{s}}\left( U\right) \diagup E_{1...\widehat{i_{1}}%
...\widehat{i_{2}}...\widehat{i_{s}}...n}$ ($\widetilde{\rightarrow }$\ $%
U\diagup \left( E_{i_{1}i_{2}...i_{s}}\times E_{1...\widehat{i_{1}}...%
\widehat{i_{2}}...\widehat{i_{s}}...n}\right) $),\ with the

"structural" property that, for any $\overline{a}=\left(
a_{1},...,a_{n}\right) \in U,$ \ $\sigma $ sends the coset $\pi _{1...%
\widehat{i_{1}}...\widehat{i_{2}}...\widehat{i_{s}}...n}\left( \overline{a}%
\right) E_{i_{1}i_{2}...i_{s}}$\ to $\pi _{i_{1}i_{2}...i_{s}}\left( 
\overline{a}\right) E_{1...\widehat{i_{1}}...\widehat{i_{2}}...\widehat{i_{s}%
}...n}$. In other words $U$ may be realized as a fiber product of $\pi _{1...%
\widehat{i_{1}}...\widehat{i_{2}}...\widehat{i_{s}}...n}\left( U\right) $
and\ $\pi _{i_{1}i_{2}...i_{s}}\left( U\right) $, over a fixed isomorphic
copy of the two sides of $\sigma $ with respect to their apparent
epimorphisms on it.\ 

As for the converse, a subgroup $U$\ is now determined by the following
data: A normal subgroup $I$\ of $U$, with the property that it only contains
one "$\sim $"-equivalence class, a partition $\left\{ 1,...,n\right\} =M\cup 
\widehat{M}$, subgroups $W_{M}\leq $\ $Dr\tprod\limits_{i\in M}A_{i}$\ and $%
W_{\widehat{M}}\leq $\ $Dr\tprod\limits_{i\in \widehat{M}}A_{i}$,
respectively containing $L_{M}$\ and $L_{\widehat{M}}$,\ together with a
"structural isomorphism" \ $\sigma :$ $W_{M}\diagup L_{M}\widetilde{%
\rightarrow }$\ \ $W_{\widehat{M}}\diagup L_{\widehat{M}}$, where $L_{M}=\pi
_{M}\left( I\right) $, $L_{\widehat{M}}=\pi _{\widehat{M}}\left( I\right) $. 
$U$ thus is\ determined set-theoretically as a subset of the direct product,
which in turn fully determines its group structure.
\end{lemma}

\begin{proof}
\ (We are treating the general case of (a), the first one just being a
special case of that.)

\textit{We shall throughout keep on the convention of viewing the image of
any projection from the original direct product as contained (embedded) in
that product too, in the obvious way.}

\textbf{a.} So, let $\overline{a^{\prime }}\in U$ have the same $\left\{
i_{1},i_{2},...,i_{s}\right\} $-coordinates ($\left\{
a_{i_{1}},...,a_{i_{s}}\right\} $) as $\overline{a}$, i.e. $\pi
_{i_{1}i_{2}...i_{s}}\left( \overline{a^{\prime }}\right) =\pi
_{i_{1}i_{2}...i_{s}}\left( \overline{a}\right) $; then obviously $\overline{%
\varepsilon }=\overline{a}^{-1}\overline{a^{\prime }}\in
E_{i_{1}i_{2}...i_{s}}=E_{M}\iff \overline{a^{\prime }}=\overline{a}%
\overline{\varepsilon }$ and $\overline{a^{\prime }}\in \overline{a}E_{M}=%
\overline{a}L_{\widehat{M}}$, meaning that $\overline{a}E_{M}=\overline{a}L_{%
\widehat{M}}$\ is the variation domain of the conditionally (i.e., lying
inside $U$) variable $\overline{a}\ \ $-$\ $which, while the $\left\{
i_{1},i_{2},...,i_{s}\right\} $-coordinates remain constant and only the
rest defines the variation, is equivalent to that \ $\pi _{1...\widehat{i_{1}%
}...\widehat{i_{2}}...\widehat{i_{s}}...n}\left( \overline{a^{\prime }}%
\right) =$ $\ $\ \ 

=$\pi _{1...\widehat{i_{1}}...\widehat{i_{2}}...\widehat{i_{s}}...n}\left( 
\overline{a}\overline{\varepsilon }\right) =\pi _{1...\widehat{i_{1}}...%
\widehat{i_{2}}...\widehat{i_{s}}...n}\left( \overline{a}\right) \pi _{1...%
\widehat{i_{1}}...\widehat{i_{2}}...\widehat{i_{s}}...n}\left( \overline{%
\varepsilon }\right) $ =

$=\pi _{1...\widehat{i_{1}}...\widehat{i_{2}}...\widehat{i_{s}}...n}\left( 
\overline{a}\right) \overline{\varepsilon }$, i.e., while $\overline{a}$
varies in the prescribed way, the variation domain of the really changing
part, $\pi _{1...\widehat{i_{1}}...\widehat{i_{2}}...\widehat{i_{s}}%
...n}\left( \overline{a}\right) $, is\ 

$\pi _{1...\widehat{i_{1}}...\widehat{i_{2}}...\widehat{i_{s}}...n}\left( 
\overline{a}\right) E_{i_{1}...i_{2}...i_{s}}$ or, by another notation, $\pi
_{\widehat{M}}\left( \overline{a}\right) L_{\widehat{M}}$. \ 

Conversely, by holding the (complementary) set of $\widehat{M}$-coordinates
of $\overline{a}$ fast, the variation domain of $\overline{a}$ becomes,
similarly, $\overline{a}E_{1...\widehat{i_{1}}...\widehat{i_{2}}...\widehat{%
i_{s}}...n}=\overline{a}E_{\widehat{M}}=\overline{a}L_{M}$\ - \ while that
of its really variable part\ \ $\pi _{i_{1}i_{2}...i_{s}}\left( \overline{a}%
\right) $\ \ shall be \ 

$\pi _{i_{1}i_{2}...i_{s}}\left( \overline{a}\right) E_{1...\widehat{i_{1}}%
...\widehat{i_{2}}...\widehat{i_{s}}...n}=\pi _{M}\left( \overline{a}\right)
L_{M}$.

In this way, any element $\overline{a}\in U$\ determines in this way an
assignment of the coset $\pi _{\widehat{M}}\left( \overline{a}\right) L_{%
\widehat{M}}$ from $\pi _{\widehat{M}}\left( U\right) \diagup L_{\widehat{M}%
} $\ to\ the coset $\pi _{M}\left( \overline{a}\right) L_{M}$\ from\ $\pi
_{M}\left( U\right) \diagup L_{M}$ - and vice versa. \ We are going to show
now that these assignments indeed define an isomorphism.

It is at first clear that, while $\overline{a}\ $varies over $U$, the union
of the cosets

$\pi _{i_{1}i_{2}...i_{s}}\left( \overline{a}\right) E_{1...\widehat{i_{1}}%
...\widehat{i_{2}}...\widehat{i_{s}}...n}=\pi _{M}\left( \overline{a}\right)
L_{M}$ gives the whole of $\pi _{M}\left( U\right) $ and, likewise, the
union of the cosets $\pi _{1...\widehat{i_{1}}...\widehat{i_{2}}...\widehat{%
i_{s}}...n}\left( \overline{a}\right) E_{i_{1}i_{2}...i_{s}}=\pi _{\widehat{M%
}}\left( \overline{a}\right) L_{\widehat{M}}$ gives all of $\pi _{\widehat{M}%
}\left( U\right) $.

\textbf{b.} \ Let us now deal with the bijectivity.

To that end we are also here introducing a special (w.r.t. $M$) relation on $%
U\leq \pi _{M}\left( U\right) \times $ $\pi _{\widehat{M}}\left( U\right) $,
where in this last product we shall denote the typical element $\overline{x}$
as $\overline{x}=\left( \overline{x}_{M};\overline{x}_{\widehat{M}}\right) $
with $\overline{x}_{M}=\pi _{M}\left( \overline{x}\right) $, $\overline{x}_{%
\widehat{M}}=\pi _{\widehat{M}}\left( \overline{x}\right) $; we shall still
allow ourselves to view $\overline{x}_{M}$,$\overline{x}_{\widehat{M}}$ as
elements of the original direct product as well, without notification.
Notice that as such they (as well as their resp. inverses) commute, because
of the direct product.

Let us then say for two such elements $\overline{x}$, $\overline{y}=\left( 
\overline{y}_{M};\overline{y}_{\widehat{M}}\right) $ to be "related" if
either $\overline{x}_{M}=\overline{y}_{M}$\ or $\overline{x}_{\widehat{M}}=%
\overline{y}_{\widehat{M}}$. This rule of course corresponds to our two
"variations" above, while keeping some coordinates fixed, respectively the
one way or the other, and amounts to either $\overline{x}^{-1}\overline{y}%
\in L_{M}$\ or $\overline{x}^{-1}\overline{y}\in L_{\widehat{M}}$. We then
let "$\mathfrak{r}$" be the least transitive relation generated by that
rule, which we may immediately see to be an equivalence relation, in fact
one quite reminiscent of that defined in our proof of theorem 1.

It is then immediate to see that for any given $\overline{x}=\left( 
\overline{x}_{M};\overline{x}_{\widehat{M}}\right) \in U$,\ the $\mathfrak{r}
$-equivalence class $\left[ \overline{x}\right] $ to which $\overline{x}$\
belongs is $\left[ \overline{x}\right] =\left\{ \left( \overline{x}_{M}L_{M};%
\overline{x}_{\widehat{M}}L_{\widehat{M}}\right) \right\} =$

=$\left\{ \left( \overline{x}_{M}l;\overline{x}_{\widehat{M}}l^{\prime
}\right) /l\in L_{M},l^{\prime }\in L_{\widehat{M}}\right\} \subseteq \left(
\pi _{M}\left( U\right) \diagup L_{M}\right) \times \left( \pi _{\widehat{M}%
}\left( U\right) \diagup L_{\widehat{M}}\right) $. By comparing this to the
assignment determined by any element, call it now $\overline{x}$, the way we
did in part \textbf{A} of the proof, and by virtue of the guaranteed
distinctiveness of the equivalent classes, it becomes clear that the
assignments in \textbf{A} altogether amount to a bijective function $\sigma
:\pi _{M}\left( U\right) \diagup L_{M}\longrightarrow $\ \ $\pi _{\widehat{M}%
}\left( U\right) \diagup L_{\widehat{M}}$, with the "structural" property of
assigning $\overline{x}_{M}L_{M}\longmapsto \overline{x}_{\widehat{M}}L_{%
\widehat{M}}$, for any $\overline{x}=\left( \overline{x}_{M};\overline{x}_{%
\widehat{M}}\right) \in U$. This assignment through $\overline{x}=\left( 
\overline{x}_{M};\overline{x}_{\widehat{M}}\right) \in U$ is thus also bound
to $\left( \overline{x}_{M}L_{M},\overline{x}_{\widehat{M}}L_{\widehat{M}%
}\right) =\left( \overline{x}_{M};\overline{x}_{\widehat{M}}\right) L_{M}L_{%
\widehat{M}}=\overline{x}L_{M}L_{\widehat{M}}=\overline{x}\left( L_{M}\times
L_{\widehat{M}}\right) $, hence bijection $\sigma $\ is extended, $\sigma
:\pi _{M}\left( U\right) \diagup L_{M}\longrightarrow $\ \ $\pi _{\widehat{M}%
}\left( U\right) \diagup L_{\widehat{M}}\longrightarrow U\diagup L_{M}\times
L_{\widehat{M}}$ by the totality of the (elementwise "structural")
assignments $\overline{x}_{M}L_{M}\longmapsto \overline{x}_{\widehat{M}}L_{%
\widehat{M}}\longmapsto \left( \overline{x}_{M}L_{M},\overline{x}_{\widehat{M%
}}L_{\widehat{M}}\right) $ for all $\overline{x}\in U$.

\textbf{c.} It remains now only to prove the homomorphic property of the
(extended) $\sigma $.

However this follows from its "structural property" and the componentwise
multiplication in the direct product, i.e. the fact that $\left( \overline{x}%
\overline{y}\right) _{M}=$ $\overline{x}_{M}\overline{y}_{M}$, $\left( 
\overline{x}\overline{y}\right) _{\widehat{M}}=$ $\overline{x}_{\widehat{M}}%
\overline{y}_{\widehat{M}}$\ - and so on. The situation is very similar to
that in our proof of theorem 1 (case $n=2$).

As for the converse, we construct the $\left( M;\widehat{M}\right) $-type
"pair fibres" of the suitable $U$\ thanks to the "structural property" of $%
\sigma $, so that $\pi _{M}\left( U\right) =W_{M}$ and $\pi _{\widehat{M}%
}\left( U\right) =W_{\widehat{M}}$. \ \ \ \ \ \ 
\end{proof}

\bigskip

\begin{theorem}
\label{ssgTh}\bigskip For any non-empty proper subset $M=\left\{
i_{1},i_{2},...,i_{s}\right\} $\ of $\left\{ 1,...,n\right\} $, there is a
unique isomorphism \ $\sigma :$ $\pi _{M}\left( U\right) \diagup L_{M}%
\widetilde{\rightarrow }$\ \ $\pi _{\widehat{M}}\left( U\right) \diagup L_{%
\widehat{M}}$ ($\widetilde{\rightarrow }$\ $U\diagup L_{M}\times L_{\widehat{%
M}}$), which further extends to an isomorphism to $R=U\diagup I$\ if and
only if $I$\ splits over $L_{M}$; if that is not the case, then $R$\ is
isomorphic to a quotient of $U\diagup L_{M}\times L_{\widehat{M}}$.

Furthermore the above isomorphism $\sigma $\ has the "structural" property
that, for any $\overline{a}=\left( a_{1},...,a_{n}\right) \in U,$ \ $\sigma $
sends the coset $\pi _{M}\left( \overline{a}\right) L_{M}$\ over to $\pi _{%
\widehat{M}}\left( \overline{a}\right) L_{\widehat{M}}$, which implies the
partition of $U$ into distinct "pair fibres".

That implies that also the converse of the statement is true, meaning that
we may restore $U$ from the following data: A partition $\left\{
1,...,n\right\} =M\cup \widehat{M}$, subgroups $W_{M}\leq $\ $%
Dr\tprod\limits_{i\in M}A_{i}$\ and $W_{\widehat{M}}\leq $\ $%
Dr\tprod\limits_{i\in \widehat{M}}A_{i}$, a normal subgroup $I$\ of $%
W_{M}\times W_{\widehat{M}}$, which is \textbf{"adhesive"} as a subset of
the original product, i.e. it only contains one "$\sim $"-equivalence
class,\ together with a "structural isomorphism" \ $\sigma :$ $W_{M}\diagup
L_{M}\widetilde{\rightarrow }$\ \ $W_{\widehat{M}}\diagup L_{\widehat{M}}$,
where $L_{M}=\pi _{M}\left( I\right) $, $L_{\widehat{M}}=\pi _{\widehat{M}%
}\left( I\right) $.
\end{theorem}

\begin{proof}
Directly from lemmata \ref{LssgTh}\ \& \ref{LsplC}\ .\ \ \ \ \ \ \ \ \ \ \ \
\ \ \ \ \ \ \ \ \ \ \ \ \ \ \ \ \ \ \ \ \ \ \ \ \ \ \ \ \ \ 
\end{proof}

\bigskip

\ \ \ \ \ \ \ \ \ \ \ \ \ \ \ \ \ \ \ \ \ \ \ \ \ \ \ \ \ \ \ \ \ \ \ \ \ \
\ \ \ \ \ \ \ \ \ \ \ \ \ \ \ \ \ \ \ \ \ \ \ \ \ \ \ \ \ \ \ \ \ \ \ \ \ \
\ \ \ \ \ \ \ \ \ \ \ \ \ \ \ \ \ \ \ \ \ \ \ \ \ \ \ \ \ \ \ \ \ \ \ \ \ \
\ \ \ \ \ \ \ \ \ \ \ \ \ \ \ \ \ \ \ \ \ \ \ \ \ \ \ \ \ \ \ \ \ \ \ \ \ \
\ \ \ \ \ \ \ \ \ \ \ \ \ \ \ \ \ \ \ \ \ \ \ \ \ \ \ \ \ \ \ \ \ \ \ \ \ \
\ \ \ \ \ \ \ \ \ \ \ \ \ \ \ \ \ \ \ \ \ \ \ \ \ \ \ \ 

\begin{remark}
\label{diagR}This theorem may be considered as a generalization of the case
n=2.\bigskip \bigskip

In both cases we may depict the subgroup $U$ diagrammatically as $%
\begin{array}{c}
\diagup \diagdown%
\end{array}%
$ - with $2$ edges. The converse statement in both theorems shows that the
subgroup $U$\ is fully determined once the following data is given: (i) The
2 edge-groups, call them $W_{M}$, $W_{\widehat{M}}$, amounting to the
subgroups $\pi _{M}\left( U\right) $, $\pi _{\widehat{M}}\left( U\right) $\
in the last theorem, (ii) A normal subgroup $I$\ of $W_{M}\times W_{\widehat{%
M}}$, such that, by defining $L_{M}=\pi _{M}\left( I\right) $, $L_{\widehat{M%
}}=\pi _{\widehat{M}}\left( I\right) $ (which shall correspond to the bottom
vertices, call them \textbf{socles}, of the two edges), the factor groups $%
W_{M}\diagup L_{M}$,\ $W_{\widehat{M}}\diagup L_{\widehat{M}}$\ are
isomorphic, both corresponding to the \textbf{"head"} vertex of the two
edges, (iii) An actual such isomorphism \ $\sigma :$ $W_{M}\diagup L_{M}%
\widetilde{\rightarrow }$\ \ $W_{\widehat{M}}\diagup L_{\widehat{M}}$, which
shall serve as the "structural isomorphism" for determining the "pair
fibres" of $U$. The condition for $U$ to be a subgroup in the original
direct product $A_{1}\times A_{2}\times ...\times A_{n}$\ is that $I$\ shall
be \textbf{"cohesive"} as a subset of that, where of course $W_{M}\leq $\ $%
Dr\tprod\limits_{i\in M}A_{i}$\ and $W_{\widehat{M}}\leq $\ $%
Dr\tprod\limits_{i\in \widehat{M}}A_{i}$. Notice that, in case $I$ splits
over $L_{M}$, the cohesive property amounts to just the cohesiveness of $%
L_{M}$ and $L_{\widehat{M}}$.

This picture does actually suggest that a whole "class" of such subgroups
may be defined by just varying the structural isomorphism $\sigma $: This is
actually the subject of subsection 4.2, where we however only actualize the
case that $I$ splits over $L_{M}$, but in the generalized context of theorem %
\ref{gTh}.
\end{remark}

\bigskip

Similarly to the lemma \ref{L2}, we prove also here the following:

\begin{lemma}
\label{LL2}For any\ non-empty proper subset $M$ of $\left\{ 1,...,n\right\} $%
, $\pi _{M}\left( U\right) \cong U\diagup L_{\widehat{M}}$, $\ \pi _{%
\widehat{M}}\left( U\right) \cong U\diagup L_{M}$.\ \ \ \ \ \ \ \ \ \ \ \ \
\ \ \ \ \ \ \ \ \ \ \ \ \ \ \ \ \ \ \ \ \ \ \ \ \ \ \ \ \ \ \ \ \ \ \ \ \ \
\ \ \ \ \ \ \ \ 
\end{lemma}

\begin{proof}
\bigskip Observe again that $\ker \pi _{M}|_{U}=L_{\widehat{M}}$,\ $\ker \pi
_{\widehat{M}}|_{U}=L_{M}$.\ \ \ \ \ \ \ \ \ \ \ \ \ \ \ \ \ \ \ \ \ \ 
\end{proof}

\begin{remark}
\bigskip \bigskip \bigskip \label{dSum3}In continuation of remarks \ref{dSum}
and \ref{dSum2}, by going through the arguments of our proofs in this
section, it is easy to see that they are generalizable to the case of an
infinite direct sum. Notice that corresponding to theorem \ref{ssgTh}, we
shall then have a partition $J=M\sqcup \widehat{M}$\ \ (where $\widehat{M}%
=J\smallsetminus M$, as usually)\ of\ \ the indexing set $J$.

The same generalizability of our results to the infinite case remains true
throughout the following section, however and for space economy we are not
going to point it out again and again in what follows.
\end{remark}

\section{The general structure}

\subsection{\protect\bigskip The general theorems}

\bigskip Let now $U\leq A_{1}\times A_{2}\times ...\times A_{n}$ ($n>1$) be
a subdirect product - i.e., $\pi _{i}\left( U\right) =A_{i}$ for all $i$ 's.

\begin{lemma}
\label{LgTh} Assume that $U$ above is deltoid, i.e. all $E_{i}$ 's are
trivial; then all $A_{i}$ 's are isomorphic to $U$, and there is a system of
("structural") isomorphisms between any two of them, such that $U$ consist
of $n$-tuples of through those isomorphisms corresponding elements. The
converse holds (trivially) too. In particular, the same holds for any
deltoid subcore $L_{\Lambda }$\ of\ $U$, by considering it as a subdirect
product of $\tprod\limits_{i\in \Lambda }\pi _{i}\left( U\right) $.
\end{lemma}

\begin{proof}
By assuming that we might have two elements $\overline{a},\overline{b}\in U$
with one, say the $i$ 'th, coordinate in common and with at least another
coordinate not in common, that would give the contradiction that \ \ $1\neq 
\overline{b}^{-1}\overline{a}\in E_{i}$ . This\ shows that U entirely
consists of mutually disjoint n-tuples $\overline{a}=\left(
a_{1},...,a_{n}\right) $ ; this, combined with the assumption $\pi
_{i}\left( U\right) =A_{i}$ for all $i$'s, establishes a system of bijective
maps between any two of the direct factors $A_{i}$. That these are group
homomorphisms, simply amounts to the group structure and the coordinatewise
multiplication in U.
\end{proof}

\begin{remark}
As $L_{i}\leq E_{j}$ for any $j\neq i$, the hypothesis of the lemma yields
that also all $L_{i}$ 's are trivial.
\end{remark}

\bigskip

For a non-empty, proper subset $M$\ of $\left\{ 1,...,n\right\} $, such that 
$L_{M}$\ is a maximal deltoid subcore of $U$, let $\kappa $\ be any
arbitrary coordinate contained in $M$, and let $M_{\kappa }$\ denote the set 
$M-\left\{ \kappa \right\} $. Then we shall call $L_{M_{\kappa }}$\ \textbf{%
a\ submaximal deltoid subcore of }$U$\textbf{\ }subject to the (apparently
uniquely determined) maximal $M$.

\begin{proposition}
With the above notation, for a submaximal core $L_{M_{\kappa }}$\ let us
simplify the notation by setting $V=\pi _{\widehat{M_{\kappa }}}\left(
U\right) $; we may consider $V$\ as a subgroup of $\tprod\limits_{i\in 
\widehat{M_{\kappa }}}A_{i}$ via its isomorphism to $A_{\kappa
}<\tprod\limits_{i\in \widehat{M_{\kappa }}}A_{i}$ (see lemma \ref{LgTh}).
Then, with the obvious meaning of notation, we have the following:

(i) $V$\ is a subdirect product of $\tprod\limits_{i\in \widehat{M_{\kappa }}%
}A_{i}$.

(ii) $L_{\kappa }\left( V\right) \cong L_{M}\left( U\right) $($=L_{M}$), $L_{%
\widehat{\kappa }}\left( V\right) =L_{\widehat{M}}\left( U\right) $($=L_{%
\widehat{M}}$).

(iii) $I\left( V\right) =L_{\widehat{M}}\left( V\right) \times L_{\kappa
}\left( V\right) \cong I\left( U\right) =L_{\widehat{M}}\times L_{M}$ , $%
V\diagup I\left( V\right) \cong U\diagup I$ and $V\cong U$.

It is thus possible, by continuing just as with the substitution of $V$\ for 
$U$\ here, to substitute a subdirect product $U$\ by another (subdirect in a
subproduct of the original $\tprod\limits_{i=1}^{n}A_{i}$) which is
isomorphic to it and has a similar subdirect structure but with no
non-trivial deltoid subcores.
\end{proposition}

\begin{proof}
(i) is an immediate consequence of the definition of $V$\ and the properties
of projections.

(ii) Notice that $\widehat{\kappa }$\ here means the complement of $\left\{
\kappa \right\} $ in $\widehat{M}\cup \left\{ \kappa \right\} $, i.e. $%
\widehat{M}$.$\ L_{\kappa }\left( V\right) =Ker\pi _{\kappa }|_{\pi _{%
\widehat{M_{\kappa }}}\left( U\right) }$, which is isomorphic to $%
L_{M}\left( U\right) $\ because of lemma \ref{LgTh}. The second one follows
because any element $x$ of $U$,\ with $\pi _{\kappa }\left( x\right) =1$,
belongs to $I\left( U\right) $, hence by lemma \ref{LgTh} also $\pi
_{M}\left( x\right) =1$, therefore $x\in L_{\widehat{M}}\left( U\right) $.

(iii) The equalities follow from\ lemmata \ref{DD} and \ref{LsplC}, then
apply (ii). Now $\pi _{\kappa }\left( V\right) =\pi _{\kappa }\left(
U\right) =A_{\kappa }$ and lemma \ref{LgTh} again shows that $\pi _{\kappa
}\left( U\right) \cong \pi _{M}\left( U\right) $, therefore $\pi _{\kappa
}\left( V\right) \cong \pi _{M}\left( U\right) $, while in this last
isomorphism $L_{\kappa }\left( V\right) $ corresponds to $L_{M}\left(
U\right) $ (as in (ii)), therefore also $\pi _{\kappa }\left( V\right)
\diagup L_{\kappa }\left( V\right) \cong \pi _{M}\left( U\right) \diagup
L_{M}\left( U\right) $\ (1).

On the other hand by theorem \ref{ssgTh} applied twice, $V\diagup I\left(
V\right) \cong $\ $\pi _{\kappa }\left( V\right) \diagup L_{\kappa }\left(
V\right) \cong \pi _{M}\left( U\right) \diagup L_{M}\left( U\right) $ (see
(1)) $\cong U\diagup I$, as $I=I\left( U\right) $ splits over $%
L_{M}=L_{M}\left( U\right) $. But, also theorem \ref{ssgTh}, this last
quotient is also isomorphic to $\pi _{\widehat{M}}\left( U\right) \diagup L_{%
\widehat{M}}\left( U\right) $, while the first in the above sequence of
isomorphisms $V\diagup I\left( V\right) \cong $\ $\pi _{\widehat{M}}\left(
V\right) \diagup L_{\widehat{M}}\left( V\right) $, therefore also $\pi _{%
\widehat{M}}\left( U\right) \diagup L_{\widehat{M}}\left( U\right) \cong $\ $%
\pi _{\widehat{M}}\left( V\right) \diagup L_{\widehat{M}}\left( V\right) $.
If we now observe that $\pi _{\widehat{M}}\left( U\right) =\pi _{\widehat{M}%
}\left( \pi _{\widehat{M_{\kappa }}}\left( U\right) \right) =\pi _{\widehat{M%
}}\left( V\right) $ and $L_{\widehat{M}}\left( U\right) \subseteq L_{%
\widehat{M}}\left( V\right) $, we deduce from this last isomorphism that $L_{%
\widehat{M}}\left( U\right) =L_{\widehat{M}}\left( V\right) $.

Now, the converse in theorem \ref{ssgTh} as explained in remark \ref{diagR}
makes it clear how to define an isomorphism $V\cong U$.
\end{proof}

\bigskip

\begin{condition}
\label{DC}By this proposition we may from now on assume that our subgroup $U$
of\ $Dr\tprod\limits_{i=1}^{n}A_{i}$\ under consideration contains no
non-trivial deltoid subcores.
\end{condition}

\bigskip Let now be given a partition of a subset of the index-set of the
original direct sum, i.e. $\Lambda =M\sqcup N$.

\textbf{As }$L_{\Lambda }=\left( Dr\tprod\limits_{i\in \Lambda }A_{i}\right)
\cap U$\textbf{, our theorem \ref{ssgTh} is applicable to the subgroup }$%
L_{\Lambda }$\textbf{\ of }$Dr\tprod\limits_{i\in \Lambda }A_{i}$\textbf{,
while the definition of }$L_{M}$\textbf{, }$L_{N}$\textbf{\ as subgroups of }%
$L_{\Lambda }\leq Dr\tprod\limits_{i\in \Lambda }A_{i}$\textbf{\ still
remains unchanged inside }$U\leq A_{1}\times A_{2}\times ...\times A_{n}$%
\textbf{\ (since they were already subgroups of }$Dr\tprod\limits_{i\in
\Lambda }A_{i}$\textbf{\ inside }$Dr\tprod\limits_{i=1}^{n}A_{i}$\textbf{);
hence, by that theorem, at any rate is }$L_{M}\times L_{N}\trianglelefteq $%
\textbf{\ }$L_{\Lambda }$\textbf{, while equality here would mean }$\pi
_{M}\left( L_{\Lambda }\right) =L_{M}$\textbf{\ and, equivalently, }$\pi
_{N}\left( L_{\Lambda }\right) =L_{N}$\textbf{, since in this case the
groups of the isomorphism }$\sigma $\textbf{\ in theorem \ref{ssgTh} are
trivial. }In this connection it is important to notice that, considering a
subgroup $U$ of $Dr\tprod\limits_{i=1}^{n}A_{i}$ in the case that $U$ 's
projection on some of the direct factors $A_{i}$ is trivial makes our
analysis too blurry and useless, by short-circuiting it in effect at a
trivial level; consequently one should have to exclude at least those direct
factors $A_{i}$, on which $U$'s projection is trivial, by taking the
subkernel that corresponds to the direct factors, on which the projection of 
$U$\ is non-trivial. Further, analyzing all subkernels of $U$, could take us
closer to a diagrammatic representation of $U$'s structure - which at any
rate is limited by the (complexity of the) structure of the direct factors $%
A_{i}$\ themselves.

-\textit{\ In elementary terms, the condition }$L_{M}\times L_{N}=$\textit{\ 
}$L_{\Lambda }$\textit{\ means (by theorem \ref{ssgTh}, applied on }$%
L_{\Lambda }$\textit{)\ that, for any }$x\in L_{\Lambda }$\textit{, the
element }$\pi _{M}\left( x\right) $\textit{\ of }$Dr\tprod\limits_{i\in
M}A_{i}\ $\textit{also belongs to }$L_{M}$\textit{\ - or, equivalently, }$%
\pi _{N}\left( x\right) \in L_{N}$\textit{. By now viewing }$L_{\Lambda }$ 
\textit{as a subgroup of} $Dr\tprod\limits_{i\in \Lambda }A_{i}$, while
forgetting for a moment about the original $U\leq A_{1}\times A_{2}\times
...\times A_{n}$, we get the following

\begin{lemma}
\label{L12}For $U\leq A_{1}\times A_{2}\times ...\times A_{n}$, $\varnothing
\neq M\subset \left\{ 1,...,n\right\} $, $M\neq \left\{ 1,...,n\right\} $,
the condition$\ \pi _{M}\left( U\right) \leq U$ implies the following (by
theorem \ref{ssgTh} equivalent) facts:

$\pi _{M}\left( U\right) =L_{M}$, $\pi _{\widehat{M}}\left( U\right) =L_{%
\widehat{M}}$ , $U=L_{M}\times L_{\widehat{M}}$
\end{lemma}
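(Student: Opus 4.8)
The plan is to prove the first equality $\pi _{M}\left( U\right) =L_{M}$ directly from the hypothesis, and then to read off the remaining two from theorem 5 together with the elementary facts about the $L$'s recorded before Definition 2.

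The observation driving everything is that any $\overline{x}\in U$ carrying a $1$ in at least one coordinate touches $\left( 1,\ldots ,1\right) $ at that coordinate, so $\overline{x}\sim \left( 1,\ldots ,1\right) $ and hence $\overline{x}\in I$. Since $M\neq \left\{ 1,\ldots ,n\right\} $, every element of $\pi _{M}\left( U\right) $ (viewed inside $A$, with $1$'s in the $\widehat{M}$-coordinates) carries $1$'s in a non-empty set of coordinates; combined with the hypothesis $\pi _{M}\left( U\right) \leq U$ this gives $\pi _{M}\left( U\right) \subseteq I$, and since $\pi _{M}\left( U\right) \subseteq \prod_{i\in M}A_{i}$ by construction we obtain $\pi _{M}\left( U\right) \subseteq \left( \prod_{i\in M}A_{i}\right) \cap I=L_{M}$. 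For the reverse inclusion, any $\overline{x}\in L_{M}$ lies in $U$ (as $L_{M}\leq I\leq U$) and satisfies $\pi _{M}\left( \overline{x}\right) =\overline{x}$, so $\overline{x}\in \pi _{M}\left( U\right) $; hence $\pi _{M}\left( U\right) =L_{M}$.

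Next I would invoke theorem 5(b) for the index set $M$ (both $M$ and $\widehat{M}$ being non-empty), which supplies the structural isomorphism $\sigma :\pi _{\widehat{M}}\left( U\right) \diagup E_{M}\ \widetilde{\rightarrow}\ \pi _{M}\left( U\right) \diagup E_{\widehat{M}}$. Using the identifications $E_{M}=L_{\widehat{M}}$ and $E_{\widehat{M}}=L_{M}$ from the discussion preceding Definition 2, this reads $\sigma :\pi _{\widehat{M}}\left( U\right) \diagup L_{\widehat{M}}\ \widetilde{\rightarrow}\ \pi _{M}\left( U\right) \diagup L_{M}$. By the first step the target is trivial, so the source is trivial, i.e. $\pi _{\widehat{M}}\left( U\right) =L_{\widehat{M}}$.

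Finally, for the product decomposition I would take an arbitrary $\overline{u}\in U$ and write $\overline{u}=\pi _{M}\left( \overline{u}\right) \pi _{\widehat{M}}\left( \overline{u}\right) $, which is legitimate because the two factors have disjoint supports. By the two equalities just obtained, $\pi _{M}\left( \overline{u}\right) \in L_{M}$ and $\pi _{\widehat{M}}\left( \overline{u}\right) \in L_{\widehat{M}}$, so $U\subseteq L_{M}L_{\widehat{M}}$; the reverse inclusion is immediate from $L_{M},L_{\widehat{M}}\leq I\leq U$. Since $M\cap \widehat{M}=\varnothing $ forces $L_{M}\cap L_{\widehat{M}}=1$ with the two subgroups commuting elementwise (the remark preceding the definition of $\pi _{M}$), the product $L_{M}L_{\widehat{M}}$ is direct, giving $U=L_{M}\times L_{\widehat{M}}$. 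The only genuinely delicate point is the bookkeeping in the middle step: one must correctly match the kernels $E_{M},E_{\widehat{M}}$ appearing in theorem 5(b) with $L_{\widehat{M}},L_{M}$ and keep track of which side of $\sigma $ has become trivial; once the identifications are set straight, the asserted equivalence of the three statements is exactly the content of $\sigma $ being an isomorphism.
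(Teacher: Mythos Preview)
Your proof is correct and follows essentially the same approach as the paper. The paper's own proof is extremely terse: it observes that $M$ being proper and $\pi_{M}(U)\leq U$ force $\pi_{M}(U)\leq I$, hence $\pi_{M}(U)=L_{M}$, and then defers the remaining two equalities to the parenthetical ``(by theorem 5 equivalent)'' in the statement and to the discussion immediately preceding the lemma; you have simply made those deferred steps explicit, including the identification $E_{M}=L_{\widehat{M}}$, $E_{\widehat{M}}=L_{M}$ and the triviality of $\sigma$.
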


\begin{proof}
\bigskip Enhance the preceding discussion with the remark, following from
the definition of the core $I$ of $U$, that $M$ being a proper subset of $%
\left\{ 1,...,n\right\} $ immediately means that the relation $\pi
_{M}\left( U\right) \leq U$\ implies $\pi _{M}\left( U\right) \leq I(U)$,
which then forces\ $\pi _{M}\left( U\right) =L_{M}$.
\end{proof}

- We are now pointing out a relevant implication of part (a) of theorem \ref%
{ssgTh}: in order to prove, on the contrary, that $L_{M}\times L_{N}\neq $ $%
L_{\Lambda }$, it is enough just to find one $x\in L_{\Lambda }$, with the
property that $\pi _{M}\left( x\right) \notin L_{\Lambda }\ \ $(or,
equivalently, $\notin $ $L_{M}$)!

\begin{lemma}
The cohesive components of $I$ intersect each other trivially.
\end{lemma}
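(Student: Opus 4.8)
The plan is to reduce the statement to index-disjointness of the cohesive components and then invoke the fact, already recorded in the generic-case section, that $L_M\cap L_N=1$ whenever $M\cap N=\varnothing$. Concretely, I would let $L_{\Lambda_1}$ and $L_{\Lambda_2}$ be two distinct cohesive components, i.e. maximal cohesive among the groups $L_\Lambda$. Since each $L_\Lambda$ consists exactly of the elements of $I$ supported on the coordinates in $\Lambda$, an element lying in both $L_{\Lambda_1}$ and $L_{\Lambda_2}$ is supported on $\Lambda_1\cap\Lambda_2$, so $L_{\Lambda_1}\cap L_{\Lambda_2}=L_{\Lambda_1\cap\Lambda_2}$ (bookkeeping of supports, of the same "quite clear" kind as in Lemma 8). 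Thus it suffices to prove that distinct components satisfy $\Lambda_1\cap\Lambda_2=\varnothing$, whence $L_{\Lambda_1}\cap L_{\Lambda_2}=L_\varnothing=1$.

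I would argue by contradiction, assuming $\Delta:=\Lambda_1\cap\Lambda_2\neq\varnothing$ and setting $\Lambda:=\Lambda_1\cup\Lambda_2$. The heart of the argument is to show that $L_\Lambda$ is itself cohesive, which will contradict the maximality of $L_{\Lambda_1}$ and $L_{\Lambda_2}$. Suppose toward this that $\Lambda=M\cup N$ is a nontrivial partition with $L_\Lambda=L_M\times L_N$. Using the elementary reformulation recorded just before Lemma 12 — that this direct decomposition is equivalent to $\pi_M(x)\in L_M$ for every $x\in L_\Lambda$ — I would restrict the decomposition to $L_{\Lambda_1}\subseteq L_\Lambda$: for $x\in L_{\Lambda_1}$ the element $\pi_M(x)$ is supported inside $\Lambda_1\cap M$ and lies in $L_M\subseteq I$, hence in $L_{\Lambda_1\cap M}$; likewise $\pi_N(x)\in L_{\Lambda_1\cap N}$, while $x=\pi_M(x)\,\pi_N(x)$. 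This yields $L_{\Lambda_1}=L_{\Lambda_1\cap M}\times L_{\Lambda_1\cap N}$, and since $L_{\Lambda_1}$ is cohesive the partition $\Lambda_1=(\Lambda_1\cap M)\cup(\Lambda_1\cap N)$ must be trivial, i.e. $\Lambda_1\subseteq M$ or $\Lambda_1\subseteq N$; the same reasoning gives $\Lambda_2\subseteq M$ or $\Lambda_2\subseteq N$.

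A short case check then finishes the cohesiveness of $L_\Lambda$: if $\Lambda_1$ and $\Lambda_2$ land on opposite sides, then $\Delta\subseteq M\cap N=\varnothing$, contradicting $\Delta\neq\varnothing$; if both land on the same side, then $\Lambda=\Lambda_1\cup\Lambda_2$ is contained in that side, so the complementary part of the partition is empty, contradicting nontriviality. Hence no such partition exists and $L_\Lambda$ is cohesive. Since $L_{\Lambda_1},L_{\Lambda_2}\subseteq L_\Lambda$ with $L_\Lambda$ cohesive, maximality forces $L_{\Lambda_1}=L_\Lambda=L_{\Lambda_2}$, contradicting that the two components are distinct. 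Therefore $\Lambda_1\cap\Lambda_2=\varnothing$, and the intersection is trivial.

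I expect the main obstacle to be the restriction step — justifying cleanly that a direct decomposition of $L_\Lambda$ along $M\cup N$ descends to a direct decomposition of the subgroup $L_{\Lambda_1}$ along $(\Lambda_1\cap M)\cup(\Lambda_1\cap N)$. The point to handle carefully is that the containment $\pi_M(x)\in L_{\Lambda_1\cap M}$ genuinely uses both the hypothesis $\pi_M(x)\in L_M$ and the fact that $\pi_M(x)$ is supported inside $\Lambda_1$, together with $L_M\subseteq I$; everything else (the equalities $L_{\Lambda_1}\cap L_{\Lambda_2}=L_{\Lambda_1\cap\Lambda_2}$ and $L_\varnothing=1$) is elementary support bookkeeping.
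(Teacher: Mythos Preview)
Your proof is correct and follows essentially the same route as the paper: reduce to index-disjointness via $L_{\Lambda_1}\cap L_{\Lambda_2}=L_{\Lambda_1\cap\Lambda_2}$, then show that if the index sets overlap, $L_{\Lambda_1\cup\Lambda_2}$ is cohesive (contradicting maximality) by restricting a hypothetical decomposition $L_{\Lambda_1\cup\Lambda_2}=L_M\times L_N$ down to each $L_{\Lambda_i}$. The paper phrases the setup with the disjoint pieces $M,N,P$ (so that $\Lambda_1=M\cup P$, $\Lambda_2=N\cup P$) and appeals tersely to Lemma~12 for the restriction step, whereas you carry out that restriction explicitly and finish with a cleaner case split; but the substance is the same.
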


\begin{proof}
\bigskip At first, notice that $M\cap N=\varnothing \Rightarrow L_{M}\cap
L_{N}=1$ and, therefore, $L_{M}\cap L_{N}\neq 1\Rightarrow M\cap N\neq
\varnothing $\ . In view of this, combined with the maximality of the
cohesive components from their definition, it will suffice to prove the
following:

"If M, N, P are mutually disjoint non empty index sets, such that $L_{M\cup
P}$,\ $L_{N\cup P}$\ be cohesive, then $L_{M\cup N\cup P}$\ is cohesive too."

\ Assume to the contrary, that there is a non-trivial decomposition $%
L_{M\cup N\cup P}=L_{R}\times L_{S}$ \textbf{(1)} ($R$, $S$ non-empty, $%
R\cap S=\varnothing $, $R\cap S=M\cup N\cup P$.

On account of lemma \ref{L12}, cohesiveness of $L_{M\cup P}$,\ $L_{N\cup P}$
implies that neither $R$ nor $S$ may be contained in either $M\cup P$ or $%
N\cup P$, which in turn means that $R$ as well as $S$ have non-trivial
intersections with $M$ and $N$. So, by means of of lemma \ref{L12}, we get
through (1) a non-trivial decomposition of the subgroups $L_{M\cup P}$,\ $%
L_{N\cup P}$ of $L_{M\cup N\cup P}$, contrary to their cohesiveness.\ \ 
\end{proof}

\begin{proposition}
\label{cohD}\bigskip\ There is always a unique (up to ordering of factors)
decomposition\ $I=L_{N_{1}}\times ...\times L_{N_{m}}$\ of the core I as the
product of its cohesive components; this will\ be referred to as \textbf{the 
}(total)\textbf{\ cohesion decomposition} of the core $I.$
\end{proposition}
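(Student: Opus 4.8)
The plan is to show first that the index sets of the blocks partition $\{1,\dots,n\}$, then that the product of the blocks is automatically direct, and finally---the real point---that this direct product exhausts $I$; uniqueness will then follow from the canonical nature of the blocks.

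First I would record that every singleton $L_{\{i\}}=L_i$ is (vacuously) cohesive, and that the amalgamation statement proved inside Lemma 13---if $M,N,P$ are mutually disjoint and $L_{M\cup P}$, $L_{N\cup P}$ are cohesive then $L_{M\cup N\cup P}$ is cohesive---lets me define an equivalence relation on $\{1,\dots,n\}$ by declaring $i\approx j$ whenever $i,j$ lie in a common cohesive index set. Reflexivity and symmetry are clear, and transitivity is exactly the amalgamation statement (two cohesive sets sharing an index glue to a cohesive one). The classes $N_1,\dots,N_m$ are then the maximal cohesive sets, i.e. the blocks: each class is cohesive (build it up by adjoining overlapping cohesive sets, applying amalgamation at each step) and maximal (any cohesive set meeting it lies inside it). These classes are pairwise disjoint---this is also forced directly by Lemma 13---and cover $\{1,\dots,n\}$, giving the partition. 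Since the $N_j$ have pairwise disjoint support, the subgroups $L_{N_j}$ pairwise centralize one another and meet trivially (as $M\cap N=\varnothing$ gives $L_M\cap L_N=1$), so the product $L_{N_1}\cdots L_{N_m}$ is automatically an internal direct product $L_{N_1}\times\cdots\times L_{N_m}\leq L_{\{1,\dots,n\}}=I$.

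The crux is the reverse inclusion $I\subseteq L_{N_1}\times\cdots\times L_{N_m}$. Here I would isolate a refinement lemma: if $\Lambda$ is a union of blocks and $L_\Lambda=L_M\times L_N$ is any non-trivial splitting, then each block contained in $\Lambda$ lies wholly in $M$ or wholly in $N$, so that $M$ and $N$ are themselves unions of blocks. This follows by intersecting with $A_{N_j}:=Dr\prod_{i\in N_j}A_i$ for a block $N_j\subseteq\Lambda$: a componentwise-support computation gives $L_{N_j}=A_{N_j}\cap(L_M\times L_N)=L_{M\cap N_j}\times L_{N\cap N_j}$, and cohesiveness of $L_{N_j}$ forces one factor to be trivial, i.e. $N_j\subseteq M$ or $N_j\subseteq N$. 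With this in hand I would prove, by induction on the number of blocks contained in $\Lambda$, that $L_\Lambda=\prod_{N_j\subseteq\Lambda}L_{N_j}$ for every union of blocks $\Lambda$. The base case is a single block. For the step, a union of at least two blocks cannot itself be cohesive (its indices are not all $\approx$-equivalent), so by Theorem 5, in the form of the boldface remark following Definition 11, it admits a non-trivial splitting $L_\Lambda=L_M\times L_N$; the refinement lemma makes $M,N$ unions of strictly fewer blocks, the induction hypothesis factors $L_M$ and $L_N$, and multiplying gives the claim. Taking $\Lambda=\{1,\dots,n\}$, which is the union of all blocks, yields $I=L_{N_1}\times\cdots\times L_{N_m}$.

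Finally, uniqueness up to order is immediate: the blocks are the maximal cohesive sets, an intrinsic invariant of the inclusion $U\leq A_1\times\cdots\times A_n$, so any two block decompositions agree up to reordering of the factors; conversely, as in the remark following Definition 9, any direct decomposition $I=\prod_k L_{\Lambda_k}$ with each $L_{\Lambda_k}$ cohesive must have each $\Lambda_k$ maximal cohesive, hence equal to some $N_j$. I expect the main obstacle to be precisely the reverse inclusion, and within it the refinement lemma---the verification that a non-trivial splitting of a union of blocks respects the block partition; the directness of the product and the existence of the partition are comparatively routine consequences of the disjoint-support relations and Lemma 13.
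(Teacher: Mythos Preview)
Your proof is correct, and considerably more detailed than the paper's. The paper's own argument is a three-line top-down recursion: start with $I=L_{\{1,\dots,n\}}$; if it is cohesive, stop; otherwise split it as $L_M\times L_N$ and recurse on each factor until all factors are cohesive; then invoke Lemma~13 (cohesive components intersect trivially) to declare the resulting factors are the blocks and the decomposition unique. No equivalence relation, no refinement lemma, no explicit verification that the recursion lands on \emph{maximal} cohesive pieces---that last point is tacitly folded into the appeal to Lemma~13.

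Your route is structurally different: you first build the block partition bottom-up via the relation $i\approx j$ (using the amalgamation step isolated inside the proof of Lemma~13), and only then run the splitting argument, with your refinement lemma guaranteeing at each step that the pieces $M,N$ remain unions of blocks. What this buys you is that the induction visibly terminates at the blocks themselves, so existence and the identification of the factors come together, and uniqueness is genuinely immediate rather than deferred. The paper's approach is quicker to state but leaves the reader to reconstruct why the terminal cohesive factors of an arbitrary splitting sequence must coincide with the maximal cohesive components; your refinement lemma is exactly the missing ingredient that makes this transparent. One small wording point: in the refinement lemma you say cohesiveness of $L_{N_j}$ ``forces one factor to be trivial''---what cohesiveness literally forces is that the \emph{partition} $N_j=(M\cap N_j)\cup(N\cap N_j)$ is trivial, i.e.\ one side is empty, which is the conclusion you want; a trivial factor alone would not suffice, since $L_S$ can be trivial for nonempty $S$.
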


\begin{proof}
If $I=L_{1...n}$ is cohesive, then we are done with m=1; otherwise, we
continue examining its factors, until they cannot be any further decomposed,
meaning that they are cohesive. Uniqueness is a consequence of the previous
lemma.
\end{proof}

\ \ \ 

\begin{remark}
Given that one might have the situation $N\subset M$ ($N\neq M$) and still $%
L_{N}=L_{M}$, to a cohesion decomposition is, to begin with, not necessarily
attached a unique partition of $\left\{ 1,...,n\right\} $. To remedy that,
we agree from now on\ (unless otherwise specified) to take the maximal such
subsets of $\left\{ 1,...,n\right\} $.
\end{remark}

\begin{definition}
We shall call a subgroup $U\leq A_{1}\times A_{2}\times ...\times A_{n}$,
for $n>s+2$, an \textbf{r-weakly} \textbf{smashed} one if there exists a
partition of $\left\{ 1,...,n\right\} $\ into subsets of cardinality at
least $r$, such that for every subset $N=\left\{
i_{1},i_{2},...,i_{s}\right\} $ in the partition ($r\leq s$),\ $%
E_{_{i_{1}i_{2}...i_{s}}}$ \ is contained in (the direct product) $%
\tprod\limits_{\kappa \not\notin N}L_{\kappa }$ . (Trivially, for t%
\TEXTsymbol{<}s, \textbf{t-weak}\ smashedness also implies \textbf{s-weak}
smashedness.)
\end{definition}

For $n>2$ the condition that all $E_{ij},$ $i\neq j$,\ be trivial, of course
also implies that all $L_{i}$ 's are trivial.

\begin{lemma}
"1-weakly smashed" means for $U$ the same as "(cohesively) \textbf{smashed".}
\end{lemma}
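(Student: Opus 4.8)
The plan is to translate both notions into statements about the core $I$ and then to connect them through Proposition 3. First I would record the two target conditions. By Definition 11, $U$ is (cohesively) smashed exactly when $I=L_{1}\times L_{2}\times \cdots \times L_{n}$; by Definition 18 with $s=1$, $U$ is $1$-weakly smashed exactly when $E_{i}\subseteq \prod_{\kappa \neq i}L_{\kappa}$ for every $i\in\{1,\dots,n\}$. The decisive preliminary observation is that $\prod_{\kappa \neq i}L_{\kappa}$ is genuinely a subgroup and sits inside $L_{1}\times\cdots\times L_{n}$: the $L_{\kappa}$ are supported on pairwise disjoint single coordinates, so by the remark preceding Lemma 8 they commute elementwise and meet trivially, whence any product of them is again the internal direct product of the factors involved. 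I would also use the identification $E_{i}=I\cap\ker\pi_{i}$, that is, $E_{i}$ consists of the elements of $U$ with trivial $i$-coordinate, all of which lie in $I$ since $E_{i}$ is one of the generators of $I$.

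For the implication smashed $\Rightarrow$ $1$-weakly smashed, which is the routine direction, I would take any $e\in E_{i}=I\cap\ker\pi_{i}$. Writing $e$ according to the decomposition $I=L_{1}\times\cdots\times L_{n}$ and using $\pi_{i}(e)=1$, its $L_{i}$-component is trivial, so $e\in\prod_{\kappa\neq i}L_{\kappa}$; as $i$ and $e$ were arbitrary this is exactly $1$-weak smashedness.

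For the converse, $1$-weakly smashed $\Rightarrow$ smashed, I would invoke Proposition 3 in the form $I=E_{1}E_{2}\cdots E_{n}$. By hypothesis each generator satisfies $E_{i}\subseteq\prod_{\kappa\neq i}L_{\kappa}\subseteq L_{1}\times\cdots\times L_{n}$, and since the right-hand side is a subgroup the product $E_{1}E_{2}\cdots E_{n}$ is contained in it as well; thus $I\subseteq L_{1}\times\cdots\times L_{n}$. The reverse inclusion $L_{1}\times\cdots\times L_{n}\subseteq I$ is immediate because each $L_{\kappa}=A_{\kappa}\cap I\leq I$, so equality holds and $U$ is smashed.

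The argument is short, and the only point requiring care is the choice of structural tool: one must not expect an identity of the shape $L_{M}L_{N}=L_{M\cup N}$ (which fails, since $L_{1}\cdots L_{n}$ is in general a proper subgroup of $L_{1\dots n}=I$), but rather use the generation $I=\prod_{i}E_{i}$ from Proposition 3 together with the elementwise commutativity of the $L_{\kappa}$, which guarantees that $L_{1}\times\cdots\times L_{n}$ is a subgroup closed under exactly the products that build up $I$. This is the step I expect to be the crux.
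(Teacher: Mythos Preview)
Your proof is correct and follows essentially the same approach as the paper: for the substantive direction you use Proposition 3 in the form $I=E_{1}\cdots E_{n}$ together with $E_{i}\subseteq L_{1}\times\cdots\times L_{n}$ to obtain $I\subseteq L_{1}\times\cdots\times L_{n}$, with the reverse inclusion trivial, exactly as the paper does. You supply more detail than the paper (which dismisses the direction smashed $\Rightarrow$ $1$-weakly smashed as ``Trivial'' and compresses the other into a single line), and your closing remark about the failure of $L_{M}L_{N}=L_{M\cup N}$ is a worthwhile clarification, but the underlying argument is the same.
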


\begin{proof}
\bigskip "$\Rightarrow $": As the core $I$ is generated by the $E_{i}$ 's, $%
E_{i}$ $\subset $ $L_{1}\times ...\times L_{n}\Rightarrow I\subset
L_{1}\times ...\times L_{n}$, while the converse inclusion is trivial.

"$\Leftarrow $": Trivial.
\end{proof}

\bigskip The following theorem on smashed subdirect products is a crucial
step toward reaching to the theorem about the general case:

\begin{theorem}
\label{smTh}Let $U\leq A_{1}\times A_{2}\times ...\times A_{n}$ \ ($n\rangle
2$)\ \ be a subdirect product (i.e., $\pi _{i}\left( U\right) =A_{i}$ for
all $i$ 's) which is smashed. Then there is a (uniquely determined)
"structural" system of isomorphisms of the $A_{i}\diagup L_{i}$ 's, all
those being isomorphic to $U\diagup I=U\diagup \left( L_{1}\times
L_{2}\times ...\times L_{n}\right) $, \ in a perfect generalization of the
case n=2. This, again, amounts to realizing $U$ as\textbf{\ a fibre product}
of the $A_{i}$ 's over $R:=A_{1}\diagup L_{1}$, with respect to each $A_{i}$
's epimorphism on it, gotten by composing the canonical $A_{i}\rightarrow
A_{i}\diagup L_{i}$, with $A_{i}\diagup L_{i}\rightarrow A_{1}\diagup L_{1}$
from\ the mentioned "structural" system of isomorphisms. The converse is
again true. Also, for any\ subset of indices $i_{1}\langle i_{2}\langle
...\langle i_{s}$ from $\left\{ 1,...,n\right\} $, we have uniquely
determined structural isomorphisms $R\simeq \pi _{i_{1}i_{2}...i_{s}}\left(
U\right) \diagup \left( L_{i_{1}}\times ...\times L_{i_{s}}\right) $.
\end{theorem}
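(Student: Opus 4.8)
The plan is to reduce Theorem 20 to the already-established two-factor theorem (Theorem 1) by exploiting the hypotheses "subdirect" and "smashed," and then assemble the system of isomorphisms factor by factor. Under the subdirect hypothesis $\pi_i(U)=A_i$, and since $U$ is smashed we have $I=L_1\times L_2\times\cdots\times L_n$ with $L_i=E_{1\ldots\widehat{i}\ldots n}$. The key first observation I would record is that for the bipartition $\{i\}\cup\widehat{\{i\}}$ the relevant pieces are $E_i=L_{\widehat{\imath}}$ and $L_i=E_{\widehat{\imath}}$, so Theorem 5(b) (the $n$-factor structural isomorphism) applied to the index set $\{i\}$ furnishes a canonical isomorphism
\[
\sigma_i:\ \pi_{\widehat{\imath}}(U)\big/E_i\ \widetilde{\rightarrow}\ \pi_i(U)\big/L_i=A_i\big/L_i,
\]
equivalently (via the prolongation in 5(b)) both sides are isomorphic to $U/I$. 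This already identifies each $A_i/L_i$ with the single quotient $U/I=U/(L_1\times\cdots\times L_n)$; set $R:=U/I$ and let $\rho_i:A_i/L_i\to R$ be these isomorphisms.

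Next I would verify that the smashedness is exactly what forces the denominators $E_i$ to collapse so that $U/I$ sees each factor "cleanly." Here Lemma 18 (1-weakly smashed $=$ smashed) and Remark 19 are the workhorses: smashedness gives $I=\prod_i L_i$, so $U/I\simeq\bigl(\prod_i A_i\bigr)\big/\bigl(\prod_i L_i\bigr)\cap$-restricted to $U$, and the structural map of 5(b) for each singleton index identifies this common quotient with $A_i/L_i$. Composing, $\rho_j^{-1}\circ\rho_i:A_i/L_i\to A_j/L_j$ is the desired structural isomorphism between any two factors, and the "structural" property is inherited directly from the structural property in Theorem 5(b) — namely $\sigma_i$ sends $\pi_{\widehat{\imath}}(\overline a)E_i$ to $\pi_i(\overline a)L_i$, so an element $\overline a=(a_1,\ldots,a_n)\in U$ has all its coordinate-cosets $a_iL_i$ mutually corresponding under the system. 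This is precisely the statement that $U$ consists of the $n$-tuples whose coordinate images in $R$ agree, i.e. the fiber-product (amalgamated-over-$R$) description; the converse is then immediate by constructing $U$ as that fiber product and checking it is subdirect and smashed.

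For the final sentence — the isomorphism $R\simeq\pi_{i_1i_2\ldots i_s}(U)\big/(L_{i_1}\times\cdots\times L_{i_s})$ for an arbitrary index set $\Lambda=\{i_1,\ldots,i_s\}$ — I would again invoke Theorem 5(b), now for the index set $\Lambda$ itself. That gives
\[
\pi_{\widehat{\Lambda}}(U)\big/E_{\Lambda}\ \widetilde{\rightarrow}\ \pi_{\Lambda}(U)\big/E_{\widehat{\Lambda}}\ \widetilde{\rightarrow}\ U\big/\bigl(E_{\Lambda}\times E_{\widehat{\Lambda}}\bigr).
\]
Because $U$ is smashed, $E_{\Lambda}=\bigcap_{t\in\Lambda}E_t=\prod_{\kappa\notin\Lambda}L_\kappa$ and $E_{\widehat{\Lambda}}=\prod_{\kappa\in\Lambda}L_\kappa=L_{i_1}\times\cdots\times L_{i_s}$, so $E_\Lambda\times E_{\widehat\Lambda}=\prod_\kappa L_\kappa=I$ and the right-hand side is exactly $U/I=R$; meanwhile the middle term is $\pi_\Lambda(U)/(L_{i_1}\times\cdots\times L_{i_s})$, yielding the claimed isomorphism.

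The main obstacle I anticipate is not any single computation but rather pinning down the \emph{coherence} of the whole system of isomorphisms: one must check that the various $\sigma_i$ (and the $\Lambda$-versions) are mutually compatible, so that "$\overline a\in U$ iff its coordinate-cosets all correspond" is a single consistent condition rather than $n$ separately-chosen bijections. The cleanest way to dispatch this is to route everything through the \emph{one} canonical quotient $R=U/I$: since every structural map of Theorem 5(b) is, by that theorem, the same prolonged bijection onto $U/I$, compatibility is automatic and no independent coherence argument is needed. The only genuine care required is the bookkeeping identifying $E_\Lambda$ and $E_{\widehat\Lambda}$ with the appropriate sub-products of the $L_\kappa$'s under smashedness, which is exactly where Definition 9, Lemma 18 and Remark 19 are used.
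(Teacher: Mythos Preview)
Your argument is correct. The route you take, however, differs from the paper's in one substantive respect: for the main assertion the paper does \emph{not} invoke Theorem~5(b) factor by factor. Instead it passes to the quotient $A_i':=A_i/L_i$, observes that the image $U'$ of $U$ in $A_1'\times\cdots\times A_n'$ has all of \emph{its} $E_i$-groups trivial (this is exactly what smashedness buys after modding out by $I=L_1\times\cdots\times L_n$), and then applies the preceding Proposition~18 (the case where all $E_i$'s are trivial, forcing $U'$ to consist of $n$-tuples of corresponding elements under a coherent system of isomorphisms among the $A_i'$). Lifting back gives the structural system on the $A_i/L_i$, and the fiber-product description and its converse follow in the same stroke. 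Your approach---applying Theorem~5(b) to each singleton and routing everything through $R=U/I$---achieves the same end and has the virtue of making the coherence of the system automatic (as you note), but it bypasses the conceptual reduction ``smashed $\Rightarrow$ pass to quotient $\Rightarrow$ all $E_i$ trivial'' that the paper isolates as Proposition~18. For the final assertion about arbitrary index sets $\Lambda$, your argument and the paper's coincide: both invoke Theorem~5 and compute $E_{\widehat\Lambda}=L_\Lambda=L_{i_1}\times\cdots\times L_{i_s}$ from smashedness. (Two minor remarks: the ``1-weakly smashed $=$ smashed'' lemma is Lemma~17 in the paper's numbering, not 18; and you do not actually need it or Remark~19, since smashedness is \emph{defined} as $I=L_1\times\cdots\times L_n$ in Definition~11.)
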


\begin{proof}
Set, now, $A_{i}^{\prime }=A_{i}\diagup L_{i}$ \ and use the previous
proposition for the projection $U^{\prime }$ of $U$, as a subgroup of

$A_{1}^{\prime }\times A_{2}^{\prime }\times ...\times A_{n}^{\prime }$ \ $%
\simeq \left( A_{1}\times A_{2}\times ...\times A_{n}\right) \diagup \left(
L_{1}\times L_{2}\times ...\times L_{n}\right) =$

=$\ \left( A_{1}\times A_{2}\times ...\times A_{n}\right) \diagup I$ ; by
the component-dependent definition of the core it is clear that the core of $%
U^{\prime }$ is trivial, hence also its generating subgroups $E_{i}%
{\acute{}}%
$, therefore we may apply lemma \ref{LgTh} on $U^{\prime }$, then we lift
back to $U$.

As for the converse, we set $I=L_{1}\times L_{2}\times ...\times L_{n}$, $%
A_{i}^{\prime }=A_{i}\diagup L_{i}$, consider $\left( A_{1}\times
A_{2}\times ...\times A_{n}\right) \diagup I$\ $\simeq $\ $A_{1}^{\prime
}\times A_{2}^{\prime }\times ...\times A_{n}^{\prime }$, apply the converse
of the previous proposition\ and lift back.

Alternatively, we could again use the method of determining the "fibres of $%
n $-tuples" (turning out to be cosets of $I$ in $U$) as equivalence classes
in $U$, as we did in the case $n=2$.

As for the last assertion, it suffices to apply theorem \ref{ssgTh}, since $%
E_{1...\widehat{i_{1}}...\widehat{i_{2}}...\widehat{i_{s}}...n}=\pi
_{i_{1}i_{2}...i_{s}}\left( U\right) \cap U$ which, as it lies inside the
"equivalence class" $I\subset U$ (for s\TEXTsymbol{<}n), is the same as $\pi
_{i_{1}i_{2}...i_{s}}\left( U\right) \cap I=\pi _{i_{1}i_{2}...i_{s}}\left(
U\right) \cap \left( L_{1}\times ...\times L_{n}\right) =L_{i_{1}}\times
...\times L_{i_{s}}.$
\end{proof}

\begin{remark}
This theorem may also be viewed as a generalization, in another direction,
of the case $n=2$.
\end{remark}

\begin{corollary}
A smashed subdirect product of $A_{1}\times A_{2}\times ...\times A_{n}$ may
always be taken as a pull-back of n epimorphisms.
\end{corollary}

\begin{corollary}
If $U\leq A_{1}\times A_{2}\times ...\times A_{n}$\ such that,\ for all $%
i\in N=\left\{ i_{1},i_{2},...,i_{s}\right\} $, $E_{i}$ is contained in $%
\left( L_{i_{1}}\times ...\widehat{L}_{i}\times ...\times L_{i_{s}}\right)
\tprod\limits_{\kappa \not\notin N}E_{\kappa }$ (equivalently, just in

$\left( L_{i_{1}}\times ...\times L_{i_{s}}\right) \tprod\limits_{\kappa
\not\notin N}E_{\kappa }$), then the preceding theorem is applicable for

$\pi _{i_{1}...i_{2}...i_{s}}\left( U\right) $ as a subgroup of the direct
product of its projections on $A_{i_{1}},...,A_{i_{s}}$.
\end{corollary}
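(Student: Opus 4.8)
The goal is to show that $V:=\pi _{i_{1}i_{2}...i_{s}}\left( U\right) $, viewed as a subgroup of the direct product $\pi _{i_{1}}\left( U\right) \times \cdots \times \pi _{i_{s}}\left( U\right) $ of its own projections, is a \emph{smashed subdirect product}; Theorem 20 then applies to $V$ verbatim and yields the asserted structure, the closing isomorphism $R\simeq \pi _{i_{1}i_{2}...i_{s}}\left( U\right) \diagup \left( L_{i_{1}}\times \cdots \times L_{i_{s}}\right) $ being simply Theorem 20's last clause read off for $V$. Subdirectness is automatic, since $\pi _{i_{j}}\left( V\right) =\pi _{i_{j}}\left( U\right) $ for every $j$ by construction. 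So the entire content is the smashedness of $V$.

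First I would set up the dictionary between the invariants of $V$ and those of $U$. Writing $\pi :=\pi _{i_{1}...i_{s}}$ and denoting by $E_{i}^{V},L_{j}^{V}$ the subgroups of $V$ playing the roles of the $E$'s and $L$'s, one reads off from the definitions that, for $i\in N$, $E_{i}^{V}=\pi \left( E_{i}\right) $ (an image is trivial in a coordinate $i\in N$ exactly when it comes from an element of $U$ trivial there) and $L_{j}^{V}=\pi \left( \bigcap_{k\neq j}E_{i_{k}}\right) $; by Proposition 3 the core of $V$ is $I(V)=\pi \left( E_{i_{1}}\cdots E_{i_{s}}\right) =E_{i_{1}}^{V}\cdots E_{i_{s}}^{V}$. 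By Lemma 18 it suffices to prove $V$ is $1$-weakly smashed, i.e. $E_{i}^{V}\subseteq L_{i_{1}}^{V}\times \cdots \times L_{i_{s}}^{V}$ for every $i\in N$. Since $\ker \left( \pi \mid _{U}\right) =\bigcap _{j}E_{i_{j}}$ is contained in each $\bigcap _{k\neq j}E_{i_{k}}$ and is therefore absorbed, this is equivalent to the purely internal inclusion $E_{i}\subseteq Y$ inside $U$, where $Y:=\prod_{j\in N}\left( \bigcap_{k\neq j}E_{i_{k}}\right) $.

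Now I would feed in the hypothesis in its $\widehat{L_{i}}$-form $E_{i}\subseteq \left( L_{i_{1}}\times \cdots \widehat{L_{i}}\cdots \times L_{i_{s}}\right) \prod_{\kappa \notin N}E_{\kappa }$. Writing $e=\ell f\in E_{i}$ accordingly, the factor $\ell $ is trivial in coordinate $i$, so $f=\ell ^{-1}e$ is trivial there too; hence $f\in E_{i}\cap \prod_{\kappa \notin N}E_{\kappa }$. Because each $L_{i_{k}}$ is supported on its single coordinate $i_{k}$, we have $L_{i_{1}}\cdots \widehat{L_{i}}\cdots L_{i_{s}}\subseteq Y$, so the $L$-part is harmless and $e\in Y\Leftrightarrow f\in Y$. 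Everything thus reduces to the claim that the \textquotedblleft external\textquotedblright\ part satisfies $E_{i}\cap \prod_{\kappa \notin N}E_{\kappa }\subseteq Y$.

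This last inclusion is the step I expect to be the main obstacle, and it is where the hypothesis must do genuine work: a product of elements drawn from the external factors $E_{\kappa }$ $\left( \kappa \notin N\right) $ has, on its own, no reason to project into the \textquotedblleft fully split\textquotedblright\ subgroup $L_{i_{1}}^{V}\times \cdots \times L_{i_{s}}^{V}$ of $I(V)$, so the triviality of $f$ in coordinate $i$ is essential. To carry it out I would argue coordinate by coordinate on $v:=\pi \left( f\right) \in E_{i}^{V}\subseteq I(V)$: applying Theorem 5(a),(b) to the two-coordinate sub-products $A_{i_{k}}\times A_{i_{\ell }}$ inside $\pi _{i_{1}}\left( U\right) \times \cdots \times \pi _{i_{s}}\left( U\right) $ — whose structural splitting is unconditional, being the case $n=2$ of Theorem 1 — the vanishing in coordinate $i$ propagates and the hypothesis supplies the required one-coordinate constituents as genuine elements of $V$, letting me peel the coordinates of $N$ apart one at a time and write $v$ as a product of single-coordinate-supported elements of $V$. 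That places $v\in L_{i_{1}}^{V}\times \cdots \times L_{i_{s}}^{V}$, whence $E_{i}^{V}\subseteq \prod_{j}L_{i_{j}}^{V}$ and $I(V)=L_{i_{1}}^{V}\times \cdots \times L_{i_{s}}^{V}$. Therefore $V$ is smashed, and Theorem 20 applies to $\pi _{i_{1}...i_{s}}\left( U\right) $ as claimed.
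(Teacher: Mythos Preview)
The paper supplies no proof for this corollary, so there is no argument to compare against at the level of method. Your reduction—showing that $V=\pi_N(U)$ is a smashed subdirect product so that Theorem 20 applies—is the natural one, and your dictionary $E_i^{V}=\pi(E_i)$, $L_{i_j}^{V}=\pi\bigl(\bigcap_{k\neq j}E_{i_k}\bigr)$, together with the observation that $\ker(\pi|_U)=E_N\subseteq Y$ so that $\pi(E_i)\subseteq\pi(Y)$ is equivalent to $E_i\subseteq Y$, is correct.

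The proof breaks down precisely where you flag it. The inclusion $E_i\cap\prod_{\kappa\notin N}E_\kappa\subseteq Y$ is not established by the sketch you give. Invoking Theorem 5 or Theorem 1 on two-coordinate subproducts $A_{i_k}\times A_{i_\ell}$ only tells you that certain quotients are isomorphic; it does not produce, for a given $v=\pi(f)$, elements of $V$ supported on single coordinates and multiplying to $v$. The phrase ``the vanishing in coordinate $i$ propagates'' is an assertion, not an argument, and it is unclear how the hypothesis—which constrains only the $E_i$ for $i\in N$ and says nothing directly about $\prod_{\kappa\notin N}E_\kappa$—is supposed to enter at this stage. Nor can you fall back on Lemma 8 to force $f\in I_N\cap I_{\widehat N}$ to be trivial: the identity $I_M\cap I_{N'}=I_{M\cap N'}$ fails in general when $M\cap N'=\varnothing$ (for $n=3$ one has $I_{\{1\}}\cap I_{\{2\}}=E_1\cap E_2=E_{12}=L_3$, not $1$). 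If the corollary is to be proved along your lines, a genuine argument is needed showing why, under the hypothesis, the external contribution $\pi\bigl(\prod_{\kappa\notin N}E_\kappa\bigr)$—or at least its intersection with each $E_i^{V}$—already lies in $\prod_j L_{i_j}^{V}$. As written, the proof is incomplete at its decisive step.
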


\bigskip Now we prove the following analogue to lemmata \ref{L2} and \ref%
{LL2}:

\begin{lemma}
\label{LLL2}Let $U\leq A_{1}\times A_{2}\times ...\times A_{n}$ \ ($n\rangle
2$)\ \ be a smashed subdirect product. Then for any\ sequence of indices $%
i_{1}\langle i_{2}\langle ...\langle i_{s}$ from $\left\{ 1,...,n\right\} $,
\ $\pi _{i_{1}i_{2}...i_{s}}\left( U\right) \cong U\diagup
Dr\tprod\limits_{i\notin \left\{ i_{1},i_{2},...,i_{s}\right\} }L_{i}$. In
particular $A_{i}\cong U\diagup L_{1}\times ...\times \widehat{L}_{i}\times
...\times L_{n}$.\ \ 
\end{lemma}

\begin{proof}
Combine lemma \ref{LL2} with smashedness, which implies that ($E_{1...%
\widehat{i_{1}}...\widehat{i_{2}}...\widehat{i_{s}}...n}=$) $%
L_{i_{1}...i_{2}...i_{s}}=L_{i_{1}}\times ...\times L_{i_{s}}$, ($%
E_{i_{1}i_{2}...i_{s}}=$)$L_{1..\widehat{i_{1}}...\widehat{i_{s}}%
...n}=Dr\tprod\limits_{i\notin \left\{ i_{1},i_{2},...,i_{s}\right\} }L_{i}$%
. \ \ \ \ \ \ \ \ \ \ \ \ \ \ \ \ \ \ \ \ \ \ \ \ \ \ \ \ \ \ \ \ \ \ \ 
\end{proof}

\begin{example}
\label{ex24}\bigskip Assume that we have a normal subgroup $B=B_{1}\times
...\times B_{n}$ of a group $G$, $n\geq 2$, hence every single direct factor 
$B_{i}$ is normal in $G$. Let $G\diagup B\simeq R$.

Set $K_{i}=Dr\tprod\limits_{j\neq i}B_{j}$ , \ $\sigma
_{i}:G\twoheadrightarrow G\diagup K_{i}$, $i=1,...,n$, the natural
epimorphisms and, finally, $A_{i}=G\diagup K_{i}$, $A=Dr\tprod%
\limits_{i=1}^{n}A_{i}$.

Then we get a faithful representation $\sigma $ of G as a subdirect product
of the direct product A, as follows:

$\sigma :G\ni g\longmapsto \left( \sigma _{1}\left( g\right) ,...,\sigma
_{n}\left( g\right) \right) \in A$

It is obvious that this is a monomorphism (we are in the just following
showing its injectivity) - and we set $U=\sigma \left( G\right) \leq
A=Dr\tprod\limits_{i=1}^{n}A_{i}$. We will be using the terminology that we
have established above; it is immediate to see the following facts: \ $\pi
_{i}\left( U\right) =\sigma _{i}\left( G\right) =A_{i}$,

$L_{i}=\sigma \left( B_{i}\right) \simeq $(realizable as "=" inside A) $%
\sigma _{i}\left( B_{i}\right) =$(inside $A_{i}$) $B_{i}K_{i}\diagup
K_{i}=B\diagup K_{i}\simeq B_{i},$therefore also,$\sigma ^{-1}\left(
1\right) =\sigma ^{-1}\left( L_{1}\cap L_{2}\right) =\sigma ^{-1}\left(
L_{1}\right) \cap \sigma ^{-1}\left( L_{2}\right) =$ $B_{1}\cap B_{2}=1$\ \
\ \ (alternatively, $\sigma ^{-1}\left( 1\right) =\sigma ^{-1}\left(
\tbigcap\limits_{i=1}^{n}E_{i}\right) =\tbigcap\limits_{i=1}^{n}\sigma
^{-1}\left( E_{i}\right) =\tbigcap\limits_{i=1}^{n}K_{i}=1$ ), proving the
injectivity of \ $\sigma $\textbf{;} on the other hand, $\sigma \left(
B\right) =Dr\tprod\limits_{i=1}^{n}\sigma _{i}\left( B_{i}\right)
=Dr\tprod\limits_{i=1}^{n}L_{i}$ \ and, $\sigma $ being an isomorphism
between G and U, $U\diagup Dr\tprod\limits_{i=1}^{n}L_{i}=\sigma \left(
G\right) \diagup \sigma \left( B\right) \simeq G\diagup B\simeq R$ .\ For
any\ set of indices $i_{1}\langle i_{2}\langle ...\langle i_{s}$ from $%
\left\{ 1,...,n\right\} $, $E_{i_{1}...i_{2}...i_{s}}=\sigma \left(
K_{i_{1}}\cap K_{i_{2}}\cap ...\cap K_{i_{s}}\right) =\sigma \left(
\tprod\limits_{j\notin \left\{ i_{1},...,i_{s}\right\} }B_{j}\right) =$ $%
Dr\tprod\limits_{j\notin \left\{ i_{1},...,i_{s}\right\} }\sigma \left(
B_{j}\right) =Dr\tprod\limits_{j\notin \left\{ i_{1},...,i_{s}\right\}
}\sigma _{j}\left( B_{j}\right) =Dr\tprod\limits_{j\notin \left\{
i_{1},...,i_{s}\right\} }L_{j}$ \textbf{;} in particular,

\ \ $E_{i}=\sigma \left( K_{i}\right) $=$Dr\tprod\limits_{j\neq i}\sigma
\left( B_{j}\right) $=$Dr\tprod\limits_{j\neq i}\sigma _{j}\left(
B_{j}\right) $=$Dr\tprod\limits_{j\neq i}L_{j},$ showing that U is a smashed
subgroup of the direct product A, hence our last theorem 19 applies;
therefore, its core is just being $I=L_{1}\times L_{2}\times ...\times L_{n}$
, we have $U\diagup I$=$U\diagup Dr\tprod\limits_{i=1}^{n}L_{i}\simeq R$ and
then, according to theorem 19, also \ $\pi _{i}\left( U\right) \diagup
L_{i}\simeq R$ - a fact at which we also can arrive directly, as $\pi
_{i}\left( U\right) \diagup L_{i}$=$\left( G\diagup K_{i}\right) \diagup
\left( B_{i}K_{i}\diagup K_{i}\right) \simeq G\diagup B\simeq R$ .

Also from the same theorem, more generally $R\simeq \pi
_{i_{1}i_{2}...i_{s}}\left( U\right) \diagup L_{i_{1}}\times ...\times
L_{i_{s}},$\ for any proper subset $\left\{ i_{1},i_{2},...,i_{s}\right\}
\subset \left\{ 1,...,n\right\} $ $\ \left( i_{1}\langle i_{2}\langle
...\langle i_{s}\right) $; if we assumed that $\pi
_{i_{1}i_{2}...i_{s}}\left( U\right) \leq U$, which would immediately also
imply, due to the core I 's definition, that $\pi
_{i_{1}i_{2}...i_{s}}\left( U\right) =L_{i_{1}i_{2}...i_{s}}\left( U\right) $%
,\ then, according to theorem 5, all three isomorphic factor groups given by
it should be trivial, hence $U=L_{i_{1}i_{2}...i_{s}}\times L_{1..\widehat{%
i_{1}}...\widehat{i_{s}}...n}$, which in our case equals $L_{1}\times
...\times L_{n}$ - which, through the isomorphism $\sigma $, would then
yield that $R\simeq G\diagup B=1$ and $G=B=$ $B_{1}\times ...\times B_{n}$
(compare as well with lemma 12).
\end{example}

\begin{conclusion}
\label{sspr}Given a subgroup $B=B_{1}\times ...\times B_{n}$ of a group $G$, 
$n\geq 2$, so that every single direct factor $B_{i}$ is normal in $G$, we
get a faithful representation $\sigma $ of G as a smashed subdirect product $%
U$ of the direct product $A=Dr\tprod\limits_{i=1}^{n}G\diagup K_{i}$, where $%
K_{i}=Dr\tprod\limits_{j\neq i}B_{j}$.
\end{conclusion}

\begin{example}
As a case of particular interest for our (quite general) example, we may for
example use as $B$ one of the normal subgroups (for example, the maximal of
them, by starting off with all minimal normal subgroups, at least for $G$
finite) of \ a group $G$ given by the following theorem of R. Remak (as
well):
\end{example}

\begin{theorem}
(R. Remak) \cite{RR1} Let $B_{1},...,B_{m}$ (m\TEXTsymbol{>}0) be minimal
normal subgroups of a group G and set $B=\tprod\limits_{i=1}^{m}B_{i}$. Then
there exists a subset $\left\{ i_{1},i_{2},...,i_{n}\right\} \subset \left\{
1,...,m\right\} $, such that $B=B_{i_{1}}\times ...\times B_{i_{n}}$.
\end{theorem}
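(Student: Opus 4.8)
The plan is to build $B$ as a direct product by greedily selecting factors. First I would consider the (nonempty, since $m>0$) collection $\mathcal{S}$ of all subsets $S \subseteq \{1,\dots,m\}$ for which the product $\prod_{i \in S} B_i$ is an \emph{internal} direct product, i.e. the $B_i$ ($i \in S$) generate their product and $B_i \cap \prod_{k \in S,\, k \neq i} B_k = 1$ for each $i \in S$; any singleton trivially lies in $\mathcal{S}$. Since $\{1,\dots,m\}$ is finite, I may pick a maximal element $S = \{i_1,\dots,i_n\}$ of $\mathcal{S}$ and set $D = B_{i_1} \times \cdots \times B_{i_n}$, which is a normal subgroup of $G$, being a product of normal subgroups.

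The key step is to show $D = B$, for which it suffices to prove $B_j \leq D$ for every $j \notin S$. Here I would first record the elementary fact that, given normal subgroups with $M := \prod_{i \in S} B_i$ already an internal direct product, the enlarged product $M B_j$ is again an internal direct product precisely when $B_j \cap M = 1$; indeed, directness of $M \times B_j$ requires only $B_j \cap M = 1$ on top of the directness of $M$ already in hand. Consequently, by the maximality of $S$, for each $j \notin S$ the set $S \cup \{j\}$ fails to lie in $\mathcal{S}$, and this can only be because $B_j \cap D \neq 1$.

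Now I would invoke minimality of $B_j$: since $D$ and $B_j$ are both normal in $G$, the intersection $B_j \cap D$ is a normal subgroup of $G$ contained in $B_j$; being nontrivial, it must equal $B_j$, whence $B_j \leq D$. As $j \notin S$ was arbitrary, and the factors indexed by $S$ lie in $D$ by construction, every $B_i$ lies in $D$, so $B = \langle B_1,\dots,B_m\rangle = D = B_{i_1} \times \cdots \times B_{i_n}$, as required.

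I expect the only genuinely delicate point to be the ``extension lemma'' of the second paragraph --- making precise that directness of a product of normal subgroups is tested one new factor at a time, so that maximality forces the nontrivial intersection $B_j \cap D \neq 1$ rather than some more intricate obstruction. Everything else is a routine application of the definition of internal direct product together with the minimality of the $B_i$.
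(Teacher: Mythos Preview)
Your argument is correct and is essentially the standard textbook proof of this classical result. The ``extension lemma'' you single out is indeed fine: if $M=\prod_{i\in S}B_i$ is already direct and $B_j\cap M=1$, then for any $k\in S$ an element $x\in B_{i_k}\cap\bigl((\prod_{l\in S,\,l\neq k}B_{i_l})\,B_j\bigr)$ yields $x=mb$ with $m\in\prod_{l\neq k}B_{i_l}$ and $b\in B_j$, whence $b=m^{-1}x\in M\cap B_j=1$, so $x=m\in B_{i_k}\cap\prod_{l\neq k}B_{i_l}=1$; thus the enlarged product is still direct, and maximality forces $B_j\cap D\neq 1$ exactly as you say.

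As for comparison with the paper: there is nothing to compare. The paper merely \emph{quotes} this theorem of Remak (with a citation to \cite{RR1}) as background for an example and does not supply a proof of its own. Your write-up is therefore a self-contained addition rather than a reconstruction of anything in the text.
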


\begin{problem}
\bigskip \bigskip\ \ \ \ Conclusion \ref{sspr} may also prompt us to the
more general "inverse" problem, of \ investigating the ways to (faithfully)
represent a given group as a subdirect product; of particular interest would
be to get to non-smashed representations. We are looking at this problem in
our last subsection 4.3 here.
\end{problem}

\bigskip Some orientation on this kind of problems in general Group Theory
can be found in \cite{DHP}; it has already been addressed to since 1930 in 
\cite{RR2}.\bigskip \bigskip

\begin{theorem}
\label{gTh}\bigskip Given $U\leq A=A_{1}\times A_{2}\times ...\times A_{n}$,
let $I=L_{N_{1}}\times ...\times L_{N_{m}}$\ be the (total) cohesive
decomposition of its core $I$; denote, also, by $\pi ^{i}$, $i=1,...,m$ , \
the projection from the product $A$ to its subproduct attributed to the
subset $N_{i}$ of $\left\{ 1,...,n\right\} $. Set $R=U\diagup I$; then all
quotients $\pi ^{i}\left( U\right) \diagup L_{N_{i}}$, for $i=1,...,m$ , are
isomorphic\ to each other and to $R$ in a "structural" way, as in our
previous theorems (see theorem \ref{smTh}). $U$ may be realized as a fiber
product (pull-back) of the $\pi ^{i}\left( U\right) $\ 's ("structurally
coordinated") epimorphisms onto $R$; in other words, $U$ may be realized as
a smashed subdirect product.

Also, for any\ sequence of indices $i_{1}\langle i_{2}\langle ...\langle
i_{s}$ from $\left\{ 1,...,m\right\} $ , we have structural isomorphisms $%
R\simeq \pi ^{i_{1}i_{2}...i_{s}}\left( U\right) \diagup \left(
L_{N_{i_{1}}}\times ...\times L_{N_{i_{s}}}\right) $, where \ \ $\pi
^{i_{1}i_{2}...i_{s}}$ denotes the projection from the product $A$ to its
subproduct attributed to the subset $N_{i_{1}}\cup ...\cup N_{i_{s}}$ (a
disjoint union) of $\left\{ 1,...,n\right\} $.

Thanks to the "structural" property of the above isomorphisms to $R$, the
statement is also here true, meaning that we may restore $U$ from the
following data: A partition $\left\{ 1,...,n\right\}
=\tbigcup\limits_{i=1}^{m}N_{i}$, subgroups $W_{i}\leq $\ $%
Dr\tprod\limits_{j\in N_{i}}A_{j}$, a normal adhesive (inside $%
Dr\tprod\limits_{j\in N_{i}}A_{j}$) subgroup $L_{N_{i}}$ of $W_{i}$, such
that all of \ $W_{i}\diagup L_{N_{i}}$\ be isomorphic to $R:=W_{m}\diagup
L_{N_{m}}$,\ together with a sequence $\left( \sigma _{1},...,\sigma
_{m-1}\right) $ of "structural isomorphisms" $\sigma _{i}:W_{i}\diagup
L_{N_{i}}\longrightarrow R$, which shall determine the "adhesive fibres" of
a subgroup $U$,$\ $having core $\ I=L_{N_{1}}\times ...\times L_{N_{m}}\ $%
and such that $\pi ^{i}\left( U\right) =W_{i}$.
\end{theorem}

\begin{proof}
\bigskip Thanks to the commutativity amongst the factors $%
A_{1},A_{2},...,A_{n}$ of the direct product A, we may rearrange them in an
order that fits into the sequence $N_{1},...,N_{m}$ of our partition of $%
\left\{ 1,...,n\right\} $ and renumber; then, by setting $B_{\kappa
}=Dr\tprod\limits_{j\in N_{\kappa }}A_{j}$, $\kappa =1,...,m$, we get $%
A=B_{1}\times B_{2}\times ...\times B_{m}$, indeed a smashed product,
whereupon we now may apply our theorem 11 and get exactly what we are
looking for.
\end{proof}

\bigskip This last theorem \ref{gTh} is a generalization of theorem \ref%
{smTh}; this may also be fruitfully combined with theorem \ref{ssgTh}.

Notice that the cohesive decomposition of the core involved in the theorem
may also be chosen to be an arbitrary one, i.e. not necessarily the total
but a coarser one.

Now we may also give the full generalization of lemmata \ref{L2}, \ref{LL2}
and \ref{LLL2}, which again follows from the totally smashed case of this
last lemma \ref{LLL2}:

\begin{lemma}
\label{LLLL2}\bigskip Same situation as in theorem \ref{gTh}; then for any\
sequence of indices $i_{1}\langle i_{2}\langle ...\langle i_{s}$ from $%
\left\{ 1,...,m\right\} $, \ $\pi ^{^{i_{1}i_{2}...i_{s}}}\left( U\right)
\cong U\diagup Dr\tprod\limits_{i\notin \left\{
i_{1},i_{2},...,i_{s}\right\} }L_{N_{i}}$. In particular $\pi ^{i}\left(
U\right) \cong U\diagup L_{1}\times ...\times \widehat{L}_{i}\times
...\times L_{n}$.\ 
\end{lemma}

\begin{remark}
\bigskip \label{gdiaG}Theorem \ref{gTh} allows us to make diagrammatic
depictions here, analogous to that of remark \ref{diagR}. In this case\ we
may depict the subgroup $U$ diagrammatically as $%
\begin{array}{c}
\diagup _{...}\diagdown%
\end{array}%
$ - with $m$ edges $W_{i}$, $i=1,...,m$, with "socles" $L_{N_{i}}$\ and
heads isomorphic to $R$, where the "structural isomorphisms" of the heads to 
$R$\ shall be needed to restore the "adhesive fibres" in \ $%
Dr\tprod\limits_{i=1}^{m}W_{i}$, which define $U$\ set-theoretically. Its
group structure is then obtained from that of $Dr\tprod%
\limits_{i=1}^{m}W_{i} $.
\end{remark}

\subsection{\protect\bigskip Subdirect $AutR$-classes}

\bigskip In what follows in this subsection we start off at our last theorem %
\ref{gTh}; however we could equally well apply this theory in the situation
theorem \ref{ssgTh}, as already mentioned in our remark \ref{gdiaG}; on the
other hand theorem 1 is a special case of the a cohesive decomposition as in
theorem \ref{gTh}.

Let us so just adopt the notation of theorem \ref{gTh}.

We shall denote by $\mathfrak{P}^{m-1}\left( AutR\right) $ the set of
equivalence classes in $\left( AutR\right) ^{m}$\ under the following
relation: $\left( \sigma _{1},...,\sigma _{m}\right) \thicksim \left( \tau
_{1},...,\tau _{m}\right) $\ iff there exists $\rho \in AutR:\left( \sigma
_{1}\rho ,...,\sigma _{m}\rho \right) $=$\left( \tau _{1},...,\tau
_{m}\right) $. We may also define multiplication in $\left( AutR\right) ^{m}$
in the obvious way, and it is then equally apparent that left multiplication
by another element preserves equivalence of two elements.

\textbf{We need also to identify all }$m$\textbf{\ factor groups }$\pi
^{i}\left( U\right) \diagup L_{N_{i}}$\textbf{\ with }$R$ through some 
\textbf{fixed isomorphisms:} we allow ourselves further a notational
convention, that we may w.r.t. the\ canonical epimorphisms $\xi _{i}:\pi
^{i}\left( U\right) \twoheadrightarrow \pi ^{i}\left( U\right) \diagup
L_{N_{i}}\cong R$\ identify the preimage $\xi _{1}^{-1}\left( r\right) $\ \
of some $r\in R$\ as $rL_{N_{i}}$, which may not cause any confusion,
inasmuch as these coset representatives "$r$" in different $\pi ^{i}\left(
U\right) $ shall never interact with one another. We shall then remember
that, whenever considering the fixed representatives (transversals) "$r$" in
different $\pi ^{i}\left( U\right) $, they do only make a group when
considered modulo $L_{N_{i}}$. This identification may be viewed as a step
toward the idea of "virtuality"/"virtual category".

It is immediate to check through the universal property of a fiber product
that the fiber products $\left\{ \left( \xi _{i},i=1,...,m\right) ;R\right\} 
$ \ and $\left\{ \left( \rho \circ \xi _{i},i=1,...,m\right) ;R\right\} $\
for any $\rho \in AutR$\ are identical; on the other hand we may also look
directly into these fiber products as subsets (and subgroups) of $%
\tprod\limits_{i=1}^{m}\pi ^{i}\left( U\right) \leq A=A_{1}\times
A_{2}\times ...\times A_{n}$, by looking at their adhesive fibres:

We can immediately see that composing all the $m$ canonical epimorphisms $%
\xi _{i}:\pi ^{i}\left( U\right) \twoheadrightarrow \pi ^{i}\left( U\right)
\diagup L_{N_{i}}$\ \textit{(and similarly any other set of epimorphisms }$%
\sigma _{i}\circ \xi _{i}:\pi ^{i}\left( U\right) \twoheadrightarrow \pi
^{i}\left( U\right) \diagup L_{N_{i}}$\textit{\ for some isomorphisms }$%
\sigma _{i}$ of $\pi ^{i}\left( U\right) \diagup L_{N_{i}}\cong R$\textit{)}%
\ with a $\rho \in AutR$\ \textit{from the left} and then taking the
corresponding fiber-product for $\left\{ \left( \rho \circ \xi
_{i},i=1,...,m\right) ;R\right\} $ results in precisely the same subgroup,
as the one gotten as the original pull-back $U$ of $\left\{ \left( \xi
_{i},i=1,...,m\right) ;R\right\} $ over $R$, inasmuch as it gives precisely
the same adhesive fibres: The connective bundles that constitute the
original fiber product $U$ are precisely $\left\{ \left( \xi _{1}^{-1}\left(
r\right) ,...,\xi _{m}^{-1}\left( r\right) \right) ,r\in R\right\} $ which
by the notational convention mentioned above may also be described as$\
\left\{ \left( rL_{N_{1}},...,rL_{N_{m}}\right) ,r\in R\right\} $, wherein
we are also recalling our standard notation around $U$ (see theorem \ref{gTh}%
). By letting $\overline{\rho }$=$\left( \rho ,...,\rho \right) \in \left(
AutR\right) ^{m}$\ act on the $m$ canonical epimorphisms $\xi _{i}$:$\pi
^{i}\left( U\right) \twoheadrightarrow \pi ^{i}\left( U\right) \diagup
L_{N_{i}}$\ through composition as above, we now get the new fibre product
as a subgroup of $\tprod\limits_{i=1}^{m}\pi ^{i}\left( U\right) $,
consisting of the adhesive fibres

$\left\{ \left( \left( \rho \circ \xi _{1}\right) ^{-1}\left( r\right)
,...,\left( \rho \circ \xi _{m}\right) ^{-1}\left( r\right) \right) ,r\in
R\right\} $=

=$\left\{ \left( \xi _{1}^{-1}\left( \rho ^{-1}\left( r\right) \right)
,...,\xi _{m}^{-1}\left( \rho ^{-1}\left( r\right) \right) \right) ,r\in
R\right\} $=

=$\left\{ \left( \xi _{1}^{-1}\left( r^{\rho ^{-1}}\right) ,...,\xi
_{m}^{-1}\left( r^{\rho ^{-1}}\right) \right) ,r\in R\right\} $ which by
substituting $r\rightarrow r^{\rho }$ is rewritten as

$\left\{ \left( \xi _{1}^{-1}\left( r\right) ,...,\xi _{m}^{-1}\left(
r\right) \right) ,r\in R\right\} $=$\left\{ \left(
rL_{N_{1}},...,rL_{N_{m}}\right) ,r\in R\right\} $ which is as a set
precisely $U$, and with the same multiplication. $\ \ \ \ \ \ \ \ \ \ \ \ \ $

We may also more generally let any $\overline{\sigma }=\left( \sigma _{i}%
\text{, }i=1,...,m\right) \in $ $\left( AutR\right) ^{m}$\ act on $%
\tprod\limits_{i=1}^{m}\pi ^{i}\left( U\right) \diagup L_{N_{i}}$, thus
yielding a new subdirect product in $\tprod\limits_{i=1}^{m}\pi ^{i}\left(
U\right) $, out of the original one $U$, obtained as the fiber product $U^{%
\overline{\sigma }}$:$\left\{ \left( \sigma _{i}\circ \xi
_{i},i=1,...,m\right) ;R\right\} $ over $R$: The resulting $U^{\overline{%
\sigma }}$\ consists of the adhesive fibres

$\left\{ \left( \left( \sigma _{1}\circ \xi _{1}\right) ^{-1}\left( r\right)
,...,\left( \sigma _{m}\circ \xi _{m}\right) ^{-1}\left( r\right) \right) 
\text{, }r\in R\right\} $=\ 

=$\left\{ \left( \xi _{1}^{-1}\left( r^{\sigma _{1}^{-1}}\right) ,...,\xi
_{m}^{-1}\left( r^{\sigma _{m}^{-1}}\right) \right) \text{, \ }r\in
R\right\} $=$\left\{ \left( r^{\sigma _{1}^{-1}}L_{N_{1}},...,r^{\sigma
_{m}^{-1}}L_{N_{m}}\right) ,r\in R\right\} $; had we acted with its "$%
\thicksim $"-equivalent m-tuple \ $\overline{\tau }$=$\left( \tau
_{1},...,\tau _{m}\right) $=$\left( \sigma _{1}\rho ,...,\sigma _{m}\rho
\right) $ of\ $R$-automorphisms above, we would again get the exactly same
set of adhesive fibres, now described as $\left\{ \left( r^{\tau
_{1}^{-1}}L_{N_{1}},...,r^{\tau _{m}^{-1}}L_{N_{m}}\right) ,r\in R\right\} $=

=$\left\{ \left( \left( r^{\rho ^{-1}}\right) ^{\sigma
_{1}^{-1}}L_{N_{1}},...,\left( r^{\rho ^{-1}}\right) ^{\sigma
_{m}^{-1}}L_{N_{m}}\right) ,r\in R\right\} $ which, by substituting $%
r\rightarrow r^{\rho }$ is rewritten as $\left\{ \left( r^{\sigma
_{1}^{-1}}L_{N_{1}},...,r^{\sigma _{m}^{-1}}L_{N_{m}}\right) ,r\in R\right\} 
$; thus we have obtained an equality between the subgroups $U^{\overline{%
\tau }}$ and $U^{\overline{\sigma }}$. Conversely, if we had $U^{\overline{%
\sigma }}$=$U^{\overline{\tau }}$ for some $\overline{\sigma }=\left( \sigma
_{i}\text{, }i=1,...,m\right) $, $\overline{\tau }$=$\left( \tau
_{1},...,\tau _{m}\right) \in $ $\left( AutR\right) ^{m}$,\ then their
corresponding sets of adhesive fibres must be identical, i.e.

$\left\{ \left( r^{\sigma _{1}^{-1}}L_{N_{1}},...,r^{\sigma
_{m}^{-1}}L_{N_{m}}\right) ,r\in R\right\} $=$\left\{ \left( r^{\tau
_{1}^{-1}}L_{N_{1}},...,r^{\tau _{m}^{-1}}L_{N_{m}}\right) ,r\in R\right\} $%
, from which we conclude the existence of a unique bijection $\rho
:R\rightarrow R$ such that $\left\{ \left( r^{\tau
_{1}^{-1}}L_{N_{1}},...,r^{\tau _{m}^{-1}}L_{N_{m}}\right) ,r\in R\right\} $=%
$\left\{ \left( \left( r^{\rho ^{-1}}\right) ^{\sigma
_{1}^{-1}}L_{N_{1}},...,\left( r^{\rho ^{-1}}\right) ^{\sigma
_{m}^{-1}}L_{N_{m}}\right) ,r\in R\right\} $, where this last expression is
the fiber product of $\left\{ \left( \sigma _{i}\circ \rho \circ \xi
_{i},i=1,...,m\right) ;R\right\} $\ over $R$, while the first is the one of $%
\left\{ \left( \tau _{i}\circ \xi _{i},i=1,...,m\right) ;R\right\} $\ over $%
R $, forcing $\sigma _{i}\circ \rho $=$\tau _{i}$. That $\rho $ is
homomorphic modulo $I$\ follows from the multiplication rules of the
adhesive fibres.

These findings amount to the following

\begin{proposition}
\label{tPr}In the above described way, two $m$-tuples $\overline{\tau }$, $%
\overline{\sigma }\in \left( AutR\right) ^{m}$ give rise to the same
subgroup if and only if the $m$-tuples $\overline{\rho }$, $\overline{\sigma 
}$ of automorphisms of $R$ are equivalent under the introduced "right
projective" equivalence relation "$\thicksim $". The so determined action of
such a $\overline{\sigma }$\ on $\tprod\limits_{i=1}^{m}\pi ^{i}\left(
U\right) \diagup L_{N_{i}}$ may be realized by changing the coordinated
structural isomorphisms of theorem \ref{gTh} all the way through, a change
effectuated by "twisting" every $\pi ^{i}\left( U\right) \diagup L_{N_{i}}$\
by $\sigma _{i}^{-1}$.\ By denoting the $\left( AutR\right) ^{m}$-orbit of $%
U $ as $\mathfrak{P}\left( U\right) $, we do so finally get an induced
faithful and transitive action of $\mathfrak{P}^{m-1}\left( AutR\right) $\
on $\mathfrak{P}\left( U\right) $, where by $\mathfrak{P}^{m-1}\left(
AutR\right) $\ we mean $\left( AutR\right) ^{m}\diagup \thicksim $ .
\end{proposition}

\bigskip We point out that the twistings above establish the new
"alignments", that determine the new adhesive fibres that constitute $U^{%
\overline{\sigma }}$\ out of those of the original $U$.

\begin{proposition}
Let $\overline{\sigma }=\left( \sigma _{1},...,\sigma _{m}\right) \in \left(
AutR\right) $, where $\sigma _{1}=id_{R}$, and let $\Sigma =\left\langle
\sigma _{2},...,\sigma _{m}\right\rangle $; then $U^{\overline{\sigma }}\cap
U$ consists of the $R^{\Sigma }$-adhesive fibres in $U$, where $R^{\Sigma }$%
\ is the subgroup of $R$, consisting of the $\Sigma $-fixed points on it -
that is $U^{\overline{\sigma }}\cap U=\left\{ \left(
rL_{N_{1}},...,rL_{N_{m}}\right) ,\text{ }r\in R^{\Sigma }\right\} $, by
using our notation explained above. In particular, all groups in the $%
\mathfrak{P}^{m-1}\left( AutR\right) $-orbit $\mathfrak{P}\left( U\right) $
of $U$ contain the core $I=L_{N_{1}}\times ...\times L_{N_{m}}$ of the
original $U$.
\end{proposition}

\begin{proof}
\bigskip It follows from our discussion above, by comparing the adhesive
fibres of $U^{\overline{\sigma }}$ and $U$.
\end{proof}

\bigskip It is clear that every equivalence class in $\left( AutR\right)
^{m} $\ contains a representative of the form of $\overline{\sigma }$\ in
the proposition above, i.e. with the first component equal to $id_{R}$:\ We
shall call it its \textbf{(1-)canonical representative;} further we shall
call the subgroup $\Sigma $\ of $AutR$ generated by all components of the
canonical representative its breadth group. We could have defined
corresponding breadth groups by demanding the i-th automorphism to be
trivial instead; it is nonetheless immediate to check that \textit{any of
these choices results in the same breadth group.}

We want next to examine, whether we can determine \textbf{conditions to
ensure the existence of a homomorphism }$\alpha $\textbf{\ of }$U$\textbf{,
coinduced by }$\overline{\sigma }=\left( \sigma _{1},...,\sigma _{m}\right) $%
\textbf{:} i.e. which acts trivially on the core $I$\ of $U$\ and induces $%
\sigma _{i}$\ on $\pi ^{i}\left( U\right) \diagup L_{N_{i}}$: such one would
probably establish a very convenient isomorphism between $U$\ and $U^{%
\overline{\sigma }}$. Due to the original direct product, this issue boils
down to the corresponding question for every $\pi ^{i}\left( U\right) $
(except that we are now looking for automorphisms $\alpha :\pi ^{i}\left(
U\right) \rightarrow \pi ^{i}\left( U\right) $).

This would in general seem too good to be true: In order to come closer to
some sufficient conditions for such a cute set-up, let us further \textbf{%
assume that }$U$\textbf{\ splits over }$I$\textbf{, i.e. that }$U\cong
R\ltimes I$\textbf{, which again is equivalent to \ }$\pi ^{i}\left(
U\right) =R_{i}\ltimes L_{N_{i}}$\textbf{, for }$i=1,...,m$\textbf{, where }$%
R_{i}\cong R$\textbf{.}

Let us therefore define such a map $\alpha $\ on $U$, by determining its $%
N_{i}$-coordinates through $\pi ^{i}\left( \alpha \left( r\right) \right) =$%
\ $\sigma _{i}\left( r\right) $ or, with exponential notation, $r^{\sigma
_{i}}$,\ for $r\in R_{i}$,\ $\pi ^{i}\left( \alpha \left( l\right) \right)
=l $\ \ for\ $l\in L_{N_{i}}$.\ This is clearly a bijective map; we are now
going to find conditions for it to be homomorphic:\ \ 

Let $x$, $x^{\prime }\in U$, with $\pi ^{i}\left( x\right) =r_{i}l_{i}$, $%
\pi ^{i}\left( x^{\prime }\right) =r_{i}^{\prime }l_{i}^{\prime }$, where $%
l_{i}$, $l_{i}^{\prime }\in L_{N_{i}}$, $r_{i}$, $r_{i}^{\prime }\in R_{i}$%
.\ On the one side we have that $\pi ^{i}\left( \alpha \left( xx^{\prime
}\right) \right) =\alpha \left( \pi ^{i}\left( xx^{\prime }\right) \right)
=\alpha \left( r_{i}l_{i}r_{i}^{\prime }l_{i}^{\prime }\right) =\alpha
\left( r_{i}r_{i}^{\prime }l_{i}^{r_{i}^{\prime }}l_{i}^{\prime }\right) =$\ 
$r_{i}^{\sigma _{i}}r_{i}^{\prime \sigma _{i}}l_{i}^{r_{i}^{\prime
}}l_{i}^{\prime }$ \ (1), on the other is $\pi ^{i}\left( \alpha \left(
x\right) \alpha \left( x^{\prime }\right) \right) =$ $\pi ^{i}\left( \alpha
\left( x\right) \right) \pi ^{i}\left( \alpha \left( x^{\prime }\right)
\right) =\alpha \left( \pi ^{i}\left( x\right) \right) \alpha \left( \pi
^{i}\left( x^{\prime }\right) \right) =\alpha \left( r_{i}l_{i}\right)
\alpha \left( r_{i}^{\prime }l_{i}^{\prime }\right) =$\ $r_{i}^{\sigma
_{i}}r_{i}^{\prime \sigma _{i}}l_{i}^{r_{i}^{\prime \sigma
_{i}}}l_{i}^{\prime }$\ \ (2).\ Then for $\alpha $\ to be\ a homomorphism
one should have\ $\alpha \left( xx^{\prime }\right) =\alpha \left( x\right)
\alpha \left( x^{\prime }\right) $, or equivalently that $\pi ^{i}\left(
\alpha \left( xx^{\prime }\right) \right) =\pi ^{i}\left( \alpha \left(
x\right) \right) \pi ^{i}\left( \alpha \left( x^{\prime }\right) \right) $,
which by (1) \& (2) means $l_{i}^{r_{i}^{\prime }}=l_{i}^{r_{i}^{\prime
\sigma _{i}}}$, that is, $r_{i}^{\prime -1}r_{i}^{\prime \sigma _{i}}$
centralizes $l_{i}$\ $\forall l_{i}\in L_{N_{i}}$, $\forall r_{i}^{^{\prime
}}\in R_{i}$\ - i.e. that every element of the form $r_{i}^{-1}r_{i}^{\sigma
_{i}}$\ in each $R_{i}$ centralizes $L_{N_{i}}$, $i=1,...,m$. Assume now
further that $\overline{\sigma }\in \left( AutR\right) ^{m}$\ is the
1-canonical representative of its class\ in $\mathfrak{P}^{m-1}\left(
AutR\right) $.\ Notice that, with such an 1-canonical $\overline{\sigma }\in
\left( AutR\right) ^{m}$, the condition we have found is automatically
trivially satisfied by $R_{1}$.

\begin{proposition}
a. If $U$\ splits over its core $I$, then the necessary and sufficient
condition for the existence of an isomorphism $\alpha $ from $U$ to $U^{%
\overline{\sigma }}$ (with $\overline{\sigma }$ 1-canonical) acting
trivially on the core $I$\ of $U$\ and inducing $\sigma _{i}$\ on $R_{i}$, $%
i=1,...,m$ is that every element of the form $r_{i}^{-1}r_{i}^{\sigma _{i}}$%
\ in each $R_{i}$ centralizes $L_{N_{i}}$, $i=2,...,m$.

b. Assuming further that, for every $i=2,...,m$, $\sigma _{i}$\ is a
fixed-point free automorphism of $R_{i}$\ and that every $R_{i}$ is either
finite or abelian and Artinian (as a $%
\mathbb{Z}
$-module), the condition above is equivalent to the statement that $R_{i}$
is contained in the center $\mathfrak{Z}\left( \pi ^{i}\left( U\right)
\right) $\ of\ $\pi ^{i}\left( U\right) $, hence that $\pi ^{i}\left(
U\right) =L_{N_{i}}\times R_{i}$, $i=2,...,m$.\ 

\ In particular, that latter is the case if already\ $R$\ itself centralizes 
$I$,\ i.e. if $U\cong R\times I$, which is equivalent to $\pi ^{i}\left(
U\right) =L_{N_{i}}\times R_{i}$, for $i=1,...,m$. \ \ \ 
\end{proposition}

\begin{proof}
\bigskip It is now sufficient to prove part (b).

Assuming that the automorphism\ $\sigma _{i}$\ is fixed-point free, the map
(not a homomorphism, in general)\ $\phi _{i}$ sending $x\in R_{i}$\ to $%
x^{-1}x_{{}}^{\sigma _{i}}\in R_{i}$ is injective: for \ $x^{-1}x^{\sigma
_{i}}=y^{-1}y^{\sigma _{i}}\Leftrightarrow yx^{-1}=\left( yx^{-1}\right)
^{\sigma _{i}}$, whence the assumption on $\sigma _{i}$\ gives $y=x$.

If $R_{i}$ is finite, then $\phi _{i}$ is\ clearly bijective.

Let us now suppose that $R_{i}$ is abelian and Artinian (as a $%
\mathbb{Z}
$-module).

Then $\phi _{i}$ is\ suddenly homomorphic, actually a monomorphism. Then the
Artinian property (DCC) forces the monomorphism $\phi _{i}$ to be
surjective, hence an automorphism: Suppose that $\phi _{i}$ is\ not
surjective. So, there exists some $0\neq y\in R_{i}$ that does not belong to 
$\func{Im}\phi _{i}$, therefore $\phi _{i}\left( y\right) \notin \func{Im}%
\phi _{i}^{2}$. On the other hand $\phi _{i}\left( y\right) $, obviously
belonging to $\func{Im}\phi _{i}$, cannot be $0$, due to $\phi _{i}$'s
injectivity; that proves that the obvious inclusion $\func{Im}\phi
_{i}\supset \func{Im}\phi _{i}^{2}$\ is strict; by a similar argument the
strictness of inclusion continues inductively in the infinite tower $\func{Im%
}\phi _{i}\supset \func{Im}\phi _{i}^{2}\supset \func{Im}\phi
_{i}^{3}\supset \func{Im}\phi _{i}^{4}\supset ...$, which contradicts the
Artinian DCC. Therefore is $\phi _{i}$\ an automorphism.

That means that in both cases of (b) every element of $R_{i}$ may be written
in the form $x^{-1}x^{\sigma _{i}}$, for some $x\in $\ \ $R_{i}$; therefore,
the condition for the existence of a $\overline{\sigma }$-coinduced
homomorphism $\alpha $ of $U$ becomes that every element of $R_{i}$
centralize $L_{N_{i}}$, according to (a), i.e. that $R_{i}$ centralize $%
L_{N_{i}}$.
\end{proof}

\bigskip Notice that, in case the condition of (b) on $R_{i}$, $i=1,...,m$,
is satisfied for all but for $i=\kappa $, then one should prefer the $\kappa 
$-canonical representative of the class of $\overline{\sigma }$\ in $%
\mathfrak{P}^{m-1}\left( AutR\right) $,\ afterwards examine if all $\sigma
_{i}$\ on $R_{i}$ for $i\neq \kappa $\ are fixed-point free.

\begin{remark}
Here is a situation, where the above proposition is applicable, possibly
(depending on $\overline{\sigma }$) its part (b) too: Let $U$\ above be of
finite order $\kappa \lambda $, $\left( \kappa ,\lambda \right) =1$, $%
|R|=\kappa $, $|I|=\lambda $,\ with $I$\ abelian; in that case it is known
(for example. \cite[IV 3.13, Remark]{KB}) that the sequence $%
I\rightarrowtail U\twoheadrightarrow R$ splits - and, consequently (or just
by the same arguments, as $|L_{N_{i}}|$ is a divisor of $|I|$), $\pi
^{i}\left( U\right) \ $splits over\ $L_{N_{i}}$ (i.e., $L_{N_{i}}%
\rightarrowtail \pi ^{i}\left( U\right) \twoheadrightarrow R_{i}$ splits).
\end{remark}

\bigskip

\begin{lemma}
Define $\phi _{i}:R_{i}\rightarrow R_{i}$ as the map\ sending $x$ to $%
x^{-1}x^{\sigma _{i}}$; by assuming $R_{i}$ to be abelian, $\phi _{i}$\
becomes a group homomorphism. Then the restriction of $\sigma _{i}$ to the
image $\phi _{i}\left( R_{i}\right) =\func{Im}\phi _{i}$\ is fixed-point
free.\ 
\end{lemma}

\begin{proof}
\bigskip Observe that $\ker \phi _{i}=R_{i}^{\left\langle \sigma
_{i}\right\rangle }$,\ where $R_{i}^{\left\langle \sigma _{i}\right\rangle }$%
\ is\ the subgroup of $\sigma _{i}$-fixed points, hence we get the natural
isomorphism $\func{Im}\phi _{i}\cong R_{i}\diagup R_{i}^{\left\langle \sigma
_{i}\right\rangle }$ so that the $\sigma _{i}$-action on $\func{Im}\phi _{i}$
be equivalent to the one induced on $R_{i}\diagup R_{i}^{\left\langle \sigma
_{i}\right\rangle }$.\ \ 
\end{proof}

\begin{example}
Let us just take a simple example, just to assist visualization:

Consider the epimorphisms $\xi _{1}:%
\mathbb{Z}
_{15}\ni x\longmapsto x\func{mod}3\in 
\mathbb{Z}
_{3}$\ and $\xi _{2}:%
\mathbb{Z}
_{21}\ni y\longmapsto y\func{mod}3\in 
\mathbb{Z}
_{3}$;\ their fiber product over $%
\mathbb{Z}
_{3}$\ is then the subgroup $U=\left\{ \left( x,y\right) \in 
\mathbb{Z}
_{15}\times 
\mathbb{Z}
_{21}\text{: }x\func{mod}3=y\func{mod}3\right\} $, clearly a subdirect
product of $%
\mathbb{Z}
_{15}\times 
\mathbb{Z}
_{21}$.\ Let $Aut%
\mathbb{Z}
_{3}=\left\langle \sigma \right\rangle $, $\sigma ^{2}=1$; let us determine
the subgroup $U^{\overline{\sigma }}$,\ with $\overline{\sigma }=\left(
1,\sigma \right) \sim \left( \sigma ^{-1},1\right) $\ which, according to
proposition \ref{tPr}, is effectuated through "twisting"\ by $\left( \sigma
,1\right) $: That means, $U^{\overline{\sigma }}=$\ $\left\{ \left(
x,y\right) \in 
\mathbb{Z}
_{15}\times 
\mathbb{Z}
_{21}\text{: }\left( x\func{mod}3\right) ^{\sigma }=y\func{mod}3\right\} $.
\ 

Notice that both $U$\ and\ $U^{\overline{\sigma }}$ convey similar
diagrammatic depictions as $%
\begin{array}{c}
_{_{%
\mathbb{Z}
_{5}}}\diagup ^{^{%
\mathbb{Z}
_{3}}}\diagdown _{_{%
\mathbb{Z}
_{7}}}%
\end{array}%
$\ .
\end{example}

\bigskip

\subsection{Subdirect $\mathfrak{E}$-(in)decomposability}

\bigskip In the spirit of remark \ref{dSum3} we shall rather be speaking of
subdirect sums than products; of course in the case of a finite number of
summands (in which we mostly use the term "product" here) meaning of the two
terms is identical.

One might ask about the decomposability of any arbitrary group as a
subdirect product and what does a particular kind of decomposition mean in
terms of the structure of the group. To meet this kind of questions, also by
getting inspiration from our example \ref{ex24} above, we come to the
propositions below.

Before proceeding to see them, we wish to generalize the notion of a
subdirect group product, by allowing its definition up to isomorphism and
without the restriction about the finite number of direct factors:

\begin{definition}
$U$ shall be called \textbf{a (generalized) subdirect sum} of the family $%
\left\{ A_{j\text{ }}\text{, }j\in J\right\} $ if there exists a
monomorphism $\mu :U\rightarrow \tcoprod\limits_{j\in J}A_{j}$, such that
all composites $\pi _{i}\circ \mu $ \ be epimorphisms, where $\pi
_{i}:\tcoprod\limits_{j\in J}A_{j}\rightarrow A_{i}$ are the canonical
projections.
\end{definition}

\begin{proposition}
\label{SPF}$U$ is a subdirect sum of the family $\left\{ A_{j\text{ }}\text{%
, }j\in J\right\} $ if and only if there exists a family $\left\{ E_{j\text{ 
}}\text{, }j\in J\right\} $ of normal subgroups of $U$, so that $U\diagup
E_{j}\cong A_{j}$, and $\tbigcap\limits_{j\in J}E_{j}=1$.
\end{proposition}

\begin{proof}
\bigskip If the family of normal subgroups is given, in order to define $\mu
:U\rightarrow \tcoprod\limits_{j\in J}U\diagup E_{j}$ it suffices to define
all $\pi _{j}\circ \mu :$\ $U\rightarrow U\diagup E_{i}$; we simply define
them as the canonical maps. It is clear that $\ker \mu =$ $%
\tbigcap\limits_{j\in J}E_{j}=1$, threfore $\mu $\ is monomorphic.

Conversely, given a \ subdirect sum $U$\ as\ in the definition, let $E_{i}$\
($i\in J$) be\ defined the way we have done it earlier, i.e. $E_{i}=\ker
\left( \pi _{i}\circ \mu \right) $; but since $\pi _{i}\circ \mu
:U\rightarrow A_{i}$\ has been assumed to be epi-, we get readily $U\diagup
E_{i}\cong A_{i}$, as wished. On the other hand the kernel of the
monomorphism $\mu $,\ being trivial, is also equal to $\tbigcap\limits_{j\in
J}\ker \left( \pi _{j}\circ \mu \right) =\tbigcap\limits_{j\in J}E_{j}$, and
we are done.
\end{proof}

\begin{corollary}
\label{SI}\bigskip A group cannot be (isomorphically and non-trivially)
written as a subdirect sum if and only if the intersection of all its
non-trivial normal subgroups is non-trivial.
\end{corollary}

\bigskip Such groups may be called \textit{subdirectly indecomposable.}

There is much more that can in a similar manner be derived from the last
proposition; to state them in generality, let $\mathfrak{E}$ be a property
referring to factor groups by normal subgroups of a given group $G$. We
might also refer to $\mathfrak{E}$ as a class\ of groups, and then consider
the normal subgroups of $G$, such that the corresponding factor group belong
to the class $\mathfrak{E}$. We shall call the intersection of all such
normal subgroups \textbf{the }$\mathfrak{E}$\textbf{-residue of }$G$\textbf{.%
} The factor group corresponding to the $\mathfrak{E}$-residue the \ shall
then be called \textbf{the }$\mathfrak{E}$\textbf{-residual of }$G$\textbf{.}
It becomes then immediate to see the following

\bigskip

\begin{proposition}
The necessary and sufficient condition for a group $G$\ to be expressible as
a subdirect sum of groups belonging to the class $\mathfrak{E}$ is that the $%
\mathfrak{E}$-residue of $G$ is trivial - while there are at least two
non-trivial, proper normal subgroups of $G$,\ such that the corresponding
factor group belongs to $\mathfrak{E}$.
\end{proposition}

We mention some examples in the next

\begin{corollary}
If $G$\ is a torsion group, $\varpi _{1}$, $\varpi _{2}$,...,$\varpi _{s}$
pairwise distinct sets of primes, then $G$\ can be expressed as a subdirect
product of respectively $\varpi _{1}$, $\varpi _{2}$,...,$\varpi _{s}$
-groups if and only if $\tbigcap\limits_{i=1}^{s}\mathfrak{O}^{\varpi
_{i}}=1 $, where $G\diagup \mathfrak{O}^{\varpi }$\ is the $\varpi $%
-residual of $G$ (for definition see for example, \cite[3.44]{JR}), $%
\mathfrak{O}^{\varpi }$\ the "$\varpi $-residue".
\end{corollary}

\bigskip Notice that, unless $G$\ is a $\varpi _{\kappa }^{\prime }$ -group
for all $\kappa \in \left\{ 1,...,s\right\} $, the condition $%
\tbigcap\limits_{i=1}^{s}\mathfrak{O}^{\varpi _{i}}=1$\ also forces that at
least one of all those $\varpi _{\kappa }^{{}}$ - residues $\mathfrak{O}%
^{\varpi _{\kappa }}$\ is less than $G$. The result may also be extended to
the case of an infinite collection of $\varpi _{\kappa }$'s:

\begin{corollary}
If $G$\ is a torsion group and $\left\{ \varpi _{n}/n\in 
\mathbb{N}
\right\} $ a collection of pairwise distinct sets of primes, then $G$\ can
be expressed as a subdirect sum of $\tcoprod\limits_{n\in 
\mathbb{N}
}G\diagup \mathfrak{O}^{\varpi _{n}}$. (To secure non-triviality at all
places we may demand that $\mathfrak{O}^{\varpi _{n}}<G$ for all $n\in 
\mathbb{N}
$).
\end{corollary}

\bigskip Another interesting special case is given by the complete reducible
residue $CR(G)$\ of a group $G$, defined as the intersection of all normal
subgroups of $G$, such that the corresponding factor groups be simple. Then
simplicity yields the following:

\begin{corollary}
$CR(G)$\ is the unique normal subgroup of $G$, such that the factor group is
completely reducible (i.e. isomorphic to the direct sum of simple groups).
\end{corollary}

We remind that a group $G$\ is called quasisimple if it is perfect (: $\left[
G,G\right] =G$) and $G\diagup Z\left( G\right) $ is simple. The quasisimple
residual $QCR\left( G\right) $\ of a group $G$\ is defined as the
intersection of all kernels of epimorphisms of $G$\ onto quasisimple groups
(see f.ex. \cite[intr.]{DHP}, where also the last corollary is stated as
well).

\begin{corollary}
A group $G$\ is expressible as a subdirect sum of quasisimple groups if and
only if its quasisimple residual $QCR\left( G\right) $ is trivial. In that
case it follows that it actually becomes a direct or central product.
\end{corollary}

\begin{corollary}
The quasisimple residue $QCR\left( G\right) $ of a group $G$\ is its unique
smallest normal subgroup such that the corresponding factor group is
expressible as a subdirect sum of quasisimple groups.
\end{corollary}

\bigskip

\begin{proposition}
\label{SDec}\bigskip Every group $G$ may be written as a subdirect product
of groups that are either simple or subdirectly indecomposable.
\end{proposition}

\begin{proof}
\bigskip Let us assign to each $x\neq 1$\ in $G$\ a normal subgroup $K_{x}$%
,\ maximal among the normal subgroups not containing $x$; obviously $%
\tbigcap\limits_{x\neq 1}K_{x}=1$. By invoking to the elementary fact of the 
$1-1$ correspondence between the lattices of normal subgroups of $G$\
containing $K_{x}$, and of normal subgroups of\ $G/K_{x}$ (see for example. 
\cite[3.29]{JR}), we see immediately that any normal subgroup of $G/K_{x}$\
has to contain the (non-trivial) canonical image of $x$ in $G/K_{x}$,
otherwise the $1-1$ correspondence would yield a normal subgroup of $G$,
properly containing $K_{x}$, contradicting the maximality of $K_{x}$. In
case there is no normal subgroup of $G/K_{x}$ either, containing the
canonical image of $x$ in $G/K_{x}$, this group is obviously simple, while
otherwise\ is $G/K_{x}$ subdirectly indecomposable (corollary \ref{SI}).
Proposition \ref{SPF} now gives the result.
\end{proof}

\bigskip

\begin{remark}
Of course this proposition does not tell us anything about how interesting
the guaranteed subdirect decomposition might be. For example in the case of
a simple $G$ the proof of the proposition actually gives us a subdirect
representation of G as the diagonal in the product $\tprod\limits_{x\in
G}G_{x}$, where each $G_{x}=G$. That is, in the case of a simple $G$\ 
\textit{(and only in this!)}, the procedure in the proof of proposition \ref%
{SDec} yields a "deltoid subdirect product" (compare our definition \ref%
{SignD}), in the sense that its whole core $I$ is trivial. That is of course
uninteresting, it makes therefore sense to substitute $H$ for any diagonal $%
\Delta H$ like in the case of a simple $G$ in the outcome of the procedure,
in the spirit of condition \ref{DC}, still to get a subdirect decomposition
of the form guaranteed by proposition \ref{SDec} (except only for the case
that the given group was simple) but a more interesting one. We notice also
that such a decomposition is not in general uniquely determined, as the
choice of a maximal $K_{x}$ is not so either. \ 
\end{remark}

\textit{I believe that this new view/realization of our subject may provide
a key to progress on other questions, even in better understanding some
already known results and thereby also enhancing their deepening or
implementation; as a possible such reviewing might be thought the issue of
the lattice of such subgroups, also of the normal subgroups; on this subject
it should anyway be expedient to revisit, among many others of course, \cite%
{MDM},\cite{SUZ},\cite{JT}.}

\bigskip

\section{Subdirect presentations \& applications to homomorphisms}

\bigskip It is in itself interesting to look at subdirect products from
another point of view (and compare), but we may furthermore gain important
new insight and basic results about homomorphisms/endomorphisms on the way,
considerably extending classical/elementary ones; results which hitherto
have (amazingly) remained hidden, while we come across them very naturally
and unrestrained by the present approach. Also our approach here, for which
we already have been predisposed by example 24, remains general - but it
would be interesting and very fruitful, I believe, to apply our results and
techniques in more specific contexts or in concrete situations.

\subsection{The case of two factors}

\bigskip

Let $f_{i}:A\twoheadrightarrow G_{i}$, $i=1,2$, be non-trivial group
epimorphisms (we may always get to epimorphisms, by substituting the target
group with the image of the homomorphism). Let also $\Delta :A\ni
a\longmapsto \left( a,a\right) \in A\times A$ be the diagonal monomorphism, $%
F:A\times A\ni \left( a_{1},a_{2}\right) \longmapsto \left( f_{1}\left(
a_{1}\right) ,f_{2}\left( a_{2}\right) \right) \in G_{1}\times G_{2}$ and
define $u:=F\circ \Delta $, $U:=u\left( A\right) \leq G_{1}\times G_{2}$.

We shall subsequently be using all our previous terminology and symbols
about $U\leq G_{1}\times G_{2}$. Apparently, $\ker \left( F\right) =$ $\ker
\left( f_{1}\right) \times \ker \left( f_{2}\right) \leq A\times A$,
therefore is \ $\ker \left( u\right) =$ $\ker \left( f_{1}\right) \cap \ker
\left( f_{2}\right) $ \ (1).\ \ Of course, the assumed surjectivity of the $%
f_{i}$ 's means that the chosen U indeed is a subdirect product. Due to (1),
the condition \hspace{0in} $\tbigcap\limits_{i=1}^{2}\ker \left(
f_{i}\right) =1$ is equivalent to $u$ being injective; in this case, we may
describe U more explicitly (set-theoretically), as $U=\left\{ \left(
f_{1}\left( a\right) ,f_{2}\left( a\right) \right) \text{ }/\text{ }a\in
A\right\} $.

As $L_{1}=E_{2}=U\cap G_{1}=$ $f_{1}\left( \ker \left( f_{2}\right) \right) $
and $L_{2}=E_{1}=U\cap G_{2}=f_{2}\left( \ker \left( f_{1}\right) \right) $,
theorem \ref{Th1} in this case becomes:

\begin{proposition}
\label{2epimTh}$G_{1}\diagup f_{1}\left( \ker \left( f_{2}\right) \right)
\simeq G_{2}\diagup f_{2}\left( \ker \left( f_{1}\right) \right) \simeq $

$\simeq U\diagup f_{1}\left( \ker \left( f_{2}\right) \right) \times
f_{2}\left( \ker \left( f_{1}\right) \right) $ .
\end{proposition}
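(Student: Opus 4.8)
The plan is to read this off directly from Theorem 1, applied to the subgroup $U$ of the two-factor product $G_1\times G_2$; no genuinely new argument is needed, only the substitution of the data computed just above the statement. First I would record the two ingredients that Theorem 1 consumes: the projections of $U$ onto the factors and its intersections with them. Since each $f_i$ is surjective, $U=u(A)$ projects onto all of $G_i$, so $U$ is a subdirect product and $\pi_{G_1}(U)=G_1$, $\pi_{G_2}(U)=G_2$. Next I would invoke the identifications already established, namely $U\cap G_1=L_1=f_1(\ker f_2)$ and $U\cap G_2=L_2=f_2(\ker f_1)$; these are precisely the quantities that play the role of $U\cap A$ and $U\cap B$ in Theorem 1.

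With these in hand, Theorem 1 furnishes at once the structural isomorphism $\sigma:\pi_{G_1}(U)\diagup(U\cap G_1)\to\pi_{G_2}(U)\diagup(U\cap G_2)$, which upon substitution is exactly $G_1\diagup f_1(\ker f_2)\simeq G_2\diagup f_2(\ker f_1)$. The third term of the claimed chain is then produced by the prolongation clause of Theorem 1: $\sigma$ extends (naturally, and still isomorphically) to $T:=U\diagup\big((U\cap G_1)\times(U\cap G_2)\big)$, which here reads $U\diagup\big(f_1(\ker f_2)\times f_2(\ker f_1)\big)$. Stringing the two isomorphisms together yields precisely the displayed statement.

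The only place that calls for a line of checking — and the closest thing to an obstacle — is the identification $U\cap G_i=f_i(\ker f_j)$ ($j\neq i$); but this is immediate and has in effect already been recorded: an element of $U$ has the form $(f_1(a),f_2(a))$, and it lies in $G_1$ (i.e.\ has trivial second coordinate) exactly when $a\in\ker f_2$, whereupon its first coordinate ranges over all of $f_1(\ker f_2)$, and symmetrically for $G_2$. Everything else is a transcription of Theorem 1, so I do not expect any real difficulty; the content of the proposition lies entirely in recognizing it as the specialization.
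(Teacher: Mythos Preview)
Your proposal is correct and is essentially identical to the paper's own argument: the paper simply records the identifications $\pi_{G_i}(U)=G_i$ and $U\cap G_i=f_i(\ker f_j)$ and then states that ``theorem 1 in this case becomes'' the proposition, which is precisely the specialization you carry out. Your added one-line verification of $U\cap G_i=f_i(\ker f_j)$ is a harmless (and accurate) expansion of what the paper takes as already established.
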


By taking $G_{2}=A$ and $f_{2}=id_{A}$, we get

\begin{corollary}
\label{fhth}Given an epimorphism $f_{1}:A\twoheadrightarrow G_{1}$ of
groups, we have $G_{1}\simeq A\diagup \ker \left( f_{1}\right) $
\end{corollary}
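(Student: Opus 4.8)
The plan is to obtain the statement as a direct specialisation of Proposition 27, taking $G_{2}=A$ and $f_{2}=id_{A}$ within the two-factor framework set up just above it. First I would check that this choice meets the standing hypotheses: $id_{A}$ is an epimorphism (non-trivial as soon as $A\neq \mathit{1}$), so the map $u:A\ni a\longmapsto \left( f_{1}\left( a\right) ,a\right) \in G_{1}\times A$ is defined and $U=u\left( A\right) $ is a subdirect product of $G_{1}\times A$, since $\pi _{G_{1}}\left( U\right) =f_{1}\left( A\right) =G_{1}$ and $\pi _{A}\left( U\right) =A$. No injectivity of $u$ is needed.

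Next I would compute the two subgroups appearing in Proposition 27 under this specialisation. Since $\ker \left( f_{2}\right) =\ker \left( id_{A}\right) =\mathit{1}$, we get $f_{1}\left( \ker \left( f_{2}\right) \right) =\mathit{1}$; and since $f_{2}=id_{A}$, we get $f_{2}\left( \ker \left( f_{1}\right) \right) =\ker \left( f_{1}\right) $. In the notation recorded just before Proposition 27 this reads $L_{1}=E_{2}=U\cap G_{1}=\mathit{1}$ and $L_{2}=E_{1}=U\cap A=\ker \left( f_{1}\right) $. Substituting these values into the isomorphism chain of that proposition yields
\[
G_{1}\diagup \mathit{1}\simeq A\diagup \ker \left( f_{1}\right) \simeq U\diagup \left( \mathit{1}\times \ker \left( f_{1}\right) \right) ,
\]
and reading off the leftmost isomorphism gives $G_{1}\simeq A\diagup \ker \left( f_{1}\right) $, exactly the claim.

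I expect no genuine obstacle: the entire content is carried by Proposition 27 (hence ultimately by Theorem 1), and the point of the corollary is precisely to recover the first isomorphism theorem as a degenerate instance of the structural machinery, with one factor replaced by $A$ itself. The only detail deserving a word of care is the degenerate case $A=\mathit{1}$, in which $f_{2}=id_{A}$ fails the non-triviality requirement; but there $G_{1}$ and $\ker \left( f_{1}\right) $ are both trivial and the asserted isomorphism holds vacuously, so that case is dispatched in a single line.
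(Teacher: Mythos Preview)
Your proposal is correct and follows exactly the paper's own derivation: the paper obtains the corollary by the one-line specialisation $G_{2}=A$, $f_{2}=id_{A}$ in the immediately preceding proposition (numbered~30 in the paper's shared counter, not~27), and you have merely spelled out the routine verifications that $\ker(f_{2})=\mathit{1}$ and $f_{2}(\ker(f_{1}))=\ker(f_{1})$. Your handling of the degenerate case $A=\mathit{1}$ is a small extra care not present in the paper.
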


which, of course, is the elementary first homomorphism theorem: at this
point, it is essential to \textit{notice that our proof of theorem \ref{Th1}%
, on which proposition \ref{2epimTh} depends, does not apply this
homomorphism theorem!}

In this view, the first isomorphism in Prop. \ref{2epimTh} above is seen to
be a generalization of the first homomorphism theorem; as such one, we
reformulate it here:

\begin{corollary}
Given group homomorphisms $f_{i}:A\longrightarrow G_{i}$, $i=1,2$ , we have $%
\ Im\left( f_{1}\right) \diagup f_{1}\left( \ker \left( f_{2}\right) \right)
\simeq \func{Im}\left( f_{2}\right) \diagup f_{2}\left( \ker \left(
f_{1}\right) \right) $ .
\end{corollary}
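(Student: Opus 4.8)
The plan is to reduce the general statement to the surjective case already settled in Proposition 30. The only difference between the hypotheses here and those there is that the $f_i$ need not be surjective; but replacing each codomain $G_i$ by the image $\func{Im}(f_i)$ turns $f_i$ into an epimorphism onto it without disturbing any of the data that enter the conclusion. This is exactly the normalization flagged earlier in the section (``we may always get to epimorphisms, by substituting the target group with the image'').

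First I would set $G_i' := \func{Im}(f_i)$ and regard $f_i$ as an epimorphism $A \twoheadrightarrow G_i'$. I would then observe that this change of codomain leaves both $\ker(f_i)$ and the subgroups $f_i(\ker(f_j))$ untouched: the kernel is the preimage of the identity, unaffected by shrinking the codomain so long as it still contains the identity, while $f_i(\ker(f_j))$ is literally the image of the subset $\ker(f_j)\le A$ under $f_i$, hence the same subset of $G_i'$ no matter how the target is named. In particular the normality of $L_1=f_1(\ker(f_2))$ in $\pi_1(U)=G_1'$ and of $L_2=f_2(\ker(f_1))$ in $\pi_2(U)=G_2'$ is precisely the content of Theorem 1 (equivalently Proposition 3) applied to the subdirect product $U:=(F\circ\Delta)(A)\le G_1'\times G_2'$, so the two quotients appearing in the statement are genuinely well-formed groups.

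Then I would simply invoke the first isomorphism of Proposition 30, now read for the epimorphisms $f_i:A\twoheadrightarrow G_i'$, which yields
$$G_1' \diagup f_1(\ker(f_2)) \simeq G_2' \diagup f_2(\ker(f_1)).$$
Substituting $G_i'=\func{Im}(f_i)$ gives exactly the asserted isomorphism $\func{Im}(f_1)\diagup f_1(\ker(f_2)) \simeq \func{Im}(f_2)\diagup f_2(\ker(f_1))$.

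I do not anticipate any genuine obstacle: the whole content is the harmless reduction to surjections, and the substance has already been extracted in Proposition 30 through the structural isomorphism $\sigma$ of Theorem 1. The one point deserving a line of care is the verification that passing to images alters neither $\ker(f_i)$ nor $f_i(\ker(f_j))$, and this is immediate from the definitions.
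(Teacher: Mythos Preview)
Your proposal is correct and is precisely the paper's own approach: the paper presents this corollary explicitly as a \emph{reformulation} of Proposition~30, having already noted at the start of the subsection that ``we may always get to epimorphisms, by substituting the target group with the image of the homomorphism''. You have simply spelled out the routine verification that this substitution leaves $\ker(f_i)$ and $f_i(\ker(f_j))$ unchanged, which the paper leaves implicit.
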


Of particular interest is to specialize to the case, in which we have
endomorphisms instead of homomorphisms, when we shall drop surjectivity, so
as to have the same target group $A$ in all cases; it is often convenient to
take isomorphic copies $A_{i}$ of $A$, through fixed isomorphisms: we may
subsequently do it at convenience, even without special notification.

\begin{corollary}
Given $\rho _{i}\in End\left( A\right) $, i=1,2 , \ it holds that $Im\left(
\rho _{1}\right) \diagup \rho _{1}\left( \ker \left( \rho _{2}\right)
\right) \simeq $

$\simeq \func{Im}\left( \rho _{2}\right) \diagup \rho _{2}\left( \ker \left(
\rho _{1}\right) \right) \simeq U\diagup \rho _{1}\left( \ker \left( \rho
_{2}\right) \right) \times \rho _{2}\left( \ker \left( \rho _{1}\right)
\right) $, where $U=u(A)$, as above .
\end{corollary}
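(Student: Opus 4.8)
The plan is to read this corollary as nothing more than the endomorphism instance of Proposition 30, once the two endomorphisms have been made surjective onto their images. Since an arbitrary $\rho_i\in End(A)$ need not be onto, the only preparatory step is to replace the common target $A$ by $G_i:=Im(\rho_i)$ and to regard $\rho_i:A\twoheadrightarrow G_i$ as a (non-trivial) epimorphism, which is exactly the hypothesis under which Proposition 30 above was proved (taking $f_i=\rho_i$).

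First I would verify that this relabelling leaves the relevant objects untouched. By construction $u(a)=(\rho_1(a),\rho_2(a))$, so every element of $U=u(A)$ already has its first coordinate in $Im(\rho_1)$ and its second in $Im(\rho_2)$; hence $U\leq Im(\rho_1)\times Im(\rho_2)=G_1\times G_2$ with no change, and the surjectivity of each $\rho_i$ onto $G_i$ makes $U$ a subdirect product of $G_1\times G_2$. Next I would transcribe the two intersection subgroups: an element of $U\cap G_1$ has the form $(\rho_1(a),\rho_2(a))$ with $\rho_2(a)=1$, i.e.\ $a\in\ker\rho_2$, whence $L_1=U\cap G_1=\rho_1(\ker\rho_2)$, and symmetrically $L_2=U\cap G_2=\rho_2(\ker\rho_1)$. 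Both are normal (in $U$ and in the respective projection) by Proposition 3, so every quotient appearing in the statement is legitimate.

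With this dictionary, Proposition 30 reads verbatim
\[
G_1/\rho_1(\ker\rho_2)\;\simeq\;G_2/\rho_2(\ker\rho_1)\;\simeq\;U/\big(\rho_1(\ker\rho_2)\times\rho_2(\ker\rho_1)\big),
\]
and substituting $G_i=Im(\rho_i)$ gives precisely the asserted triple isomorphism. Moreover the two outer isomorphisms are the structural ones of Theorem 1, so the composite $Im(\rho_1)/\rho_1(\ker\rho_2)\to Im(\rho_2)/\rho_2(\ker\rho_1)$ carries the coset of $\rho_1(a)$ to the coset of $\rho_2(a)$ for every $a\in A$ — a point worth recording, since it is what makes the statement genuinely informative for endomorphisms.

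I do not anticipate a real obstacle: the argument is pure specialization and the content is bookkeeping. The one step deserving an explicit line is the harmlessness of the codomain restriction — passing from the nominal target $A$ (which need not equal $Im(\rho_i)$) to $Im(\rho_i)$ changes neither $U$, nor the intersections $U\cap G_i$, nor the equivalence relation on $U$ from which the core is built, precisely because $U$ is contained in $Im(\rho_1)\times Im(\rho_2)$ from the outset. Once that observation is in place, the isomorphisms of Proposition 30 are inherited directly and nothing further is needed.
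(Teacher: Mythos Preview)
Your proposal is correct and follows exactly the route the paper intends: the corollary is stated immediately after Corollary 32 as the specialization to endomorphisms, with the paper noting just before it that one drops surjectivity and, as always, may recover it by passing to the image. Your explicit verification that the codomain restriction leaves $U$, the intersections $L_i$, and hence the structural isomorphism of Proposition 30 unchanged is precisely the bookkeeping the paper leaves implicit.
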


\bigskip

\subsection{The case of n (\TEXTsymbol{>}2) factors}

\bigskip

The general outset is just a generalization of the case $n=2$; thus, let $%
f_{i}:A\twoheadrightarrow G_{i}$, $i=1,...,n$ , be non-trivial group
epimorphisms (we can always reduce to the case of epimorphisms, by taking
the images as domains of the $f_{i}$ 's). Let also $\Delta :A\ni
a\longmapsto \left( a,...,a\right) \in A^{n}$ be the diagonal monomorphism, $%
F:A^{n}\ni \left( a_{1},...,a_{n}\right) \longmapsto \left( f_{1}\left(
a_{1}\right) ,...,f_{n}\left( a_{n}\right) \right) \in
Dr\tprod\limits_{i=1}^{n}G_{i}$ and define $u:=F\circ \Delta $, $U:=u\left(
A\right) \leq Dr\tprod\limits_{i=1}^{n}G_{i}$. As before, $\ker \left(
F\right) =$ $Dr\tprod\limits_{i=1}^{n}\ker \left( f_{i}\right) \leq A^{n}$,
whence $\ker \left( u\right) =$ $\tbigcap\limits_{i=1}^{n}\ker \left(
f_{i}\right) $, so that the assumption of its triviality, i.e. that $%
\tbigcap\limits_{i=1}^{n}\ker \left( f_{i}\right) =1$, amount to $u$ 's
injectivity, in which case we may describe $U$ set-theoretically as, $%
U=\left\{ \left( f_{1}\left( a\right) ,...,f_{n}\left( a\right) \right) 
\text{ }/\text{ }a\in A\right\} $. \ \ 

Our previous terminology shall apply to our $U$ here too.\ \ 

$u$ may of course be viewed as a representation of the group $A$ as
subdirect product; by taking our outview from a subdirect product, however,
one might be looking for a suitable $u$, i.e. an $A$ with the right
homomorphisms, to get to such a "presentation" of a given $U$:

\begin{definition}
For a subdirect product $U$ of $Dr\tprod\limits_{i=1}^{n}G_{i}$, we shall be
calling a homomorphism $u=F\circ \Delta :A\longrightarrow
Dr\tprod\limits_{i=1}^{n}G_{i}$ as above, such that $u(A)=U$, a
"presentation of $U$ by homomorphisms"; we shall also denote this subdirect
product $U$ by $[A;\left( f_{1},...,f_{n}\right) ]$. It shall be called "%
\textbf{terse",} if it is injective, i.e. if \ $\tbigcap\limits_{i=1}^{n}%
\ker \left( f_{i}\right) =1$.\ 
\end{definition}

Let $i_{1}\langle i_{2}\langle ...\langle i_{s}$ sequence of indices from $%
\left\{ 1,...,n\right\} $; set $\Lambda =\left\{
i_{1},i_{2},...,i_{s}\right\} $ and write $\left\{ 1,...,n\right\} =\Lambda
\cup \widehat{\Lambda }$, a disjoint union.

Set $K_{i}=\ker \left( f_{i}\right) $ and, for any subset $\Lambda $ of
indices as above, $K_{\Lambda }=K_{i_{1}i_{2}...i_{s}}=K_{i_{1}}\cap
K_{i_{2}}\cap ...\cap K_{i_{s}}$. Set, furthermore, $\xi _{\Lambda }=\pi
_{\Lambda }\circ u$. Clearly, $\xi _{\Lambda }\left( A\right) =\pi _{\Lambda
}\left( U\right) $.

We see immediately that

\begin{lemma}
\label{tL1}(a) $\ker \left( \xi _{\Lambda }\right) =K_{\Lambda }$ \ , (b) $%
L_{\Lambda }\left( U\right) =u\left( K_{\widehat{\Lambda }}\right) =E_{%
\widehat{\Lambda }}\left( U\right) $ \ and (c) The core $I$ of $U$ is, $%
I=\left\langle E_{i}\text{ }/\text{ }i=1,...,n\right\rangle =\left\langle
u\left( K_{i}\right) \text{ }/\text{ }i=1,...,n\right\rangle $=

=$u\left( \left\langle K_{i}\text{ }/\text{ }i=1,...,n\right\rangle \right)
=u\left( \tprod\limits_{i=1}^{n}K_{i}\right) $.
\end{lemma}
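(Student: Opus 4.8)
The statement to prove is Lemma (the last displayed one), with three parts: (a) $\ker(\xi_\Lambda)=K_\Lambda$, (b) $L_\Lambda(U)=u(K_{\widehat\Lambda})=E_{\widehat\Lambda}(U)$, and (c) $I=u\bigl(\prod_{i=1}^n K_i\bigr)$. The plan is to unwind the definitions $\xi_\Lambda=\pi_\Lambda\circ u$, $u=F\circ\Delta$, $K_i=\ker(f_i)$, and then identify each object with a concrete preimage or image under $u$, leaning on the structural facts already established (Proposition 3 for the generating structure of $I$, and Theorem 5(a) together with the relation $E_{1\ldots\widehat{i_1}\ldots\widehat{i_s}\ldots n}=L_{i_1\ldots i_s}$ for the $E$--$L$ translation).

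Let me sketch the details. First I would prove (a). Here $\xi_\Lambda(a)=\pi_\Lambda(u(a))=\bigl(f_{i_1}(a),\ldots,f_{i_s}(a)\bigr)$, so $a\in\ker(\xi_\Lambda)$ exactly when $f_{i_j}(a)=1$ for every $j$, i.e. $a\in\bigcap_{j}\ker(f_{i_j})=K_\Lambda$; this is immediate. Next, for (b), recall that $E_{\widehat\Lambda}(U)$ consists of the elements of $U$ having $1$ in all coordinates \emph{outside} $\Lambda$ --- equivalently, having $1$ in the $\widehat\Lambda$-coordinates --- while $L_\Lambda(U)=\bigl(Dr\prod_{i\in\Lambda}A_i\bigr)\cap I$ is exactly the subgroup of $I$ supported on the $\Lambda$-coordinates; the identity $L_\Lambda=E_{\widehat\Lambda}$ (with $\widehat\Lambda=\{1,\ldots,n\}\setminus\Lambda$) is precisely the translation recorded in the ``generic case'' section, so that half is already available. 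The substantive claim is $E_{\widehat\Lambda}(U)=u(K_{\widehat\Lambda})$: an element $u(a)$ lies in $E_{\widehat\Lambda}$ iff its $i$-th coordinate $f_i(a)$ is $1$ for all $i\in\widehat\Lambda$, i.e. iff $a\in\bigcap_{i\in\widehat\Lambda}\ker(f_i)=K_{\widehat\Lambda}$, so $E_{\widehat\Lambda}=u(K_{\widehat\Lambda})$ directly. (Note the index swap: the ``complementary'' coordinates for $L_\Lambda$ are those in $\widehat\Lambda$, which is why $K_{\widehat\Lambda}$ and not $K_\Lambda$ appears.)

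**Finishing (c) and the expected obstacle.** For (c), I would start from Proposition 3, which already gives $I=\langle E_1,\ldots,E_n\rangle=\langle u(K_1),\ldots,u(K_n)\rangle$ (using the special case $\Lambda=\{i\}$ of part (b), for which $E_i=u(K_{\widehat{\{i\}}})$; but more directly $E_i=u(\ker(f_i))=u(K_i)$ by the computation above applied to a single omitted index --- I must be careful here that $E_i$ corresponds to $K_i$, not $K_{\widehat i}$, and verify the indexing is consistent with the $E$--$L$ convention). Since $u$ is a homomorphism it carries the subgroup generated by the $K_i$ to the subgroup generated by their images, so $\langle u(K_i)\rangle=u(\langle K_i\rangle)=u\bigl(\prod_{i=1}^n K_i\bigr)$, where the last equality uses that the $K_i$ are all normal in $A$ (being kernels), so their join is their product. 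The single point that needs genuine care --- the main obstacle --- is keeping the hat/complement bookkeeping straight: the reader must check that $E_i$ pairs with $K_i$ while $L_\Lambda$ pairs with $K_{\widehat\Lambda}$, since a sign error in the index complementation would make the three parts inconsistent. Everything else reduces to elementwise verification through the definitions and the normality of kernels, so no deep argument beyond the already-cited Proposition 3 and the $E$--$L$ correspondence is required.
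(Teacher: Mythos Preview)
Your proposal is correct and matches the paper's treatment: the paper introduces this lemma with the phrase ``We see immediately that'' and gives no proof, so the intended argument is exactly the definition-unwinding you carry out. Your only wobble --- momentarily writing ``$E_i=u(K_{\widehat{\{i\}}})$'' before self-correcting to $E_i=u(K_i)$ --- is harmless, since you explicitly flag and resolve the complement bookkeeping; with that straightened out, parts (a)--(c) follow directly from the definitions of $\xi_\Lambda$, $E_{\widehat\Lambda}$, $L_\Lambda$, Proposition~3, and the normality of the $K_i$ as kernels.
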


\begin{lemma}
\bigskip Given a presentation by homomorphisms $u$ of the subdirect product $%
U$ as above, we can always get to a terse presentation of $U$ as a subdirect
product of the same direct product.
\end{lemma}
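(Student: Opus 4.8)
The plan is to quotient out the obstruction to injectivity directly. Write $N = \ker(u) = \bigcap_{i=1}^{n} K_i$, which is a normal subgroup of $A$ (this is the kernel formula recorded just above, applied with $\Lambda = \{1,\dots,n\}$). If $N = 1$ the given presentation is already terse and there is nothing to prove, so I would assume $N \neq 1$ and pass to $\bar{A} = A/N$ with natural projection $\nu : A \to \bar{A}$.

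Since $N \le K_i = \ker(f_i)$ for every $i$, each $f_i$ factors uniquely through $\nu$, yielding epimorphisms $\bar{f}_i : \bar{A} \to G_i$ with $\bar{f}_i \circ \nu = f_i$; these remain non-trivial because the $f_i$ are surjective onto non-trivial $G_i$. I would then form the diagonal monomorphism $\bar{\Delta} : \bar{A} \to \bar{A}^{\,n}$ and the product map $\bar{F}(x_1,\dots,x_n) = (\bar{f}_1(x_1),\dots,\bar{f}_n(x_n))$, and set $\bar{u} = \bar{F} \circ \bar{\Delta}$, so that $\bar{u}$ is a presentation by homomorphisms in exactly the sense defined above, into the very same direct product $\prod_{i=1}^{n} G_i$.

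Two checks then finish the argument. First, $\bar{u}$ presents the same subgroup: for $a \in A$ one has $\bar{u}(\nu(a)) = (\bar{f}_1(\nu(a)),\dots,\bar{f}_n(\nu(a))) = (f_1(a),\dots,f_n(a)) = u(a)$, whence $\bar{u}(\bar{A}) = u(A) = U$. Second, $\bar{u}$ is terse: by the same kernel computation as for $u$ one gets $\ker(\bar{u}) = \bigcap_{i=1}^{n} \ker(\bar{f}_i)$, and since each $K_i$ contains $N$ the correspondence theorem gives $\ker(\bar{f}_i) = K_i/N$, so that $\ker(\bar{u}) = \bigl(\bigcap_{i=1}^{n} K_i\bigr)/N = N/N = 1$.

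The only point requiring care — and the nearest thing to an obstacle — is the legitimacy of the factorization together with the kernel bookkeeping: everything rests on the single containment $N \le K_i$ for all $i$, which is precisely what lets each $f_i$ descend to $\bar{A}$ and what makes the intersection of the descended kernels collapse to the identity. Once that containment is in hand the verification is entirely formal, and no new structure beyond the quotient $A/\ker(u)$ is introduced.
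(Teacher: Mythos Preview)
Your proof is correct and follows essentially the same approach as the paper: quotient $A$ by $N=\ker(u)=\bigcap_{i=1}^{n}K_i=K_{12\ldots n}$ and use the containment $N\le K_i$ to factor each $f_i$ through $\bar A=A/N$. The paper's proof is terser, omitting the explicit verifications that $\bar u(\bar A)=U$ and $\ker(\bar u)=1$ which you spell out, but the underlying construction is identical.
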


\begin{proof}
Since $K_{12...n}=$\ $\tbigcap\limits_{i=1}^{n}\ker \left( f_{i}\right) $ is
contained in the kernel of any $f_{i}$, all $f_{i}$'s factor through $%
\overline{A}=A\diagup K_{12...n}$, giving rise to $\overline{f_{i}}:%
\overline{A}\twoheadrightarrow G_{i}$, $i=1,...,n$ and, thus, a terse
presentation.
\end{proof}

\bigskip It is immediate to verify the following remarkable lemma:

\begin{lemma}
Given a "tersely presented by homomorphisms" smashed subdirect product $%
U=[A;\left( f_{1},...,f_{n}\right) ]$, we can readily get a usual definition
of $U$ as a pull-back out of it. Conversely, given a definition of a
subdirect product $U$ as a pull-back, we get to a terse presentation of it
as $U=[U;\left( p_{1},...,p_{n}\right) ]$, where $p_{i}$ \ is the $i$ 'th
projection from the direct product, which may be considered as trivial in
the sense that the homomorphism $u=\left( p_{1},...,p_{n}\right) \circ
\Delta $ is the identity map on $U$.
\end{lemma}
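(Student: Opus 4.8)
The plan is to prove Lemma 40 in two directions, treating it as a dictionary between the two standard ways of encoding a smashed subdirect product. For the forward direction, I start from a tersely presented smashed $U=[A;(f_1,\dots,f_n)]$. By Theorem 19, since $U$ is smashed and subdirect, its core splits as $I=L_1\times L_2\times\cdots\times L_n$ and all the quotients $\pi_i(U)/L_i=A_i/L_i$ are isomorphic to a common group $R\simeq U/I$ through a structural system of isomorphisms; moreover $U$ is realized as the fiber product (pull-back) of the $A_i$'s amalgamated over a fixed copy $R$ of $A_i/L_i$, with respect to the natural epimorphisms $A_i\twoheadrightarrow R$. So the content of the forward direction is merely to exhibit these epimorphisms explicitly in terms of the presentation data: I would use Lemma 38(b), which gives $L_i=L_{\{i\}}=u(K_{\widehat{\{i\}}})=E_{\widehat{\{i\}}}$, together with the identification $\pi_i(U)=\xi_{\{i\}}(A)=f_i(A)=G_i$, to read off the required quotient maps $G_i\twoheadrightarrow R$ and assemble them into the pull-back diagram. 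The point is that Theorem 19 has already done all the structural work; here I am only naming the maps.

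For the converse, I would begin with a subdirect product $U\le Dr\prod_{i=1}^n G_i$ presented as a pull-back. I take $A:=U$ itself, let $p_i\colon Dr\prod_{i=1}^n G_i\to G_i$ be the $i$th coordinate projection, and set $f_i:=p_i|_U\colon U\to G_i$. With the diagonal $\Delta\colon U\to U^n$, the composite $u=F\circ\Delta$ sends $x\in U$ to $(p_1(x),\dots,p_n(x))=x$, so $u=\mathrm{id}_U$ and in particular $u(U)=U$, exactly as claimed; this map is trivially injective, so by Definition 36 the presentation $U=[U;(p_1,\dots,p_n)]$ is terse. Thus the converse is essentially a tautology once one chooses $A=U$, and the only thing to verify is that the $f_i=p_i|_U$ are genuinely epimorphisms onto the $G_i$, which is precisely the hypothesis that $U$ is a subdirect product.

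I do not expect a serious obstacle: both directions are unwinding of definitions sitting on top of Theorem 19 and Lemma 38. The one place that deserves a sentence of care is the forward direction, where I must make sure the epimorphisms $G_i\twoheadrightarrow R$ coming out of the presentation really coincide (structurally) with the amalgamating maps produced in Theorem 19 — i.e.\ that the pull-back obtained from the presentation data reconstructs the same $U$ rather than merely an abstractly isomorphic one. This is guaranteed by the uniqueness clause in Theorem 5(b) (and its specialization in Theorem 19): the structural isomorphism, and hence the amalgamation, is uniquely determined by the inclusion $U\le Dr\prod_{i=1}^n G_i$, so any system of epimorphisms inducing the correct coset-to-coset correspondence must give back $U$ on the nose. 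I would close by remarking that the verb \emph{readily} in the statement is justified precisely because Theorem 19 pre-packages the smashed case as a fiber product, leaving nothing to compute.
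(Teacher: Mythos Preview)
Your proposal is correct and follows essentially the same approach as the paper: both directions are an unwinding of definitions on top of the structural theorem for smashed subdirect products, with the forward direction extracting the amalgamating epimorphisms $G_i\twoheadrightarrow R$ from the presentation data and the converse taking $A=U$ with the coordinate projections so that $u=\mathrm{id}_U$. The only cosmetic difference is that the paper writes out the case $n=2$ explicitly (giving the concrete formulas $\tau_1=\sigma\circ\pi_1$, $\tau_2=\pi_2$ and computing $\ker(p_i)=1\times L_j$), whereas you invoke the general smashed theorem directly; the content is the same.
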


\begin{proof}
\bigskip We restrict ourselves to show it here for $n=2$ (in which case $U$
is always smashed), as the technic is the same for any $n$; the
generalization for an arbitrary $n>2$ is obtained by use of theorem \ref%
{smTh}.

For the first part, we set $G_{1}\diagup f_{1}\left( \ker \left(
f_{2}\right) \right) \simeq G_{2}\diagup f_{2}\left( \ker \left(
f_{1}\right) \right) :\simeq R$; for the pull-back, we set off from the
epimorphisms $\ \tau _{i}:G_{i}\twoheadrightarrow R$, that have to be chosen
so that together they induce the structural

$\sigma :G_{1}\diagup f_{1}\left( \ker \left( f_{2}\right) \right) 
\widetilde{\longrightarrow }G_{2}\diagup f_{2}\left( \ker \left(
f_{1}\right) \right) $ (: the structural correspondence of pair-fibres), for
example by letting

$G_{2}\diagup f_{2}\left( \ker \left( f_{1}\right) \right) :=R$ and,
denoting with $\pi _{i}:G_{i}\twoheadrightarrow G_{i}\diagup f_{i}\left(
\ker \left( f_{j}\right) \right) $, $\left\{ i,j\right\} =\left\{
1,2\right\} $, the canonical epimorphisms, take $\tau _{1}=\sigma \circ \pi
_{1}$ and $\tau _{2}=\pi _{2}$; i.e. \ $%
\begin{array}{ccc}
U & \longrightarrow & G_{1} \\ 
\downarrow &  & \downarrow \tau _{1} \\ 
G_{2} & \underrightarrow{\tau _{2}} & R%
\end{array}%
$ .

For the other direction, let $U$\ be given as the pull-back of the
epimorphisms $\ \tau _{i}:G_{i}\twoheadrightarrow R$, i.e. $U=\left\{ \left(
g_{1},g_{2}\right) \in G_{1}\times G_{2}\text{ / }\tau _{1}\left(
g_{1}\right) =\tau _{2}\left( g_{2}\right) \right\} $; take $[U;\left(
p_{1},p_{2}\right) ]$, set $K_{i}=\ker \left( f_{i}\right) $ (i=1,2) and
observe that $\ker \left( p_{1}\right) =1\times L_{2}$, $\ker \left(
p_{2}\right) =L_{1}\times 1$, $p_{i}\left( \ker \left( p_{j}\right) \right)
=L_{i}$ ($i\neq j$).
\end{proof}

\bigskip So, we see that homomorphic presentation of a subgroup of a direct
product is not necessarily bound to its smashedness, as its definition as a
pull-back does.

We may apply theorem \ref{ssgTh} to our "homomorphically" presented"
subdirect product $U$, even without demanding our $u$ to be injective
("terse"). Taking into account lemma \ref{tL1}(b), we get:

\begin{proposition}
With the notation above, $\xi _{\Lambda }\left( A\right) \diagup u\left( K_{%
\widehat{\Lambda }}\right) \cong \xi _{\widehat{\Lambda }}\left( A\right)
\diagup u\left( K_{\Lambda }\right) \cong u\left( A\right) \diagup \left(
u\left( K_{\widehat{\Lambda }}\right) \times u\left( K_{\Lambda }\right)
\right) $ .
\end{proposition}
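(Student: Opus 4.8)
The plan is to obtain Proposition 40 as a direct translation of Theorem 5(b) through the dictionary provided by Lemma 35. Theorem 5(b) is stated abstractly for any subgroup $U\le A_1\times\cdots\times A_n$ and any index subset $\Lambda=\{i_1,\dots,i_s\}$: it gives the structural isomorphism
\[
\pi_{\widehat{\Lambda}}(U)\diagup E_{\Lambda}\;\widetilde{\longrightarrow}\;\pi_{\Lambda}(U)\diagup E_{\widehat{\Lambda}}\;\widetilde{\longrightarrow}\;U\diagup\bigl(E_{\Lambda}\times E_{\widehat{\Lambda}}\bigr).
\]
Here $U=u(A)$ is our homomorphically presented subdirect product of $Dr\tprod_{i=1}^{n}G_i$, so I am simply reading off what each of the three quotients becomes in the present notation.

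First I would handle the numerators. By definition $\xi_{\Lambda}=\pi_{\Lambda}\circ u$, and Lemma 35 records the identity $\xi_{\Lambda}(A)=\pi_{\Lambda}(U)$; likewise $\xi_{\widehat{\Lambda}}(A)=\pi_{\widehat{\Lambda}}(U)$, and of course $u(A)=U$. So the three numerators appearing in Theorem 5(b), namely $\pi_{\widehat{\Lambda}}(U)$, $\pi_{\Lambda}(U)$ and $U$, coincide respectively with $\xi_{\widehat{\Lambda}}(A)$, $\xi_{\Lambda}(A)$ and $u(A)$. Next I would handle the denominators. This is exactly where Lemma 35(b) does the work: it asserts $L_{\Lambda}(U)=u(K_{\widehat{\Lambda}})=E_{\widehat{\Lambda}}(U)$, and by the symmetric statement (applying 35(b) with $\Lambda$ replaced by $\widehat{\Lambda}$) also $E_{\Lambda}(U)=L_{\widehat{\Lambda}}(U)=u(K_{\Lambda})$. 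Thus $E_{\Lambda}=u(K_{\Lambda})$ and $E_{\widehat{\Lambda}}=u(K_{\widehat{\Lambda}})$, so the denominators $E_{\Lambda}$, $E_{\widehat{\Lambda}}$ and $E_{\Lambda}\times E_{\widehat{\Lambda}}$ of Theorem 5(b) become $u(K_{\Lambda})$, $u(K_{\widehat{\Lambda}})$ and $u(K_{\widehat{\Lambda}})\times u(K_{\Lambda})$.

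Substituting both dictionaries into Theorem 5(b) yields precisely the claimed chain
\[
\xi_{\Lambda}(A)\diagup u(K_{\widehat{\Lambda}})\;\cong\;\xi_{\widehat{\Lambda}}(A)\diagup u(K_{\Lambda})\;\cong\;u(A)\diagup\bigl(u(K_{\widehat{\Lambda}})\times u(K_{\Lambda})\bigr),
\]
which is the statement of Proposition 40. The only point requiring any care is bookkeeping: one must make sure that the roles of $\Lambda$ and $\widehat{\Lambda}$ are paired correctly, since $E_{\Lambda}$ translates to $u(K_{\Lambda})$ (not $u(K_{\widehat{\Lambda}})$) under the \emph{second} half of Lemma 35(b), and it is easy to cross the indices. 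I expect this index-matching to be the main — indeed the only — obstacle, and it is a purely notational one; once the correspondences $\xi_{\Lambda}(A)=\pi_{\Lambda}(U)$ and $E_{\Lambda}=u(K_{\Lambda})$ are laid side by side, the proposition is an immediate rewriting of a theorem already proved. No injectivity (terseness) of $u$ is needed anywhere, which is consistent with the remark in the text that Theorem 5 applies to the homomorphically presented $U$ regardless of whether $u$ is injective.
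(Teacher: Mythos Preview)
Your proposal is correct and follows essentially the same route as the paper: the paper simply states that one applies Theorem~5 to the homomorphically presented $U$ and uses Lemma~35(b) to translate the resulting quotients, which is exactly the dictionary substitution you carry out. Your explicit care with the $\Lambda$ versus $\widehat{\Lambda}$ bookkeeping and the remark that terseness is unnecessary are both in line with (indeed more detailed than) the paper's one-line justification.
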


\bigskip For n=2, this amounts to proposition \ref{2epimTh}; let us see what
we get for n=3:

\begin{corollary}
Given the non-trivial group epimorphisms $f_{i}:A\twoheadrightarrow G_{i}$, $%
i=1,2,3$, we have the double isomorphism:

$G_{1}\diagup f_{1}\left( K_{23}\right) \cong \xi _{23}\left( A\right)
\diagup \xi _{23}\left( K_{1}\right) \cong u\left( A\right) \diagup \left(
f_{1}\left( K_{23}\right) \times \xi _{23}\left( K_{1}\right) \right) $ -
and two more, symmetrically.
\end{corollary}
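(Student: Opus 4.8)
The plan is to recognize Corollary 38 as nothing more than Proposition 37 read off for $n=3$ with the single-index set $\Lambda = \{1\}$ (so that $\widehat{\Lambda} = \{2,3\}$), the ``two more, symmetrically'' statements arising from the choices $\Lambda = \{2\}$ (giving $G_2 \diagup f_2(K_{13})$, etc.) and $\Lambda = \{3\}$ (giving $G_2 \diagup f_3(K_{12})$, etc.). The only genuine content is translating the three pairwise-isomorphic quotients of Proposition 37 into the concrete groups $G_1$, $\xi_{23}(A)$ and $u(A)$ that appear in the statement, using the evaluations forced by the definitions together with Lemma 35(b).

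First I would record what the definitions give. Since $u = F\circ \Delta$ sends $a \mapsto \left(f_1(a),f_2(a),f_3(a)\right)$, the composite $\xi_1 = \pi_1 \circ u$ equals $f_1$, whence $\xi_{\{1\}}(A) = f_1(A) = G_1$; likewise $\xi_{\widehat{\Lambda}}(A) = \xi_{23}(A) = \pi_{23}(U)$, which is the middle group as written. Next I would justify the two realizations ``$=$'' that are silently used. For $k \in K_{23} = K_2 \cap K_3$ one has $u(k) = \left(f_1(k),1,1\right)$, so $\pi_1$ restricts to an isomorphism of $u(K_{23})$ onto $f_1(K_{23}) \leq G_1$; by Lemma 35(b) this subgroup is exactly $L_1(U) = E_{23}(U) = U\cap G_1$, which is precisely the identification licensing $f_1(K_{23})$ in place of $u(K_{23})$. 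Symmetrically, for $k \in K_1$ one has $u(k) = \left(1,f_2(k),f_3(k)\right)$, so $\pi_{23}$ restricts to an isomorphism of $u(K_1)$ onto $\xi_{23}(K_1) \leq \pi_{23}(U)$, and again by Lemma 35(b) this is $L_{23}(U) = E_1(U)$. These let me replace $u(K_{23})$ by $f_1(K_{23})$ and $u(K_1)$ by $\xi_{23}(K_1)$ throughout.

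Finally I would substitute these identifications into the three quotients of Proposition 37, namely $\xi_{\Lambda}(A)\diagup u(K_{\widehat{\Lambda}})$, $\xi_{\widehat{\Lambda}}(A)\diagup u(K_{\Lambda})$ and $u(A)\diagup\left(u(K_{\widehat{\Lambda}}) \times u(K_{\Lambda})\right)$, obtaining the displayed double isomorphism. The product $f_1(K_{23}) \times \xi_{23}(K_1)$ in the last denominator is already direct in Proposition 37 (the two factors being $L_1$ and $L_{23} = E_1$, which sit in the first and in the last two coordinates respectively, hence intersect trivially and commute elementwise from the ambient direct product), so the translation merely relabels its factors. The only step requiring any care is this notational translation — in particular verifying that the complementary coordinates of $u(K_{23})$ and of $u(K_1)$ vanish, so that $\pi_1$ and $\pi_{23}$ restrict to isomorphisms and the equalities ``$=$'' in the statement are genuine; once that is in hand there is no further obstacle, since all the group-theoretic work is already carried out in Proposition 37 and Lemma 35.
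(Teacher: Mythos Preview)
Your proposal is correct and matches the paper's approach exactly: the paper does not even supply a separate proof for this corollary, introducing it with the sentence ``For $n=2$, this amounts to proposition 30; let us see what we get for $n=3$,'' so it is meant precisely as the specialization of the preceding proposition (your ``Proposition 37'' is the paper's Proposition 38 --- the numbering is off by one) to $n=3$ and $\Lambda=\{1\}$. Your careful unpacking of the identifications $u(K_{23})\leftrightarrow f_1(K_{23})$ and $u(K_1)\leftrightarrow \xi_{23}(K_1)$ via Lemma 35(b) is exactly the translation the paper leaves implicit; note only the small typo where you write $G_2\diagup f_3(K_{12})$ for the $\Lambda=\{3\}$ case --- this should be $G_3\diagup f_3(K_{12})$.
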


We recall now a previous note, from just above lemma \ref{L12}:

\textit{- In elementary terms, this condition means that }$L_{M}\times
L_{N}= $\textit{\ }$L_{\Lambda }$\textit{\ iff, for any }$x\in L_{\Lambda }$%
\textit{, the element }$\pi _{M}\left( x\right) $\textit{\ of }$%
Dr\tprod\limits_{i\in M}A_{i}\ $\textit{also belongs to }$L_{M}$\textit{\ -
or, equivalently, for any }$x\in L_{\Lambda }$,\textit{\ }$\pi _{N}\left(
x\right) \in L_{N}$\textit{.}

By using this criterion, we get readily to the following

\begin{proposition}
\label{40}\bigskip \bigskip \bigskip Assume that $K_{12...n}=1$, i.e. that $%
u $ is injective (: a terse presentation of $U$) and let $\Lambda =\left\{
i_{1},i_{2},...,i_{s}\right\} =M\cup N$ \ \ ($M\cap N=\varnothing $) be a
partition of the subset $\Lambda $ of $\left\{ 1,...,n\right\} $. The
condition $L_{\Lambda }=L_{M}\times L_{N}$ is in our case of such a
"homomorphically" presented" subdirect product $U$ equivalent to $K_{%
\widehat{\Lambda }}=K_{\widehat{M}}\times K_{\widehat{N}}$ \ (an "internal"
direct product), where $\widehat{M}$, $\widehat{N}$ are the complements of
M, N (respectively) inside the index set $\left\{ 1,...,n\right\} $.
\end{proposition}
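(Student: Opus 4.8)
The plan is to translate the stated condition on the $L$'s into a condition on the kernels $K$ via the dictionary furnished by Lemma 35(b), namely $L_{\Lambda}(U)=u(K_{\widehat{\Lambda}})$. Under the tersely presented hypothesis $K_{12\ldots n}=1$, the map $u$ is injective, so $u$ restricts to an isomorphism between the lattice of subgroups of $A$ lying over $\tprod K_i$ and the corresponding subgroups of $U$; in particular $u$ carries internal direct products to internal direct products and reflects them. Thus I would first rewrite each of the three groups appearing in the desired equivalence: $L_{\Lambda}=u(K_{\widehat{\Lambda}})$, $L_{M}=u(K_{\widehat{M}})$, $L_{N}=u(K_{\widehat{N}})$, where $\widehat{M}$ and $\widehat{N}$ are complements taken inside the full index set $\{1,\ldots,n\}$ (not inside $\Lambda$). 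The point to keep straight is that $\widehat{\Lambda}=\widehat{M}\cap\widehat{N}$ when $\Lambda=M\cup N$ is a partition, so $K_{\widehat{\Lambda}}=K_{\widehat{M}\cap\widehat{N}}=K_{\widehat{M}}\cap K_{\widehat{N}}$ by the defining intersection formula for the $K$'s.

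Next I would argue the equivalence by pushing the direct-product condition through $u$. Since $u$ is injective, $L_{\Lambda}=L_M\times L_N$ (an internal direct product inside $U$) holds if and only if $u(K_{\widehat{\Lambda}})=u(K_{\widehat{M}})\cdot u(K_{\widehat{N}})$ with $u(K_{\widehat{M}})\cap u(K_{\widehat{N}})=1$, and this in turn holds if and only if $K_{\widehat{\Lambda}}=K_{\widehat{M}}K_{\widehat{N}}$ is an internal direct product inside $A$. The intersection being trivial is automatic in the right direction from $K_{\widehat{M}}\cap K_{\widehat{N}}=K_{\widehat{M}\cap\widehat{N}}=K_{\widehat{\Lambda}}$; but the substantive content is the product/normality condition packaged in ``$\times$''. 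I would use the elementary criterion recalled just before the proposition: $L_{\Lambda}=L_M\times L_N$ is equivalent to $\pi_M(x)\in L_M$ for every $x\in L_{\Lambda}$. Carrying this back through $u$ and Lemma 35(b), the condition $\pi_M(x)\in L_M$ becomes the statement that the $\widehat{M}$-part of any element of $K_{\widehat{\Lambda}}$ already lies in $K_{\widehat{M}}$, which is precisely the assertion that $K_{\widehat{\Lambda}}=K_{\widehat{M}}\times K_{\widehat{N}}$ is an internal direct product.

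The main obstacle I expect is bookkeeping with complements: because $\widehat{M}$ and $\widehat{N}$ are complemented inside $\{1,\ldots,n\}$ rather than inside $\Lambda$, they are \emph{not} disjoint in general (they overlap in $\widehat{\Lambda}$, which contains the indices outside $\Lambda$ entirely). So the claimed ``internal direct product'' $K_{\widehat{M}}\times K_{\widehat{N}}$ must be interpreted correctly: it is a direct product only when restricted appropriately, and the honest content is that $K_{\widehat{\Lambda}}$ decomposes compatibly with the splitting of the $\Lambda$-coordinates into $M$ and $N$. I would make this precise by noting that the factors $A_i$ for $i\notin\Lambda$ contribute a common block to both $K_{\widehat{M}}$ and $K_{\widehat{N}}$, and that the genuinely new content of the direct-product condition concerns only how the $M$- and $N$-coordinates interact; after factoring out the common $\widehat{\Lambda}$-block, the equivalence reduces to the clean statement for a partition of $\Lambda$ itself, which is exactly the $L$-side criterion transported by the injective $u$. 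Once the complement bookkeeping is handled, both implications follow formally from the injectivity of $u$ together with Lemma 35(b) and the intersection law for the $K$'s.
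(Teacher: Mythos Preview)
Your overall strategy---translate via Lemma~35(b) and the injectivity of $u$, then invoke the $\pi_M$-criterion recalled before the proposition---is exactly the paper's. But your bookkeeping on the $K$'s is backwards, and this error propagates through the rest of the proposal. By definition $K_S=\bigcap_{i\in S}K_i$, so the correct identity is
\[
K_{\widehat{M}}\cap K_{\widehat{N}} \;=\; K_{\widehat{M}\cup\widehat{N}},
\]
not $K_{\widehat{M}\cap\widehat{N}}$. Since $M\cap N=\varnothing$ gives $\widehat{M}\cup\widehat{N}=\{1,\dots,n\}$, the intersection $K_{\widehat{M}}\cap K_{\widehat{N}}=K_{12\ldots n}=1$ is trivial \emph{automatically} from terseness. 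Your claim $K_{\widehat{M}}\cap K_{\widehat{N}}=K_{\widehat{\Lambda}}$ is therefore false, and your entire third paragraph---the worry about a ``common $\widehat{\Lambda}$-block'' to factor out---is a phantom obstacle created by this slip. The groups $K_{\widehat{M}}$ and $K_{\widehat{N}}$ are both \emph{subgroups} of $K_{\widehat{\Lambda}}$ (more indices, smaller intersection), not groups that each contain $K_{\widehat{\Lambda}}$.

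There is a second, subtler gap. You write that the criterion $\pi_M(x)\in L_M$ ``becomes the statement that the $\widehat{M}$-part of any element of $K_{\widehat{\Lambda}}$ already lies in $K_{\widehat{M}}$''. But $A$ is not a direct product indexed by $\{1,\dots,n\}$; elements of $A$ have no ``$\widehat{M}$-part''. The paper's actual argument at this step is: given $x\in K_{\widehat{\Lambda}}$, the criterion supplies \emph{some} $x_M\in K_{\widehat{M}}$ with $\pi_M(u(x_M))=\pi_M(u(x))$, and symmetrically some $x_N\in K_{\widehat{N}}$; one then checks $u(x_Mx_N)=u(x)$ by comparing the $M$- and $N$-projections (using $K_{\widehat{N}}\subset K_M=\ker(\pi_M\circ u)$), and only then does injectivity of $u$ give $x=x_Mx_N$. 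That step---producing $x_M,x_N$ and verifying their product recovers $x$---is the substantive content, and your proposal elides it.
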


\begin{proof}
By implementing the above mentioned criterion, if $L_{\Lambda }=L_{M}\times
L_{N}$ then,\ \textit{for any }$x\in K_{\widehat{\Lambda }}$\textit{\ (}$%
\Leftrightarrow u\left( x\right) \in L_{\Lambda }$)\textit{, }$\pi
_{M}\left( u\left( x\right) \right) =\xi _{M}\left( x\right) \in L_{M}$,
meaning that there is some $x_{M}\in A$, with $u\left( x_{M}\right) \in
L_{M} $ ($\Leftrightarrow f_{i}\left( x_{M}\right) =1$ for any $i\in 
\widehat{M}\Leftrightarrow x_{M}\in K_{\widehat{M}}$), such that $\pi
_{M}\left( u\left( x_{M}\right) \right) =\pi _{M}\left( u\left( x\right)
\right) $; correspondingly, we have that $\pi _{N}\left( u\left( x\right)
\right) =\xi _{N}\left( x\right) \in L_{N}$, meaning that there is some $%
x_{N}\in K_{\widehat{N}}$, such that $\pi _{N}\left( u\left( x_{N}\right)
\right) =\pi _{N}\left( u\left( x\right) \right) $.

We claim now that $u\left( x_{M}x_{N}\right) =u\left( x\right) $; since both
parts do apparently belong to $L_{\Lambda }$ and $\Lambda =M\cup N$, it
suffices to prove that both projections $\pi _{M}$ and $\pi _{N}$, when
applied to them, give the same result.

However, $\pi _{M}\left( u\left( x_{M}x_{N}\right) \right) =\pi _{M}\left(
u\left( x_{M}\right) \right) \pi _{M}\left( u\left( x_{N}\right) \right)
=\pi _{M}\left( u\left( x_{M}\right) \right) =\pi _{M}\left( u\left(
x\right) \right) $, as $x_{N}\in K_{\widehat{N}}\subset K_{M}=\ker (\pi
_{M}\circ u)$ and, similarly, $\pi _{N}\left( u\left( x_{M}x_{N}\right)
\right) =\pi _{N}\left( u\left( x\right) \right) $; hence, as noticed, $%
u\left( x_{M}x_{N}\right) =u\left( x\right) $, which, by invoking to the
injectivity of $u$, implies that $x=x_{M}x_{N}$.

We have thus proven that $K_{\widehat{\Lambda }}\subset K_{\widehat{M}}K_{%
\widehat{N}}$; the other inclusion being apparent, this implies $K_{\widehat{%
\Lambda }}=K_{\widehat{M}}K_{\widehat{N}}$. But, as $M\cap N=\varnothing $
implies $\widehat{M}\cup \widehat{N}=\left\{ 1,...,n\right\} $, we get that $%
K_{\widehat{M}}\cap K_{\widehat{N}}=K_{1...n}=1$, as $U$ 's presentation is
"terse", implying that the product of those two normal subgroups of $K_{%
\widehat{\Lambda }}$ is, indeed, direct.

The converse implication becoming now quite apparent, our proposition has
been proven.
\end{proof}

\begin{corollary}
\bigskip For the tersely presented $U$ above, the condition for it to be
smashed (see def. 11) is, that the subgroup of $A$, generated by the normal
subgroups $K_{i}$, $i=1,...,n$, is the (necessarily direct, due to
terseness) product of all $K_{\widehat{i}}$ 's - i.e., $\tprod%
\limits_{i=1}^{n}K_{i}=\tprod\limits_{i=1}^{n}K_{\widehat{i}}$.
\end{corollary}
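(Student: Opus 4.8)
The plan is to transport the smashedness condition, which Definition 11 records as $I = L_1 \times L_2 \times \cdots \times L_n$, through the homomorphism $u$, using the two computations supplied by lemma 35. That lemma gives the core as $I = u\left(\prod_{i=1}^n K_i\right)$ in part (c), and, specializing part (b) to $\Lambda = \{i\}$ (so $\widehat{\Lambda} = \widehat{i}$), gives each factor as $L_i = u(K_{\widehat{i}})$. Since $u$ is a homomorphism, the subgroup generated by the $L_i$ satisfies $L_1 L_2 \cdots L_n = u(K_{\widehat{1}} K_{\widehat{2}} \cdots K_{\widehat{n}}) = u\left(\prod_{i=1}^n K_{\widehat{i}}\right)$. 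So the corollary will reduce to comparing the two displayed images of $u$.

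First I would isolate what smashedness really demands. The internal product $L_1 L_2 \cdots L_n$ is automatically direct inside $I$, with no hypothesis needed: each $L_i = A_i \cap I$ lies in the $i$-th direct factor $A_i$ of $A$, so distinct $L_i, L_j$ commute elementwise and $L_i \cap \langle L_j : j \neq i\rangle \subseteq A_i \cap \prod_{j\neq i} A_j = 1$. Hence $\langle L_1, \dots, L_n\rangle = L_1 \times \cdots \times L_n$ always holds as an internal direct product contained in $I$, and the only genuine content of smashedness is the generation equality $I = L_1 L_2 \cdots L_n$.

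Next I would compare $I = u\left(\prod_i K_i\right)$ with $L_1 \cdots L_n = u\left(\prod_i K_{\widehat{i}}\right)$. Terseness means $u$ is injective, hence an isomorphism onto its image, so the equality $I = L_1 \cdots L_n$ in $U$ holds if and only if $\prod_{i=1}^n K_i = \prod_{i=1}^n K_{\widehat{i}}$ holds in $A$. Combined with the preceding paragraph, this is exactly the asserted criterion: $U$ is smashed iff $\prod_i K_i = \prod_i K_{\widehat{i}}$.

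Finally I would justify the parenthetical directness of $\prod_i K_{\widehat{i}}$. Each $K_{\widehat{i}} = \bigcap_{j \neq i} K_j$ is normal in $A$ (an intersection of kernels), and for $j \neq i$ one has $K_{\widehat{j}} \subseteq K_i$, whence $\prod_{j \neq i} K_{\widehat{j}} \subseteq K_i$; therefore $K_{\widehat{i}} \cap \prod_{j\neq i} K_{\widehat{j}} \subseteq K_{\widehat{i}} \cap K_i = \bigcap_{l=1}^n K_l = K_{12\ldots n} = 1$ by terseness, so normality forces $\prod_i K_{\widehat{i}} = K_{\widehat{1}} \times \cdots \times K_{\widehat{n}}$ to be an internal direct product (equivalently, one may pull the direct decomposition $L_1 \times \cdots \times L_n$ back through the injective $u$). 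The one point needing care is keeping the two automatic direct-product statements — the directness of the $L_i$ on the $U$-side and of the $K_{\widehat{i}}$ on the $A$-side — cleanly separated from the single generation equality that actually encodes smashedness; once that equality is pinned down, everything follows from injectivity of $u$ together with the bookkeeping of lemma 35.
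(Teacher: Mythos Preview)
Your argument is correct and complete. The paper states this corollary without proof, treating it as immediate from Proposition~40 and the surrounding setup; your route via Lemma~35 directly (using part~(c) for $I = u\bigl(\prod_i K_i\bigr)$ and part~(b) specialized to singletons for $L_i = u(K_{\widehat{i}})$, then invoking injectivity of~$u$) is exactly the natural way to supply those details, and is arguably more transparent than iterating Proposition~40. Your careful separation of the automatic direct-product facts from the single generation equality $I = L_1\cdots L_n$ is a good touch and makes clear where terseness actually enters.
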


We may also specialize to endomorphisms instead of the above homomorphisms $%
f_{i}$ - in which case, of course, we shall have to abandon surjectivity for
the defining homomorphisms, in the general case.

\bigskip

\begin{example}
\bigskip Let $U=[A;\left( f_{1},...,f_{n}\right) ]$, $f_{i}:A%
\twoheadrightarrow G_{i}$, $i=1,...,n$ ($n\geq 2$), where $A$ has a normal
subgroup $B=B_{1}\times ...\times B_{n}$ , hence every single direct factor $%
B_{i}$ is normal in $A$. Let $G\diagup B\simeq R$.

Set $K_{i}=Dr\tprod\limits_{j\neq i}B_{j}$, \ $\ker \left( f_{i}\right)
=K_{i}$, $G=Dr\tprod\limits_{i=1}^{n}G_{i}$.

In the notation that we have introduced in this last section, $K_{\widehat{i}%
}=\tbigcap\limits_{j\neq i}K_{j}=B_{i}$ and, by the corollary above, U is
smashed.

Of course, this here is a re-visiting (and notational updating) of example %
\ref{ex24}.
\end{example}

\begin{example}
\bigskip We construct an example of a non-smashed subdirect product:

\ \ Let $U=[A;\left( f_{1},...,f_{5}\right) ]$, $f_{i}:A\twoheadrightarrow
G_{i}$, $i=1,...,5$ , where $A$ has a normal subgroup $B=B_{12}\times
B_{3}\times B_{4}\times B_{5}$ , and let\ $C_{1\text{ }},C_{2}$\ be normal
subgroups of A contained in $B_{12}$, with trivial intersection (or
otherwise the presentation wouldn't be terse) but so that $C_{1}C_{2}\neq
B_{12}$, with $K_{1}=\ker \left( f_{1}\right) =C_{2}B_{3}B_{4}B_{5}$, $%
K_{2}=\ker \left( f_{2}\right) =C_{1}B_{3}B_{4}B_{5}$, $K_{3}=\ker \left(
f_{3}\right) =B_{12}B_{4}B_{5}$,\ $K_{4}=\ker \left( f_{4}\right)
=B_{12}B_{3}B_{5}$,\ $K_{5}=\ker \left( f_{5}\right) =B_{12}B_{3}B_{4}$. The
presentation is terse, as $K_{12345}=1$.\ 

By lemma 35(b), we have: $L_{1}=u\left( K_{2345}\right) =u\left(
C_{1}\right) $, $L_{2}=u\left( K_{1345}\right) =u\left( C_{2}\right) $, $%
L_{12}=$\ $u\left( K_{345}\right) =u\left( B_{12}\right) $;\ by applying
proposition 40, we see that $L_{12}$ is cohesive, which means that $U$ here
is not smashed. The cohesive components are readily seen to be $L_{12}$, $%
L_{3}=u\left( K_{1245}\right) =u\left( B_{3}\right) $, $L_{4}=u\left(
K_{1235}\right) =u\left( B_{4}\right) $, $L_{5}=u\left( K_{1234}\right)
=u\left( B_{5}\right) $. We may now apply theorem 29, to deduce that:

$[A;\left( f_{1},f_{2}\right) ]\diagup u\left( B_{12}\right) $\ $\simeq
G_{3}\diagup f_{3}\left( B_{3}\right) \simeq G_{4}\diagup f_{4}\left(
B_{4}\right) \simeq G_{5}\diagup f_{5}\left( B_{5}\right) \simeq $

$\simeq \lbrack A;\left( f_{3},f_{4}\right) ]\diagup u\left(
B_{3}B_{4}\right) \simeq $\ $[A;\left( f_{1},f_{2},f_{3}\right) ]\diagup
u\left( B_{12}B_{3}\right) $\ $\simeq ...$

\ \ It is not difficult to make a generalization of this example.
\end{example}

\begin{remark}
If we let $f_{i}:A\twoheadrightarrow G_{i}$, $i=1,...,n$,\ be just group\
homomorphisms, not necessarily surjective, we get obvious actions of $%
Aut\left( A\right) $ and $Dr\tprod\limits_{i}Aut\left( G_{i}\right) $,
further also of $End\left( A\right) $ and $Dr\tprod\limits_{i}End\left(
G_{i}\right) $,\ on the set (/semigroup) of subgroups of $%
Dr\tprod\limits_{i}G_{i}$. Of particular interest is the case, when all $%
G_{i}$'s are isomorphic - in particular, to $A$.
\end{remark}

\bigskip

\section{Through a virtual dualization to diagrams}

\bigskip

As we have seen, subgroups of direct products may be viewed as pull-backs,
i.e., fiber products. Their structure conveys "naturally" diagrammatic
depictions of the form $%
\begin{array}{c}
\diagup |\diagdown%
\end{array}%
$ - with $m$ edges in the general case of theorem \ref{gTh}; we remind
especially the important remarks \ref{diagR} and \ref{gdiaG}.

Although we have not given a proper general definition for diagrams, its
suggested use in this case just corresponds to the structure theorem \ref%
{gTh} and does certainly have the special restriction, that it refers to a
particular representation of a group $U$ as a subdirect product. It has,
nevertheless, the basic characteristic that we might expect of any diagram:
Consisting of just two layers (levels), the vertices of the lower correspond
to (a direct product of) subobjects (subgroups), the vertex on top is a
factor group, namely one that is a factor group in many different ways
according to our general theorem \ref{gTh}. Let us, for our ease, allow
ourselves to call the vertex on top \textit{the head of our depiction of }$U$%
, the direct product of the lower level its \textit{socle}; notice that this
refers only to the particular embedding of $U$\ as a subdirect product.

If we now, conversely, use the investigated structure of such subgroups in
order to deduce properties for such a basic diagram\textbf{,} then our
theorem \ref{gTh}, our analysis of the subdirect product structure and, in
particular, lemmata \ref{L2}, \ref{LL2}, \ref{LLL2} and\ \ref{LLLL2}\ make
it clear that:

\begin{lemma}
\label{pbf}\textbf{a.} Any subdiagram of the suggested subdirect product
representation diagram of such a subgroup $U$ comprising a single edge (or
any proper subset of the set of edges) corresponds to a certain factor group
- but never to a subgroup of $U$. Consequently, there is no proper subgroup
of $U$\ that corresponds to any subdiagram containing the top vertex.

\textbf{b.} The diagrammatic properties of any subdiagram as in (a),
comprising any number of edges, corresponding to a factor group of $U$, are
the same as of the whole diagram of $U$ - i.e., property \textbf{(a)}
"repeats itself".
\end{lemma}

\bigskip Notice that whenever we speak of subdiagrams here, we shall mean
that they are connected (unless otherwise stated) and that they include any
edge of the given one if and only if they also include both its ends.

But there is another major feature to justify taking this kind of simple
diagrams as a major cornerstone for a diagrammatic theory:\ \ Namely, what
makes such a diagrammatic depiction especially interesting and worth
studying for us is its \textbf{virtuality}, in the sense that the multiple
direct factors of the "socle" \textit{are also determined set-theoretically}
( of well defined sections of $U$, in some extended set-theoretical sense)
and not just up to isomorphism - meaning that: \textbf{Their vertices
correspond to well-defined subsets of well-defined subsections.} Notice that
this virtuality could not possibly be claimed just by reference to
pull-backs, as these are only defined up to isomorphism; this is why our
first approach has been through "subgroups of direct products".

It would next be natural to think of considering the dual case, i.e. the
virtual counterpart of push-outs.

This, however, becomes cumbersome in the category of groups: In it
coproducts (/push-outs) are namely realizable by free products (/by
amalgamated \textit{free} products). In particular, in the case of
extensions of an arbitrary group $G$ by an abelian group $A$, the push-out
is the extension that is (functorially) induced by $G$-module homomorphisms $%
A\rightarrow A^{\prime }$, while the ones induced by homomorphisms $%
G^{\prime }\rightarrow G$\ are as usually obtained as pull-backs. That
push-out is a quotient not of the direct, but of the semidirect (see \cite[%
IV 3, exercise 1(b); see also ex. 2]{KB}).

On the other hand, by moving into the category $\mathfrak{Ab}$ of abelian
groups, duality works fine: then the push-out of a family of morphisms $%
S\rightarrow A_{i}$, $i\in I$, is realized as a certain factor group of
their coproduct (direct sum); one may compare this with our example \ref%
{ex24}.

\textit{Our main focus with diagrammatic methods shall therefore from now on
however shift from groups to modules - and to representation theory. We
intend to get a new kind of diagrams there, ones having "virtual properties"
in a sense that generalizes the basic "virtuality" described above. The
original motivation toward the main subject of this article has actually
been this: to begin understanding and substantializing "virtuality" in
modules as well as possible. Then I chose to generalize by considering the
more difficult category of groups, instead of those of abelian groups or
modules, while also viewing it as very interesting for its own sake. }

This shift of area (category) shall also allow us to dualize, so as to get
the virtual counterpart of push-outs.

This and much more is done in \cite{StG1} and its natural continuation in 
\cite{StG}.\bigskip \bigskip \bigskip\


\begin{thebibliography}{99}
\bibitem{BR} Bridson Martin R., Howie James, Miller III Charles F., Short
H., On the finite presentation of subdirect products and the nature of
residually free groups, American Journal of Mathematics, vol. 135 (2013)
p.891-933.

\bibitem{KB} Kenneth S. Brown, Cohomology of Groups, Springer Verlag,
Graduate Texts in Mathematics, Vol. 87, [DOI 10.1007/978-1-4684-9327-6] 1982.

\bibitem{StG} Stephanos Gekas, A new type of diagrams for modules.
arXiv:1603.06506 [math.RT], [DOI:10.13140/RG.2.1.1168.2964] 2016 \textit{%
[still in progress]}

\bibitem{StG1} Stephanos Gekas, Virtual extensions of modules.
arXiv:1609.07974

\bibitem{DHP} Derek F. Holt, W. Plesken,\ Perfect Groups, Oxford
Mathematical Monographs, 1989.

\bibitem{BH} B. Huppert, \textquotedblleft Endliche Gruppen
I\textquotedblright , Springer-Verlag, 1967.

\bibitem{ML} Saunders MacLane, Homology. Die Grundlehren der mathematischen
Wissenschaften, Bd. 114, Springer - Berlin 1963 x+422 pp.

\bibitem{MDM} M.D. Miller, On the lattice of normal subgroups of a direct
product, Pacific J. Math. 60 no.2 (1975), 153--158.

\bibitem{RR1} R. Remak, \"{U}ber minimale invariante Untergruppen in der
Theorie der endlichen Gruppen, J. reine angew. Math. 162 (1930), 1-16.

\bibitem{RR2} R. Remak, \"{U}ber die Darstellung der endlichen Gruppen als
Untergruppen direkter Produkte, J. reine angew. Math. 163 (1930), 1-44.

\bibitem{DR} Derek J.S. Robinson, A Course in the Theory of Groups, Springer
Verlag, Graduate Texts in Mathematics, Vol. 80, 1995.

\bibitem{JR} John S. Rose, A Course on Group Theory, Cambridge University
Press, 1978

\bibitem{WS} W.R. Scott, Group Theory, Prentice-Hall, 1964

\bibitem{SUZ} M. Suzuki, Structure of a Group and the Structure of its
Lattice of Subgroups, Springer-Verlag, 1967.

\bibitem{JT} J. Th\'{e}venaz, Maximal subgroups of direct products, Journal
of Algebra, vol. 198, num. 2, p. 352-361, 1997.
\end{thebibliography}
\end{document}